\theoremstyle{plain}
\newtheorem{theorem}{Theorem}[section]
\newtheorem*{theorem*}{Theorem}
\newtheorem{proposition}[theorem]{Proposition}
\newtheorem*{proposition*}{Proposition}
\newtheorem{lemma}[theorem]{Lemma}
\newtheorem{corollary}[theorem]{Corollary}
\newtheorem{conjecture}[theorem]{Conjecture}
\theoremstyle{remark}
\newenvironment{remark}
  {\pushQED{\qed}\remarkx}
  {\popQED\endremarkx}
\definecolor{brightRed}{rgb}{1,0,0}
\definecolor{brightBlue}{rgb}{0,0,1}
\definecolor{myGreen}{rgb}{0,0.7,0}
\newcommand{\tensor}[1]{{\mathfrak{#1}}}
\newcommand{\nord}[1]{:\! #1 \!:}
\DeclareMathOperator{\ad}{ad}
\DeclareMathOperator{\End}{End}
\DeclareMathOperator{\hgt}{ht}
\DeclareMathOperator{\rank}{rank}
\DeclareMathOperator{\Conn}{Conn}
\DeclareMathOperator{\Proj}{Proj}
\DeclareMathOperator{\op}{op}
\DeclareMathOperator{\Op}{Op}
\DeclareMathOperator{\MOp}{MOp}
\DeclareSymbolFont{widetriangleaccent}{OMX}{yhex}{m}{n}
\DeclareMathAccent{\widetriangle}{\mathord}{widetriangleaccent}{"E6}
\newcommand{\longhookrightarrow}{\lhook\joinrel\relbar\joinrel\rightarrow}
\newcommand{\longtwoheadrightarrow}{\relbar\joinrel\twoheadrightarrow}
\def\cent{\mathsf k}
\def\cocent{\mathsf d}
\def\a{\mathfrak{a}}
\def\b{\mathfrak{b}}
\def\c{\mathfrak{c}}
\def\g{\mathfrak{g}}
\def\lg{{{}^L\!\g}}
\def\lgp{{}^L\!\g'}
\def\ln{{}^L\n}
\def\lhn{{}^L \hat\n}
\def\lhb{{}^L \hat\b}
\def\lhg{{}^L \hat\g}
\def\lhN{{}^L \! \hat N}
\def\lN{{}^L \! N}
\def\lb{{}^L\b}
\def\lh{{}^L\h}
\def\lhB{{}^L \! \hat B}
\def\lB{{}^L \! B}
\def\lH{{}^L \! H}
\def\lG{{}^L \! G}
\newcommand{\on}{}
\newcommand{\be}{\begin{equation}}
\newcommand{\ee}{\end{equation}}
\newcommand{\nn}{\nonumber}
\def\h{\mathfrak{h}}
\def\n{\mathfrak{n}}
\def\p{\mathfrak{p}}
\def\ZZ{\mathbb{Z}}
\def\ha{\mbox{\small $\frac{1}{2}$}}
\newcommand{\SimTo}{%
\xrightarrow{\raisebox{-.3em}{\tiny $\sim$}}
}
\newcommand{\bb}[1]{[\kern-.1em[ #1 ]\kern-.1em]}
\newcommand{\lau}[1]{(\kern-.2em( #1 )\kern-.2em)}
\newcommand{\lbf}[1]{\langle\kern-.2em\langle #1 \rangle\kern-.2em\rangle}
\newcommand{\biglbf}[1]{\big\langle \kern-.25em \big\langle #1 \big\rangle \kern-.25em \big\rangle}
\newcommand{\rbf}[1]{(\kern-.2em( #1 )\kern-.2em)}
\newcommand{\bigrbf}[1]{\big( \kern-.25em \big( #1 \big) \kern-.25em \big)}
\newcommand{\wt}[1]{\widetilde #1}
\def\A{\mathcal{A}}
\def\C{\mathcal{C}}
\def\CC{\mathbb{C}}
\def\CP{\mathbb{P}^1}
\def\RR{\mathbb{R}}
\def\H{\mathcal{H}}
\def\F{\mathcal{F}}
\def\K{\mathcal{K}}
\def\P{\mathcal{P}}
\def\M{\mathcal{M}}
\def\sl{\mathfrak{sl}}
\newcommand{\Cx}{\mathbb C^\times}
\newcommand{\ru}{r}
\newcommand{\8}{{\infty}}    
\newcommand{\onto}{\twoheadrightarrow}
\newcommand{\cc}{\mathscr C}
\newcommand{\into}{\hookrightarrow}
\newcommand{\bl}{{\bullet}}    
\newcommand{\mc}{\mathcal}
\let\nc\newcommand
\newcommand{\del}{\partial}    
\newcommand{\hc}{h^\vee}
\newcommand{\reg}{\mathrm{reg}}
\let\ox\otimes
\let\wx\wedge
\nc\ket{\rangle\,}
\def\1{\tensor{1}}
\def\2{\tensor{2}}
\def\3{\tensor{3}}
\def\4{\tensor{4}}
\numberwithin{equation}{section}
\author{Sylvain Lacroix}
\author{Beno\^{\i}t Vicedo}
\author{Charles Young}
\address{Univ Lyon, Ens de Lyon, Univ Claude Bernard, CNRS, Laboratoire de Physique, F-69342 Lyon, France}
\address{Department of Mathematics, University of York, York YO10 5DD, U.K.}
\address{School of Physics, Astronomy and Mathematics, University of Hertfordshire, College Lane, Hatfield AL10 9AB, UK.}
\email{sylvain.lacroix@ens-lyon.fr}
\email{benoit.vicedo@gmail.com}  
\email{c.a.s.young@gmail.com}
\begin{document}

\title[Affine Gaudin models and hypergeometric functions on affine opers]{Affine Gaudin models and hypergeometric\\
functions on affine opers}

\input{epsf}

\begin{abstract}
We conjecture that quantum Gaudin models in affine types admit families of local higher Hamiltonians, labelled by the (countably infinite set of) exponents, whose eigenvalues are given by functions on a space of meromorphic opers associated with the Langlands dual Lie algebra. This is in direct analogy with the situation in finite types. However, in stark contrast to finite types, we prove that in affine types such functions take the form of hypergeometric integrals, over cycles of a twisted homology defined by the levels of the modules at the marked points. That result prompts the further conjecture that the Hamiltonians themselves are naturally expressed as such integrals.

We go on to describe the space of meromorphic affine opers on an arbitrary Riemann surface. We prove that it fibres over the space of meromorphic connections on the canonical line bundle $\Omega$. Each fibre is isomorphic to the direct product of the space of sections of the square of $\Omega$ with the direct product, over the exponents $j$ not equal to 1, of the twisted cohomology of the $j^{\rm th}$ tensor power of $\Omega$.
\end{abstract}

\maketitle
\setcounter{tocdepth}{1}
\tableofcontents

\section{Introduction and overview} \label{sec: intro}
Let $\g$ be any symmetrizable Kac-Moody Lie algebra. Pick a collection $z_1,\dots,z_N$ of distinct points in the complex plane. The quadratic Hamiltonians of the quantum \emph{Gaudin model} for these data are the elements  
\begin{equation} \label{quad Ham intro}
\H_i \coloneqq \sum_{\substack{j=1\\j\neq i}}^N \frac{\Xi_{ij}}{z_i-z_j} 
          ,\qquad i=1,\dots,N,
\end{equation}
of the (suitably completed) tensor product $U(\g)^{\ox N}$, where the notation $\Xi_{ij}$ means $\Xi$ acting in tensor factors $i$ and $j$. 
Here $\Xi = \sum_{\alpha} \Xi_{(\alpha)}$ is the (possibly infinite) sum over all root spaces of $\g$ of the canonical elements $\Xi_{(\alpha)}\in \g_\alpha \ox \g_{-\alpha}$ defined by the standard bilinear form on $\g$ \cite[Chapter 2]{KacBook}. The action of $\Xi$ is well-defined on tensor products of highest-weight $\g$-modules. Let $L_{\lambda}$ denote the irreducible $\g$-module of highest weight $\lambda\in \h^* = \g_0^*$, and pick a collection $\lambda_{1},\dots,\lambda_N$ of weights. Then in particular $\H_i$ are well-defined as linear maps in $\End(\bigotimes_{i=1}^N L_{\lambda_i})$. These maps commute amongst themselves. The \emph{Bethe ansatz} is a technique for finding their joint eigenvectors and eigenvalues. One constructs a vector $\psi$ called the \emph{weight function} or \emph{Schechtman-Varchenko vector}, which depends on variables called \emph{Bethe roots}. Provided these variables obey certain \emph{Bethe ansatz equations}, then  $\psi$ is a joint eigenvector of the $\H_i$, with certain explicit eigenvalues. Let us stress that this statement is known to hold for arbitrary symmetrizable Kac-Moody algebras $\g$. Indeed, it follows from results in \cite{SV,RV}, as we recall in an appendix.

In the special case where $\g$ is of finite type, much more is known. Namely, in that case the quadratic Gaudin Hamiltonians $\H_i$ belong to a commutative subalgebra $\mc B \subset U(\g)^{\otimes N}$ called the \emph{Gaudin} \cite{Fopers} or \emph{Bethe} \cite{MTV1} subalgebra. The Schechtman-Varchenko vector is a joint eigenvector for this commutative algebra $\mc B$ \cite{FFR}, and the eigenvalues are encoded as functions on a space of \emph{opers} (see below for the definition). In fact there is even a stronger result that the image of $\mc B$ in $\End(\bigotimes_{i=1}^N L_{\lambda_i})$ can be identified with the algebra of functions on a certain space of monodromy-free opers whose singularities are at the marked points $z_i$ and whose residues at these singularities are given by the highest weights $\lambda_i$ -- see \cite{MTVschubert} in type A and \cite{RybnikovProof} in all finite types. 

Now suppose $\g$ is of untwisted affine type. The Gaudin model for such $\g$ was first studied in \cite{FFsolitons} where, by drawing from the analogy with the well understood finite type case \cite{FFR, Fre95, Fopersontheprojectiveline, Fopers}, two natural questions arose:
\begin{enumerate}
\item Are there higher Gaudin Hamiltonians? \emph{i.e.} are the quadratic Hamiltonians above part of some larger commutative subalgebra of (a suitable completion of) $U(\g)^{\ox N}$, such that $\psi$ is still a common eigenvector?
\item If yes, then what parameterizes the eigenvalues of these higher Hamiltonians?
\end{enumerate}
Regarding the first question, a general procedure for constructing the non-local higher affine Gaudin Hamiltonians was outlined in \cite{FFsolitons} by analogy with the construction of non-local quantum KdV and quantum Boussinesq Hamiltonians in \cite{BLZ1, BHK}. Moreover, in the two point case with $\g = \widehat{\mathfrak{sl}}_2$, the local higher Hamiltonians were also conjectured in \cite{FFsolitons} to coincide with the integrals of motion of the coset Virasoro algebra. The second question was also addressed in \cite{FFsolitons} where it was conjectured that the joint eigenvalues of the local and non-local higher Hamiltonians in various affine Gaudin models associated with $\g$ are parametrised by opers on $\CP$ for the Langlands dual Lie algebra $\null^L \g$, with particular singularities at the marked points of the corresponding affine Gaudin model.

In this paper we elucidate the conjectures of \cite{FFsolitons} concerning the \emph{local} higher affine Gaudin Hamiltonians and their eigenvalues. Specifically, by starting from the class of meromorphic affine (Miura) opers on $\CP$ which were conjectured in \cite{FFsolitons} to describe the spectrum of affine Gaudin Hamiltonians (affine (Miura) opers have been defined previously in \cite{MR1896178, Fopersontheprojectiveline}), the main result of the paper is that:
\begin{enumerate}[(i)]
\item There is a notion of quasi-canonical form for these affine opers which is the direct generalisation of the canonical form in finite type, and yet
\item The functions on the space of affine opers turn out to be of a very different character than in the finite case. Namely, they are given by hypergeometric integrals, over cycles of a certain twisted homology defined by the levels of the modules at the marked points.
\end{enumerate}
We conjecture that these hypergeometric integrals give the eigenvalues of local higher affine Gaudin Hamiltonians. This observation in turn allows us to make a conjecture about the form of the local higher affine Gaudin Hamiltonians themselves. 

\bigskip

To explain these statements, let us begin by recalling the situation in finite types.
Consider first $\g$ of finite type of rank $\ell$. The spectrum of the Gaudin algebra for $\g$ is described by $\lg$-opers, where $\lg$ is the Langlands dual of $\g$, \emph{i.e.} the Kac-Moody algebra with transposed Cartan matrix. Let $\lg = \ln_- \oplus \lh \oplus \ln_+$ be a Cartan decomposition and $\bar p_{-1} \coloneqq \sum_{i=1}^\ell \check f_i\in \ln_-$ the corresponding principal nilpotent element ($\check f_i$ are Chevalley generators). A \emph{Miura $\lg$-oper} is a connection of the form
\begin{subequations}
\be d + \left(\bar p_{-1} + u(z)\right) dz \ee 
where $u(z)$ is a meromorphic function valued in $\lh = \h^*$. Let $\alpha_i\in \h^*$, $i=1,\dots,\ell$ be the simple roots of $\g$; they are also the simple coroots of $\lg$. For the Gaudin model with regular singularities as described above, $u(z)$ generically takes the form
\be u(z) = - \sum_{i=1}^N \frac{\lambda_i}{z-z_i} + \sum_{j=1}^m \frac{\alpha_{c(j)}}{z-w_j},\label{u} \ee
\label{mop}\end{subequations}
where $w_1,\dots, w_m$ are the $m\in \ZZ_{\geq 0}$ Bethe roots, with ``colours'' $\{c(j)\}_{j=1}^m \subset \{1,\dots,\ell\}$.
An $\lg$-\emph{oper} is a gauge equivalence class of connections of the form 
\be d + (\bar p_{-1} + b(z)) dz, \nn\ee
where $b(z)$ is a meromorphic function valued in $\lb_+ = \lh \oplus \ln_+$, under the gauge action of the unipotent subgroup ${}^L\!N = \exp(\ln_+)$. So in particular each Miura $\lg$-oper defines an underlying $\lg$-oper, namely the equivalence class to which it belongs.  It is known that each $\lg$-oper has a \emph{unique} representative of the form 
\be d + \left( \bar p_{-1} + \sum_{k\in \bar E} \bar v_k(z)\bar p_k  \right) dz. \label{canform}\ee
Here the sum is over the (finite) set\footnote{In exactly one case, that of type $D_{2n}$, $\bar E$ is a multiset.} $\bar E$ of exponents of $\lg$. For each exponent $k\in \bar E$, $\bar p_k\in \ln_+$ is a certain nonzero element of grade $k$ in the principal gradation of $\lg$. Its coefficient $\bar v_k(z)$ is a meromorphic function valued in $\CC$.
Since this representative is unique, these functions $\{\bar v_k(z)\}_{k\in \bar E}$ are good coordinates on the space of $\lg$-opers. On the underlying $\lg$-oper of the Miura $\lg$-oper in \eqref{mop}, the functions $\{\bar v_k(z)\}_{k\in \bar E}$ will generically have poles at all the poles of $u(z)$. The Bethe equations are precisely the equations needed to ensure they in fact \emph{only} have poles at the marked points $\{z_i\}_{i=1}^N$ and \emph{not} at the Bethe roots $\{w_j\}_{j=1}^m$. Suppose the Bethe equations hold. Then the Schechtman-Varchenko vector $\psi$ obeys $S_k(z) \psi = \bar v_k(z)\psi $ for all $k\in \bar E$, where $\{S_k(z)\}_{k\in \bar E}$ are certain generating functions of the Gaudin algebra.

\bigskip

Let us now turn to affine types and try to follow the steps above as closely as possible. Suppose $\g$ is an untwisted affine Kac-Moody algebra with Cartan matrix of rank $\ell$. Let $\lg$ be its Langlands dual, with Cartan decomposition $\lg =\ln_- \oplus \lh \oplus \ln_+$. Following \cite{Fopersontheprojectiveline, FFsolitons}, we define a \emph{Miura $\lg$-oper} to be a connection of the form
\be d + \left( p_{-1} + u(z) \right) dz \label{amop} \ee
where $u(z)$ is again a meromorphic function valued in $\lh=\h^*$, and where now $p_{-1} \coloneqq \sum_{i=0}^{\ell} \check f_i\in \ln_-.$
The Cartan subalgebra $\lh=\h^*$ of $\lg$ is now of dimension $\ell+2$. As a basis, we may choose the simple roots $\alpha_i$, $i=0,1,\dots, \ell$, of $\g$ (which are the simple coroots of $\lg$) together with a choice of derivation element. It is natural to choose the derivation corresponding to the principal gradation of $\lg$. So let us pick a derivation element $\rho \in \lh$ such that $[\rho, \check e_i] = \check e_i$, $[\rho,\check f_i] = -\check f_i$ for each $i=0,1,\dots,\ell$. 

By analogy with the finite case, it was conjectured in \cite{FFsolitons} that for the Gaudin model with regular singularities at the marked points $\{z_i\}_{i=1}^N$ the relevant Miura $\lg$-opers are those with $u(z)$ just as in \eqref{u} except that now the ``colours'' of the Bethe roots $\{c(j)\}_{j=1}^m \subset \{0,1,\dots,\ell\}$ can include $0$. We can write $u(z)$ in our basis as
\be u(z) = \sum_{i=0}^\ell u_i(z) \alpha_i - \frac{\varphi(z)}{h^\vee}\rho \nn\ee
where $\{u_i(z)\}_{i=0}^\ell$ and $\varphi(z)$ are meromorphic functions valued in $\CC$. (It proves convenient to include the factor of one over the Coxeter number  $h^\vee$ of $\lg$.) The function $\varphi(z)$ depends only on the levels $k_i$ of the $\g$-modules $L_{\lambda_i}$, \emph{i.e.} the values of the central element of $\g$ on these modules: 
\begin{equation} \label{twist function intro}
\varphi(z) = \sum_{i=1}^N \frac{k_i}{z-z_i}.
\end{equation}

An $\lg$-\emph{oper} is defined in \cite{Fopersontheprojectiveline, FFsolitons} to be a gauge equivalence class of connections of the form
\be d + (p_{-1} + b(z)) dz, \nn\ee
where $b(z)$ is a meromorphic function valued in $\lb_+ = \lh \oplus \ln_+$, under the gauge action of the subgroup ${}^L\!N_+ = \exp(\ln_+)$. This subgroup is no longer unipotent, but it is still easy to make sense of gauge transformations grade-by-grade in the principal gradation. See \S\ref{sec: def oper} below. In this way we shall show that each $\lg$-oper has a representative of the form
\be d + \Bigg( p_{-1} - \frac{\varphi(z)}{h^\vee} \rho + \sum_{r\in E} v_r(z) p_r  \Bigg) dz. \label{acf}\ee
Here $E$ denotes the set of positive exponents of $\lg$, which is now an infinite set\footnote{once again, multiset, in the case of type $\null^1 D_{2n}$.}. For each $r\in \pm E$, $p_r$ is a certain nonzero element of grade $r$ in the principal gradation of $\lg$, and its coefficient $v_r(z)$ is a meromorphic function valued in $\CC$. 
In particular, the underlying $\lg$-oper of the Miura $\lg$-oper in \eqref{amop} has a representative of this form. 

\emph{However}, in stark contrast to the case of finite type algebras above, the representative \eqref{acf} is not unique, because there is a residual gauge freedom. This freedom is generated by gauge transformations of the form $\exp( \sum_{r\in E_{\geq 2}} g_r(z) p_r )$, where $g_r(z)$ are meromorphic functions valued in $\CC$ and $E_{\geq 2}$ is the set of positive exponents of $\lg$ excluding $1$. Such a transformation preserves the form of the connection \eqref{acf} and the function $\varphi(z)$ while sending, for each\footnote{There is a subtlety for $r=1$; see Corollary \ref{cor: v1} below.} $r\in E_{\geq 2}$,
\be v_r(z) \longmapsto v_r(z) - g'_r(z) + \frac{r \varphi(z)}{h^\vee} g_r(z). \label{vr}\ee 
Consequently, these $v_r(z)$ are not themselves well-defined functions on the space of $\lg$-opers, and one should not expect them to parameterize eigenvalues of Gaudin Hamiltonians. Rather, one should take appropriate integrals of them. Indeed, consider the multivalued (for generic $k_i$) function on $\CC \setminus\{z_1,\dots,z_N\}$ defined as
\begin{equation*}
\mc P(z) \coloneqq \prod_{i=1}^N (z-z_i)^{k_i}.
\end{equation*}
If we multiply $v_r(z)$ by $\mc P(z)^{-r/h^\vee}$ then its transformation property \eqref{vr} can equivalently be written as
\begin{equation*}
\mc P(z)^{-r/h^\vee} v_r(z) \longmapsto \mc P(z)^{-r/h^\vee} v_r(z) - \partial_z \big( \mc P(z)^{-r/h^\vee} g_r(z) \big).
\end{equation*}
We now see that in order to get gauge-invariant quantities we should consider integrals
\begin{equation} \label{ci}
I^\gamma_r \coloneqq \int_\gamma \mc P(z)^{-r/h^\vee} v_r(z) dz
\end{equation}
over any cycle $\gamma$ along which $\mc P^{-r/h^\vee}$ has a single-valued branch. The prototypical example of such a cycle is a Pochhammer contour, drawn below around two distinct points $z_i$ and $z_j$, $i, j = 1, \ldots, N$:
\begin{center}
\begin{tikzpicture}[scale=.6]
\filldraw (0,0) node [below right=-.5mm]{\scriptsize $z_i$} circle (2pt);
\filldraw (4,0) node [below right=-.5mm]{\scriptsize $z_j$} circle (2pt);
\draw[-stealth', postaction={decorate,decoration={markings,mark=at position .4 with {\arrow{stealth'}}}}] (2,0) .. controls (-2,-2) and (-2,1.25) .. (2,1.25) node[above]{$\gamma$};
\draw[postaction={decorate,decoration={markings,mark=at position .8 with {\arrow{stealth'}}}}] (2,1.25) .. controls (6,1.25) and (6,-2) .. (2,0);
\draw[postaction={decorate,decoration={markings,mark=at position .8 with {\arrow{stealth'}}}}] (2,0) .. controls (-2,2) and (-3,-1.5) .. (2,-1.5);
\draw (2,-1.5) .. controls (7,-1.5) and (6,2) .. (2,0);
\end{tikzpicture}
\end{center}
Another way of formulating the above, described in more detail in \S\ref{sec: twisted homology}, is to note that the transformation property \eqref{vr} says that the $1$-form $v_r(z) dz$ is really an element of some suitably defined twisted cohomology, and \eqref{ci} represents its integral over the class of a cycle $\gamma$ in the dual twisted homology. (For an introduction to twisted homology and local systems see \emph{e.g.} \cite{EFK}. Note, though, that the local system underlying the twisted homology described above is conceptually distinct from the ``usual'' local system associated to Gaudin models, namely the local system defined by the KZ connection.) 

Next one should ask about the role of the Bethe equations. Consider the underlying $\lg$-oper of the Miura $\lg$-oper \eqref{amop} with $u(z)$ as in \eqref{u}. We shall show that the Bethe equations are precisely the equations needed to ensure that there exists a choice of gauge in which the functions $v_r(z)$ only have poles at the marked points $\{z_i\}_{i=1}^n$ and not at the Bethe roots $\{w_j\}_{j=1}^m$. The Bethe equations thus ensure that the integrands $\mc P(z)^{-r/h^\vee} v_r(z) dz$ in \eqref{ci} have no residues at the Bethe roots. Thus, in particular, if the Bethe equations hold then the integrals \eqref{ci} do not depend on the position of the chosen contour $\gamma$ relative to these Bethe roots.

The form of the functions \eqref{ci} on the space of $\lg$-opers leads us to conjecture the existence of a collection $\{ \mathcal S_r(z) \}_{r \in E}$ of meromorphic functions valued in (a suitable completion of) $U(\g)^{\otimes N}$, whose properties are listed in Conjecture \ref{conj: higher Ham}. These ensure, in particular, that the corresponding integrals
\begin{equation} \label{op ci}
\hat Q^\gamma_r \coloneqq \int_\gamma \mc P(z)^{-r/h^\vee} \mathcal S_r(z) dz
\end{equation}
mutually commute in the quotient of (the completion of) $U(\g)^{\otimes N}$ in which the central elements act by the levels $k_i$. Moreover, we conjecture that the Schechtman-Varchenko vector $\psi$ is a simultaneous eigenvector of the $\hat Q^\gamma_r$ with eigenvalues given by \eqref{ci}. That is, $\hat Q^\gamma_r \psi = I^\gamma_r \psi$ for any choice of contour $\gamma$ as above and any $r \in E$.

A first non-trivial check of these conjectures is to show that in the case $\g' = \widehat{\mathfrak{sl}}_M$, for $M \geq 3$, there are commuting \emph{cubic} Hamiltonians fitting this pattern. In a forthcoming paper \cite{LVY2}, we explicitly construct such cubic Hamiltonians and prove that they commute; we also check that $\psi$ is an eigenvector with the expected eigenvalues as in \eqref{ci}, at least for $0$ and $1$ Bethe roots.

Our conjecture on the general form of the ``local'' higher affine Gaudin Hamiltonians in \eqref{op ci} is motivated by the recent construction of local integrals of motion in \emph{classical} affine Gaudin models. Specifically, it was shown in \cite{V17} that classical affine Gaudin models provide a unifying framework for describing a broad family of classical integrable field theories. One of the defining features of such theories is that the Poisson bracket of their Lax matrix is characterised by a certain rational function, called the \emph{twist function} $\varphi(z)$. We restrict attention in this article to those with twist function of the form \eqref{twist function intro}. It was subsequently shown in \cite{LMV17}, in the case when $\g$ is the untwisted affine Kac-Moody algebra associated with a semisimple Lie algebra of classical type, how to associate an infinite set $\{ Q^x_r \}_{r \in E}$ of local integrals of motion in such a theory to each zero $x$ of the twist function. These local charges were obtained by generalising the original procedure of \cite{Evans:1999mj} for classical principal chiral models  on compact Lie groups of classical type, which had later also been extended to various other classical integrable field theories in \cite{Evans:2000hx,Evans:2000qx,Evans:2005zd}, see also \cite{Evans:2001sz}.
As we argue in \S\ref{sec: classical lim}, the integral over the contour $\gamma$ in \eqref{op ci} localises in the classical limit to critical points of the function $\P(z)$, in other words to zeroes of the twist function $\varphi(z)$. In this sense, the operators \eqref{op ci} provide natural quantisations of the local integrals of motion $Q^x_r$ in the classical affine Gaudin model.

Let us finally note that the appearance of hypergeometric integrals, as in \eqref{ci}, is very suggestive in relation to recent work on the massive ODE/IM correspondence for the Fateev model \cite{Lukyanov:2013wra, Bazhanov:2013cua}.

\bigskip

The paper is organised as follows.

In \S\ref{sec: affine algebra}, to set the notation we recall the definition of an affine Kac-Moody algebra $\g$ and its Langlands dual $\lg$, focusing on the latter for the purpose of this paper. In particular, we recall the definition and main properties of its principal subalgebra.

In \S\ref{sec: opers} we introduce, following \cite{Fopersontheprojectiveline, FFsolitons}, the space of meromorphic $\lg$-opers on $\CP$, working in a fixed global coordinate on $\CC \subset \CP$. The main result of this section is Theorem \ref{thm: quasi-canonical form} which describes the quasi-canonical form of an $\lg$-oper $[\nabla]$. This allows us to describe gauge invariant functions on the space of $\lg$-opers as hypergeometric integrals of the form \eqref{ci} in Corollary \ref{cor: opint}.

In \S\ref{sec: Miura opers} we introduce the relevant class of Miura $\lg$-opers, following \cite{FFsolitons}, with simple poles at the marked points $z_i$, $i = 1, \ldots, N$ with residues $\lambda_i \in \h^\ast$, and additional simple poles at the Bethe roots $w_j$, $j = 1, \ldots, m$. The $\lg$-oper $[\nabla]$ underlying such a Miura $\lg$-oper $\nabla$ is shown to be regular at each of the Bethe roots $w_j$ if and only if the Bethe equations hold. Moreover, we show that the eigenvalues of the quadratic Gaudin Hamiltonians \eqref{quad Ham intro} on the tensor product $\bigotimes_{i=1}^N L_{\lambda_i}$ appear as the residues at the $z_i$ in the coefficient of $p_1$ in any quasi-canonical form of the $\lg$-oper $[\nabla]$.

Based on the description of functions on the space of $\lg$-opers from Corollary \ref{cor: opint}, in \S\ref{sec: main conj} we formulate our main conjecture about the form of the higher Gaudin Hamiltonians of an affine Gaudin model associated with the affine Kac-Moody algebra $\g$. See Conjecture \ref{conj: higher Ham}.

In \S\ref{sec: coord} we give a coordinate-independent definition of meromorphic $\lg$-opers on an arbitrary Riemann surface $\Sigma$. In particular, we compare and contrast the description of the space of $\lg$-opers in the cases when $\lg$ is of finite and affine type.

Specialising the discussion of \S\ref{sec: coord} to the case $\Sigma = \CP$, \S\ref{sec: twisted homology} is devoted to a coordinate-independent description of the functions on the space of $\lg$-opers from Corollary \ref{cor: opint}.

In \S\ref{sec: discussion} we discuss various connections between the present work and the literature. In particular, we compare our main Theorem \ref{thm: quasi-canonical form} with the procedure of Drinfel'd and Sokolov \cite{DS} for constructing classical integrals of motion of generalised (m)KdV. We also mention connections with the (massive) ODE/IM correspondence. We provide motivation for Conjecture \ref{conj: higher Ham} by relating the form of the classical limit of the higher Gaudin Hamiltonians with the existing hierarchy of classical integrals of motion in classical affine Gaudin models.

Finally, in appendix \ref{sec: hyp arr} we briefly review the work of Schechtmann and Varchenko \cite{SV} on the diagonalisation of the quadratic Gaudin Hamiltonians for an arbitrary Kac-Moody algebra $\g$ by the Bethe ansatz.

\subsubsection*{Acknowledgements}
CY is grateful to E. Mukhin for interesting discussions. 
The authors thank M. Magro for interesting discussions. 
This work is partially supported by the French Agence Nationale de la Recherche (ANR) under grant ANR-15-CE31-0006 DefIS.

\section{The affine algebra $\lg$} \label{sec: affine algebra}

\subsection{Cartan data and defining relations} \label{sec: Cartan data}
Let $\g \coloneqq \g(A)$ be an untwisted affine Kac-Moody algebra with indecomposable Cartan matrix $A \coloneqq (A_{ij})_{i,j=0}^\ell$, and let $\lg \coloneqq  \g({}^t\!A)$ be its Langlands dual, namely the affine Kac-Moody algebra associated with the transposed Cartan matrix. 
We have the Cartan decomposition 
\be \g = \n_- \oplus \h \oplus \n_+\nn\ee 
where $\h$ is a complex vector space of dimension $\dim \h = \ell + 2$. The sets of simple roots $\{ \alpha_i \}_{i=0}^\ell$ and simple coroots $\{ \check \alpha_i \}_{i=0}^\ell$ of $\g$ are by definition linearly independent subsets of $\h^\ast$ and $\h$, respectively, such that $A_{ij} = \langle\alpha_j, \check\alpha_i\rangle$ for $i, j \in I \coloneqq \{ 0,\ldots, \ell \}$. Here $\langle\cdot,\cdot\rangle: \h^\ast \times \h \to \CC$ denotes the canonical pairing.
In the Cartan decomposition of $\lg$, 
\be \lg = \ln_- \oplus \lh \oplus \ln_+,\nn\ee 
we may identify $\lh = \h^*$. Then $\{ \alpha_i \}_{i=0}^\ell$ is a set of simple \emph{coroots} of $\lg$, and $\{ \check\alpha_i \}_{i=0}^\ell$ a set of simple \emph{roots} of $\lg$. 
In terms of the Chevalley generators $\check e_i$, $i \in I$, of $\ln_+$ and $\check f_i$, $i \in I$, of $\ln_-$, the defining relations of $\lg$ are given by
\begin{subequations} \label{KM relations}
\begin{alignat}{2}
\label{KM rel a} [x, \check e_i] &= \langle x,\check \alpha_i\rangle  \check e_i, &\qquad
[x, \check f_i] &= - \langle x,\check\alpha_i\rangle \check f_i, \\
\label{KM rel b} [x, x'] &= 0, &\qquad
[\check e_i, \check f_j] &= \alpha_i \delta_{i,j},
\end{alignat}
for any $x, x' \in \lh$, together with the Serre relations
\begin{equation} \label{KM rel c}
(\text{ad}\, \check e_i)^{1- A_{ji}} \check e_j = 0, \qquad (\text{ad}\, \check f_i)^{1- A_{ji}} \check f_j = 0.
\end{equation}
\end{subequations}

\begin{remark}
We shall be mostly concerned with the Lie algebra $\lg$ rather than $\g$. Nevertheless, since we have in mind applications to the Gaudin model for $\g$, we prefer to keep the notation adapted to $\g$, at the cost of the somewhat non-standard appearance of these relations \eqref{KM relations} and others below.
\end{remark}

Let $a_i$ (resp. $\check a_i$), $i\in I$, be the unique positive relatively prime integers such that $A\, \null^t \!(a_0, \ldots, a_\ell) = 0$ (resp. $\null^t\!A\,\null^t\!(\check a_0, \ldots, \check a_\ell) =0$).
Define
\be h\coloneqq \sum_{i=0}^\ell  a_i\qquad \hc \coloneqq \sum_{i=0}^\ell \check a_i.\nn\ee 
Then $h$ is the Coxeter number of $\g$ (and the dual Coxeter number of $\lg$) while $\hc$ is the Coxeter number of $\lg$ (and the dual Coxeter number of $\g$). Define also
\be \delta \coloneqq \sum_{i=0}^\ell a_i \alpha_i,\qquad \cent \coloneqq \sum_{i=0}^\ell \check a_i \check \alpha_i. \nn\ee
Then $\delta$ spans the centre of $\lg$ while $\cent$ spans the centre of $\g$. Denote by $\g' = [\g,\g]$ and $\lgp\coloneqq [\lg,\lg]$ the derived subalgebras of $\g$ and $\lg$, respectively.

We shall suppose that $\check a_0 = 1$ and $a_0 = 1$.
\begin{remark} \label{rem: not A2l}
One has $\check a_0 = 1$ and $a_0 = 1$ for all affine Kac-Moody algebras except for type $\null^2\!A_{2k}$. In type $\null^2\!A_{2k}$ one can choose to take either $\check a_0 = 1$ and $a_0 = 2$ or vice versa $\check a_0 = 2$ and $a_0 = 1$.
The Cartan matrices in these two descriptions are transposes of one another so that in this case $\lg$ and $\g$ are both twisted, of type $\null^2\!A_{2k}$. Since we have in mind applications to the Gaudin model for an untwisted affine Kac-Moody algebra $\g$, we shall not consider this case. 
\end{remark}

Recall that given any $d\in \h$ such that $\langle \delta, d\rangle \neq 0$, $\{\check\alpha_i\}_{i=0}^\ell \cup \{d\}$ forms a basis of $\h$; and similarly, given any $\Lambda\in \lh$ such that $\langle \Lambda, \cent\rangle \neq 0$, $\{\alpha_i\}_{i=0}^\ell \cup \{ \Lambda\}$ provides a basis for $\lh$. We call such elements $d$ and $\Lambda$ \emph{derivation elements} of $\h$ and $\lh$, respectively.

Let $\cocent\in \h$ be a derivation element of $\g$ such that
\begin{equation*}
\langle \alpha_i , \cocent  \rangle = \delta_{i,0},\qquad i\in I.
\end{equation*}
Such a $\cocent$ is unique up to the addition of a multiple of $\cent$. Having made such a choice we define a non-degenerate symmetric bilinear form $(\cdot | \cdot) : \h \times \h \to \CC$ on $\h$ by
\begin{equation} \label{bilinear form def}
(\check\alpha_i | x) =  a_i \check a_i^{-1} \langle \alpha_i, x \rangle, \qquad
(\cocent | \cocent) = 0
\end{equation}
for any $i \in I$ and $x \in \h$. It extends uniquely to an invariant symmetric bilinear form on the whole of $\g$ \cite[Proposition 2.2]{KacBook}, which we also denote $(\cdot | \cdot) : \g \times \g \to \CC$. It also induces a linear isomorphism $\nu: \h\SimTo \lh$, and hence we have a non-degenerate symmetric bilinear form $(\nu^{-1}(\cdot) |\nu^{-1}(\cdot) ): \lh \times \lh \to \CC$ on $\lh$, which henceforth we shall also denote by $(\cdot|\cdot)$. The latter then extends uniquely to an invariant symmetric bilinear form  $(\cdot | \cdot) : \lg \times \lg \to \CC$ on the whole of $\lg$.

There exists a unique set $\{\Lambda_i\}_{i=0}^\ell \subset \lh$ of derivation elements of $\lh$, the fundamental coweights of $\lg$ (and the fundamental weights of $\g$) relative to our choice of $\cocent$, such that 
\begin{equation}
\langle \Lambda_i, \cocent \rangle = 0\qquad\text{and}\qquad
\langle \Lambda_i, \check\alpha_j \rangle = \delta_{i,j},\qquad i,j\in I.\label{def: Lambda}
\end{equation}
Likewise, there exists a unique set $\{ \check\Lambda_i \}_{i=0}^\ell \subset \h$ of derivation elements of $\h$, the fundamental coweights of $\g$ (and the fundamental weights of $\lg$) such that
\begin{equation}
\langle \Lambda_0, \check\Lambda_i \rangle = 0\qquad\text{and}\qquad
\langle \alpha_i, \check\Lambda_j \rangle = \delta_{i,j},\qquad i,j\in I.\label{def: co Lambda}
\end{equation}
In particular, we have $\check\Lambda_0 = \cocent$.

\subsection{Principal gradation} \label{sec: principal grad}

Let $\check Q \coloneqq \bigoplus_{i=0}^\ell \ZZ \check \alpha_i$ be the root lattice of $\lg$. We have the root space decomposition
\be\lg = \bigoplus_{\check\alpha\in \check Q} \lg_{\check\alpha}, \nn\ee
where $\lg_{\check\alpha} \coloneqq \{ x \in \lg \,|\, [h, x] = \langle h,\check\alpha\rangle x \;\text{for all} \; h \in \lh\}$. In particular, for the origin of the root lattice $0 \in \check Q$ we have $\lg_0 = \lh$.
The \emph{height} of a root $\check\alpha = \sum_{i=0}^\ell r_i \check\alpha_i \in \check Q$ is $\hgt(\check \alpha) \coloneqq \sum_{i=0}^\ell r_i$. The \emph{principal gradation} of $\lg$ is the $\ZZ$-gradation defined by 
\begin{equation*}
\lg = \bigoplus_{n \in \ZZ} \lg_n,\qquad \lg_n \coloneqq \bigoplus_{\substack{\check\alpha\in\check Q\\ \hgt (\check\alpha) = n}} \lg_{\check \alpha}.
\end{equation*}
Equivalently, the principal gradation is the $\ZZ$-gradation defined by 
\be \deg(\check e_i) = 1,\qquad \deg(\check f_i) = -1, \qquad i\in I,\nn\ee 
and $\deg(\lh) = 0$. 
In particular $\lg_{0} = \lh$, so that the notation $\lg_0$, where the subscript $0$ could stand for either $0 \in \check Q$ or $0 \in \ZZ$, is unambiguous.

Let $\rho \in \lh$ be the unique derivation element of $\lh$ such that
\begin{equation*}
\langle \rho, \check \alpha_i \rangle = 1, \qquad (\rho | \rho) = 0,
\end{equation*}
for every $i \in I$. By the first property we have $\langle \rho, \cent \rangle = h^\vee$.
The $\ad$-eigenspaces of $\rho$ are the subspaces $\lg_n$, $n\in \ZZ$. Indeed, we have
\begin{equation*}
[\rho, \check e_i] = \check e_i,\qquad [\rho,\check f_i] = -\check f_i, \qquad i\in I.
\end{equation*}

\subsection{Principal subalgebra and exponents}\label{sec: princ sub}
\def\lbg{\mathcal L}
Define $p_{-1}$, the \emph{cyclic element} of $\lg$, as
\begin{equation}
p_{-1} \coloneqq \sum_{i=0}^\ell \check f_i.\label{def: pm1}
\end{equation}
It belongs to the $(-1)^{\rm st}$-grade of the derived subalgebra $\lgp=[\lg,\lg]$.
There is a realization of $\lgp$ as the central extension of a certain twisted loop algebra $\lbg$, in such a way that the power of the formal loop variable $t$ measures the grade in the principal gradation. (Equivalently, the derivation element $\rho\in \lg$ is realized as $t \del_t$.) By studying this realization, one establishes some important facts about the adjoint action of $p_{-1}$. Here we shall merely recall these facts; for more details see \cite[Chapter 14]{KacBook}.
Let 
\be \pi : \lgp \longrightarrow \lbg \cong \lgp/\CC\delta\nn\ee 
be the canonical projection. The twisted loop algebra $\lbg$ is the direct sum of the image and the kernel of the adjoint action of $\pi(p_{-1})$:
\begin{subequations}\label{Ld}
\be\lbg = \ker(\ad_{\pi(p_{-1})}) \oplus \textup{im}(\ad_{\pi(p_{-1})}),\ee
and this decomposition respects the principal gradation, \emph{i.e.} for each $n\in \ZZ$,
\be \lbg_n = \ker(\ad_{\pi(p_{-1})})_n \oplus \textup{im}(\ad_{\pi(p_{-1})})_n, \ee
\end{subequations}
where $\lbg_n \coloneqq \pi(\lgp_n)$.

The graded subspaces $\textup{im}(\ad_{\pi(p_{-1})})_n$ are all of dimension $\ell$ and moreover
\begin{equation*}
\ad_{\pi(p_{-1})} : \textup{im}(\ad_{\pi(p_{-1})})_n \overset{\sim}\longrightarrow \textup{im}(\ad_{\pi(p_{-1})})_{n-1}
\end{equation*}
is a linear isomorphism for each $n$.
The graded subspaces $\ker(\ad_{\pi(p_{-1})})_n$ have dimensions encoded by the exponents. Indeed, the multiset of \emph{exponents} of $\lg$ is by definition the multiset consisting of each integer $n$ with multiplicity $\dim(\ker(\ad_{\pi(p_{-1})})_n)$. One has  $\dim(\ker(\ad_{\pi(p_{-1})})_n)=  \dim(\ker(\ad_{\pi(p_{-1})})_{-n})$ and $\dim(\ker(\ad_{\pi(p_{-1})})_0)=0$. So the multiset of exponents is of the form $\pm E$, where we denote by $E$ the multiset of strictly positive exponents. The kernel  $\ker(\ad_{\pi(p_{-1})})$ forms an abelian Lie subalgebra of the twisted loop algebra $\lbg$, called the \emph{principal subalgebra}.

We need the ``lift'' to $\lg$ of the decomposition \eqref{Ld}. For each $n\in \ZZ_{\neq 0}$, we have $\lg_n = \lgp_n$, the map $\pi|_{\lgp_n} : \lgp_n \xrightarrow\sim \lbg_n$ is a linear isomorphism, and one defines 
\be \a_n \coloneqq (\pi|_{\lgp_n})^{-1}\left( \ker(\ad_{\pi(p_{-1})})_n\right),\qquad 
 \c_n \coloneqq (\pi|_{\lgp_n})^{-1}\left( \textup{im}(\ad_{\pi(p_{-1})})_n\right).\nn\ee
Meanwhile the subspaces $\a_0$ and $\c_0$ of $\lg_0=\lh$ are defined as
\be 
\a_0 \coloneqq \CC\delta \oplus \CC\rho,\qquad 
   \c_0 \coloneqq \ad_{p_{-1}}( \c_{1} )
 .\nn\ee
Then for each $n\in \ZZ$ we have the direct sum decomposition
\begin{equation} \label{lg n decompA}
\lg_n = \a_n \oplus \c_n.
\end{equation}
Let $\a = \bigoplus_{n\in \ZZ} \a_n$ and $\c = \bigoplus_{n\in \ZZ} \c_n$, so $\lg = \a \oplus \c$. 
One has $\dim(\c_n)=\ell$ for each $n\in \ZZ$ and the linear map 
\begin{equation*}
\ad_{p_{-1}} : \c_{n} \overset{\sim}\longrightarrow \c_{n-1}
\end{equation*} 
is an isomorphism for every $n\in \ZZ$. 

The subspace $\a$ is a Lie subalgebra, the \emph{principal subalgebra of $\lg$}. It is the central extension, by a one-dimensional centre $\CC\delta$, of the principal subalgebra $\textup{im}(\ad_{\pi(p_{-1})})$ of $\lbg$, equipped with a derivation element $\rho$. Indeed, we may pick a basis $\{p_n\}_{n\in \pm E} \cup \{ \delta, \rho\}$ of $\a$ where for each exponent $n\in \pm E$, $p_n\in \a_n$. 
This basis can be so chosen that the non-trivial Lie algebra relations of $\a$ are given by
\begin{equation} \label{Lie alg a com rel}
[p_m, p_n] = m \delta_{m+n,0} \, \delta, \qquad [\rho, p_n] = n\, p_n, \qquad m, n \in \pm E.
\end{equation}
The restriction to $\a$ of the bilinear form $(\cdot | \cdot)$ on $\lg$ is non-degenerate, with the non-trivial pairings given by
\begin{equation*}
(\delta | \rho) = (\rho | \delta) = h^\vee, \qquad (p_m | p_n) = h^\vee \delta_{m+n,0}, \qquad m, n \in \pm E.
\end{equation*}

\begin{remark}\label{rem: pirem}$ $
\begin{enumerate}[(a)]
\item $\pm 1$ are always exponents with multiplicity 1. We keep $p_{-1}$ as in \eqref{def: pm1} and set 
\be p_1 = \sum_{i=0}^\ell a_i \check e_i.\nn\ee
\item The pattern of exponents is periodic with period $rh^\vee$, where ${}^r\!X_{\mathsf N}$ is the type of $\lg$ in Kac's notation. For a table of the patterns of exponents in all types see e.g. \cite[Chapter 14]{KacBook} or \cite[\S 5]{DS}.
\item The exponents of $\g$ and $\lg$ are the same \cite[Corollary 14.3]{KacBook}, which is important for Conjecture \ref{conj: higher Ham} below. (Consequently, if  ${}^sY_{\mathsf M}$ is the type of  $\g$ then the pattern of exponents is also periodic with period $s h$, which need not equal $rh^\vee$. For us $s=1$ since $\g$ is untwisted.)
\item In all types except ${}^1\!D_{2k}$, the multiset $E$ of positive exponents is actually a set, \emph{i.e.} $\dim(\a_n)\in \{0,1\}$ for all $n\in \ZZ_{\neq 0}$. In such cases, for each $j\in E$ the basis element $p_j\in \a_j$ is unique up to rescaling. Exceptionally, in type $\null^1 \!D_{2k}$ one has $\dim(\a_{2k-1 + (4k-2) n})=2$ for every $n\in\ZZ$. For each $n \geq 0$ one must therefore pick two basis vectors, each one labelled by one of the two distinct copies of $2k-1 + (4k-2) n$ in $E$.\label{rem: p freedom} (The basis vectors for $n\leq 0$ are then fixed by the form of the bilinear form above.)
\item
The action of the $\CC$-linear map $\ad_{p_{-1}} : \lg \to \lg$ on the subspaces $\lg_n$, $n \in \ZZ$, of the principal gradation of $\lg$ can be summarised in the following diagram
\begin{equation*}
\begin{tikzpicture}[bij/.style={above,sloped,inner sep=0.6pt}]
\matrix (m) [matrix of math nodes, row sep=.8em, column sep=2.5em,text height=1.5ex, text depth=0.25ex]    
{
& & & \CC \rho & \a_{-1} & \a_{-2} & \cdots\\
\cdots & \c_2 & \c_1 & \c_0 & \c_{-1} & \c_{-2} & \cdots\\
\cdots & \a_2 & \a_1 & \CC \delta & & & \\
};
\path[->] (m-2-1) edge node[bij]{$\sim$} (m-2-2);
\path[->] (m-2-2) edge node[bij]{$\sim$} (m-2-3);
\path[->] (m-2-3) edge node[bij]{$\sim$} (m-2-4);
\path[->] (m-3-3) edge (m-3-4);
\path[->] (m-2-4) edge node[bij]{$\sim$} (m-2-5);
\path[->] (m-1-4) edge (m-1-5);l
\path[->] (m-2-5) edge node[bij]{$\sim$} (m-2-6);
\path[->] (m-2-6) edge node[bij]{$\sim$} (m-2-7);
\node at ($1/2*(m-1-4)+ 1/2*(m-2-4)$) {$\oplus$};
\node at ($1/2*(m-2-2)+ 1/2*(m-3-2)$) {$\oplus$};
\node at ($1/2*(m-2-3)+ 1/2*(m-3-3)$) {$\oplus$};
\node at ($1/2*(m-2-4)+ 1/2*(m-3-4)$) {$\oplus$};
\node at ($1/2*(m-1-5)+ 1/2*(m-2-5)$) {$\oplus$};
\node at ($1/2*(m-1-6)+ 1/2*(m-2-6)$) {$\oplus$};
\end{tikzpicture}
\end{equation*}
where each column corresponds to a subspace $\lg_n$ decomposed as in \eqref{lg n decompA}.
\item \label{rem: Bn vs im p-1}
Recall the decomposition \eqref{Ld} of the subquotient $\lbg$. In fact $\c_n = \textup{im}(\ad_{p_{-1}})_n$ and $\a_n = \ker(\ad_{p_{-1}})_n$ for every $n\in \ZZ$, with precisely the following exceptions: $\c_0 \neq ( \textup{im} \ad_{p_{-1}} )_0$ and $\c_{-1} \neq ( \textup{im} \ad_{p_{-1}} )_{-1}$ since $\delta = [p_1, p_{-1}]$ and $p_{-1} = [p_{-1}, \rho]$ both belong to the image of $\ad_{p_{-1}} : \lg \to \lg$; and $\a_1 \neq ( \ker \ad_{p_{-1}} )_1$ since $\ad_{p_{-1}} : \lg_1 \hookrightarrow \lg_0$ is injective. \qedhere
\end{enumerate}
\end{remark}

\section{$\lg$-opers and quasi-canonical form} \label{sec: opers}

\subsection{Inverse limits} \label{sec: inverse limits}
Recall the subalgebras $\lh$ and $\ln_+$ of $\lg$ from \S\ref{sec: Cartan data}. We introduce also the Borel subalgebra $\lb_+ \coloneqq \lh \oplus \ln_+ \subset \lg$. These can be described in terms of the principal gradation of $\lg$ as $\ln_+ = \bigoplus_{n > 0} \lg_n$ and $\lb_+ = \bigoplus_{n \geq 0} \lg_n$.
Moreover, there is a natural descending $\ZZ_{> 0}$-filtration on $\ln_+$ (and $\lb_+$) by Lie ideals
\begin{equation*}
\ln_k = \bigoplus_{n \geq k} \lg_n, \qquad k \in \ZZ_{> 0}.
\end{equation*}
Since $\ln_k \subset \lgp$ for each $k \in \ZZ_{> 0}$, these ideals also define a descending $\ZZ_{>0}$-filtration on the derived subalgebra $\lb'_+ \coloneqq \lb_+ \cap \lgp$.

Let $\M$ be the field of meromorphic functions on $\CP \coloneqq \CC \cup \{ \infty \}$. For any Lie subalgebra $\p \subset \lg$ we introduce the Lie algebra $\p(\M) \coloneqq \p \otimes \M$ of $\p$-valued meromorphic functions on $\CP$.

The Lie algebras $\ln_k(\M)$, $k \in \ZZ_{> 0}$ endow $\ln_+(\M)$ with a descending $\ZZ_{>0}$-filtration by ideals such that the quotient Lie algebras $\ln_+(\M) / \ln_k(\M)$, $k \in \ZZ_{>0}$ are nilpotent. Consider the Lie algebra defined as the inverse limit
\begin{equation*}
\lhn_+(\M) \coloneqq \varprojlim \ln_+(\M) / \ln_k(\M).
\end{equation*}
By definition, its elements are infinite sums $\sum_{n > 0} y_n$, with $y_n \in \lg_n (\M)$, which truncate to finite sums when working in the quotient $\ln_+(\M) / \ln_k(\M)$ for any $k \in \ZZ_{> 0}$. 

\begin{remark}
It should be stressed that for a given element $\sum_{n > 0} y_n$ of $\lhn_+(\M)$, the orders of the poles of the $\lg_n$-valued meromorphic functions $y_n$ are allowed to increase without bound as $n$ increases. Thus $\lhn_+(\M)$ is strictly larger than $\lhn_+ \ox \M$, where $\lhn_+ \coloneqq \varprojlim \ln_+ /\ln_k$ is the completion of $\ln_+$.
\end{remark}

We also have the inverse limits
\begin{alignat*}{3} \lhb_+(\M) &\coloneqq &\lh(\M) &\oplus \lhn_+(\M) &&= \varprojlim \lb(\M) / \ln_k(\M),\\
\lhg(\M) &\coloneqq \ln_-(\M) \oplus {}&\lh(\M) &\oplus \lhn_+(\M) &&= \varprojlim \lg(\M) / \ln_k(\M).
\end{alignat*}
The latter is an inverse limit of vector spaces only, since the $\ln_k(\M)$ are not Lie ideals in $\lg(\M)$. Nonetheless, $\lhg(\M)$ is a Lie algebra, with $\lg(\M)$ as a subalgebra.\footnote{Given any two elements $x= \sum_{n>-N} x_n$, $y=\sum_{n>-M} y_n$ of the vector space $\lhg(\M)$, with each $x_n,y_n\in \lg_n(\M)$, their Lie bracket $[x,y] = \sum_{k>-N-M} \sum_{\substack{n>-N,m>-M\\ n+m = k}} [x_n,y_m]$ is a well-defined element of $\lhg(\M)$ since the inner sum is finite for each $k$. This bracket obeys the Jacobi identity and agrees with the usual bracket on $\lg(\M)\subset \lhg(\M)$.}

\subsection{The group $\lhN_+(\M)$} \label{sec: group lhN}

For every $k \in \ZZ_{> 0}$, the Baker-Campbell-Hausdorff formula then endows the vector space $\ln_+(\M)/ \ln_k(\M)$ with the structure of a group. Specifically, we denote this group by
\begin{equation*}
\exp \big( \ln_+(\M) / \ln_k(\M) \big) \coloneqq \{ \exp(m) \,|\, m \in \ln_+(\M) / \ln_k(\M) \},
\end{equation*}
whose elements are denoted formally as exponentials of elements in $\ln_+(\M) / \ln_k(\M)$. The group operation is then defined as
\begin{equation} \label{BCH formula}
\exp(x) \exp(y) \coloneqq \exp(x \bullet y) = \exp(x+y+ \ha [x,y] + \ldots)
\end{equation}
for all $x, y \in \ln_+(\M)/ \ln_k(\M)$. Here $x \bullet y$ is given by the Baker-Campbell-Hausdorff formula, whose first few term are shown in the exponent on the right hand side of \eqref{BCH formula}. The sum is finite because $\ln_+(\M) / \ln_k(\M)$ is nilpotent.

Now the formal exponential map
$\exp : \ln_+(\M) / \ln_k(\M) \SimTo \exp\big( \ln_+(\M) / \ln_k(\M) \big)$
is a bijection by definition,
and there are canonical group homomorphisms $\pi^m_k$ making the following diagram commutative:
\begin{equation*}
\begin{tikzpicture}[bij/.style={below,sloped,inner sep=2pt}]
\matrix (m) [matrix of math nodes, row sep=2.7em, column sep=2.5em,text height=1.5ex, text depth=0.25ex]    
{
\exp \big( \ln_+(\M) / \ln_m(\M) \big) & \exp \big( \ln_+(\M) / \ln_k(\M) \big)\\
\ln_+(\M) / \ln_m(\M) & \ln_+(\M) / \ln_k(\M)\\
};
\path[->>] (m-1-1) edge node[above]{$\pi^m_k$} (m-1-2);
\path[<-] (m-1-2) edge node[bij]{$\sim$} node[left=2.5mm]{\raisebox{-2.5mm}{$\exp$}} (m-2-2);
\path[<-] (m-1-1) edge node[bij]{$\sim$} node[left=2.5mm]{\raisebox{-2.5mm}{$\exp$}} (m-2-1);
\path[->>] (m-2-1) edge (m-2-2);
\end{tikzpicture}
\end{equation*}
for all $m \geq k > 0$. We define a group $\lhN_+(\M)$ as the corresponding inverse limit
\begin{equation} \label{pro-unipotent}
\lhN_+(\M) \coloneqq \varprojlim \exp \big( \ln_+(\M) / \ln_k(\M) \big).
\end{equation}
The above commutative diagram defines an exponential map $\exp : \lhn_+(\M) \to \lhN_+(\M)$.

\subsection{Definition of an $\lg$-oper} \label{sec: def oper}
Now, and until \S\ref{sec: coord} below, we shall pick and fix a global coordinate $z$ on $\CC \subset \CP$. Thus, for any $f\in \lhb_+(\M)$ its holomorphic de Rham differential is $d f = dz \del_z f$.

Define $\op_{\lg}(\CP)$ to be the affine space of connections of the form \cite{Fopersontheprojectiveline}
\be \nabla = d + p_{-1} dz + b dz, \qquad b\in \lhb_+(\M). \label{affine space}\ee

\begin{remark} 
This is an affine space over $\lhb_+(\M)$. For the moment, in calling it a space of connections we mean merely that it admits an action of the group $\lhN_+(\M)$ by gauge transformations, as we shall now describe. In \S\ref{sec: coord} we will discuss its behaviour under coordinate transformations.
\end{remark}

Define the adjoint action of the group $\lhN_+(\M)$ on the vector space $\lhg(\M)$ as follows. Let $g = \exp(m) \in \lhN_+(\M)$ with $m = \sum_{n > 0} m_n \in \lhn_+(\M)$. 
\begin{subequations} \label{gauge transf}
For any $u \in \lhg(\M)$, which we write as $u=\sum_{n \geq M} u_n$ for some $M \in \ZZ$, we define the adjoint action of $g$ on $u$ as
\begin{equation} \label{gauge transf c}
g u g^{-1} \coloneqq \sum_{k \geq 0} \frac{1}{k!} \ad_m^k u = \sum_{n \geq M} u_n + \sum_{n \geq M} \sum_{r > 0} [m_r, u_n] + \frac{1}{2} \sum_{n \geq M} \sum_{r,s > 0} \big[ m_s, [m_r, u_n] \big] + \ldots
\end{equation}
where the dots represent terms involving an increasing number of $m_n$'s with $n \geq 1$. Since $\deg m_n = n$ in the principal gradation of $\lg$, it follows that for each $k \in \ZZ_{> 0}$ there are only finitely many terms of degree less than $k$ in the expression on the right hand side. Therefore the sum on the right hand side of \eqref{gauge transf c} is a well-defined element of $\lhg(\M)$. 

\begin{lemma} \label{lem: ga}The definition \eqref{gauge transf c} defines an action of the group $\lhN_+(\M)$ on $\lhg(\M)$.\end{lemma}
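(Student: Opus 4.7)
The strategy is to establish the action level-by-level and then pass to the inverse limit. The only thing one really needs to verify is the composition axiom $(g_1 g_2) u (g_1 g_2)^{-1} = g_1 \bigl( g_2 u g_2^{-1} \bigr) g_1^{-1}$ (the identity axiom is immediate).

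First, I would observe that for each $k \in \ZZ_{>0}$ the ideal $\ln_k(\M) \subset \ln_+(\M)$ is stable under the adjoint action of $\ln_+(\M)$ on $\lg(\M)$: if $m \in \lg_r(\M)$ with $r \geq 1$ and $u \in \ln_k(\M)$, then $[m,u]$ lies in $\ln_{k+r}(\M) \subset \ln_k(\M)$. Hence the adjoint action descends to an action of $\ln_+(\M)$ on the quotient vector space $\lg(\M)/\ln_k(\M)$, and an element $m \in \ln_k(\M)$ acts as zero on any fixed $u \in \lg_M(\M)/\ln_k(\M)$ once its degree pushes the result into $\ln_k(\M)$. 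Thus the action factors through $\ln_+(\M)/\ln_k(\M)$, and on any fixed $u$ it is locally nilpotent: if $u$ is represented by an element of degree $\geq M$, then $\ad_m^r u$ has components of degree $\geq M + r$ and vanishes modulo $\ln_k(\M)$ for $r$ large.

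Next, this local nilpotence makes $\exp(\ad_m)$ a well-defined endomorphism of $\lg(\M)/\ln_k(\M)$ for each $m \in \ln_+(\M)/\ln_k(\M)$. The standard identity $\exp(\ad_{x \bullet y}) = \exp(\ad_x) \circ \exp(\ad_y)$, valid in any nilpotent setting by a formal calculation in the universal enveloping algebra, then produces an honest action of the group $\exp\bigl( \ln_+(\M)/\ln_k(\M) \bigr)$ on $\lg(\M)/\ln_k(\M)$.

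Finally, these actions are compatible with the projections $\lg(\M)/\ln_m(\M) \twoheadrightarrow \lg(\M)/\ln_k(\M)$ and with the group projections $\pi^m_k$ of \S\ref{sec: group lhN}, for $m \geq k > 0$, so they assemble into an action of the inverse limit group $\lhN_+(\M)$ on the inverse limit vector space $\lhg(\M)$. It remains to check that this assembled action coincides with \eqref{gauge transf c}: this follows because, for any fixed $k$, the image of $\sum_{j\geq 0} \frac{1}{j!} \ad_m^j u$ in $\lg(\M)/\ln_k(\M)$ is the same finite sum as the expansion of $\exp(\ad_{m \bmod \ln_k(\M)})(u \bmod \ln_k(\M))$.

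There is no essential obstacle; the proof is a bookkeeping exercise. The one point that deserves care is to check that each infinite sum on the right hand side of \eqref{gauge transf c} is well-defined in $\lhg(\M)$, which is guaranteed by the principal grading (for any fixed target degree only finitely many terms contribute), and that the BCH-based composition identity, which is classical at each finite level, survives passage to the inverse limit; both are handled by the level-by-level reduction above.
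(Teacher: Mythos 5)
Your core idea --- degreewise finiteness plus the formal Baker--Campbell--Hausdorff identity --- is the right one, and it is essentially what the paper's (one-line) proof uses. But the level-by-level scaffolding you build around it contains a step that fails. You claim that, because $[\ln_+(\M),\ln_k(\M)]\subset\ln_k(\M)$, the adjoint action of $\ln_+(\M)$ on $\lg(\M)/\ln_k(\M)$ ``factors through $\ln_+(\M)/\ln_k(\M)$'', and hence that the group $\exp\big(\ln_+(\M)/\ln_k(\M)\big)$ acts on $\lg(\M)/\ln_k(\M)$. This is false: the quotient $\lg(\M)/\ln_k(\M)$ still contains all the classes of negative principal degree, and commutators with those lower the degree. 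Concretely, take $m\in\lg_k(\M)\subset\ln_k(\M)$ and $u=p_{-1}$: then $[m,u]$ has degree $k-1$ and is in general nonzero modulo $\ln_k(\M)$, so $\ln_k(\M)$ does not act trivially on $\lg(\M)/\ln_k(\M)$. (This is precisely why the paper stresses that the $\ln_k(\M)$ are not Lie ideals of $\lg(\M)$ and that $\lhg(\M)$ is an inverse limit of vector spaces only.) Since the level-$k$ group does not act on the level-$k$ space, the compatibility-with-projections step and the passage to the inverse limit, as you state them, do not go through.

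The gap is reparable, but the repair essentially collapses to the direct argument. One option is a doubly-indexed truncation: for $u$ with components of degree $\geq M$, the value of $g u g^{-1}$ modulo $\ln_k(\M)$ depends only on $m$ modulo $\ln_{k-M}(\M)$, so one obtains actions of deeper group quotients on the subspaces with degrees bounded below by $M$, and these can be assembled over both indices; the bookkeeping is heavier than the statement deserves. The paper instead argues directly on $\lhg(\M)$: the well-definedness of $\sum_{k\geq 0}\frac{1}{k!}\ad_m^k u$ was already established when \eqref{gauge transf c} was introduced (in each principal degree only finitely many terms contribute, since $u$ has degrees bounded below and $m$ has strictly positive degrees), and the composition law is the identity $\big(\sum_{k\geq 0}\tfrac{1}{k!}\ad_m^k\big)\big(\sum_{\ell\geq 0}\tfrac{1}{\ell!}\ad_n^\ell\big)u=\sum_{k\geq 0}\tfrac{1}{k!}\ad_{m\bullet n}^k u$, which in each principal degree is a finite, purely formal instance of BCH. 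If you reorganise your argument along either of these lines it becomes correct.
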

\begin{proof}
By the Baker-Campbell-Hausdorff formula we have
\begin{equation*}
\bigg( \sum_{k \geq 0} \frac{1}{k!} \ad_m^k \bigg) \bigg( \sum_{\ell \geq 0} \frac{1}{\ell!} \ad_n^\ell \bigg) u = \sum_{k \geq 0} \frac{1}{k!} \ad_{m \bullet n}^k u,
\end{equation*}
for any $m, n \in \lhn_+(\M)$ and $u \in \lhg(\M)$.
\end{proof}

Now we define also
\begin{equation} \label{gauge transf a}
(dg) g^{-1} \coloneqq \sum_{k \geq 1} \frac{1}{k!} \ad_m^{k-1} dm
= \sum_{n > 0} d m_n + \frac{1}{2} \sum_{n,r > 0} [m_r, d m_n] + \ldots,
\end{equation}
\end{subequations}
which is a well-defined sum in $\lhn_+(\M)dz$. 
\begin{lemma} \label{lem: dga} For any  $g,h\in \lhN_+(\M)$, we have
\be d(gh) (gh)^{-1} = g \left((dh) h^{-1}\right) g^{-1} + (dg) g^{-1}. \nn\ee
\end{lemma}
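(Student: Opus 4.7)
The plan is to verify the identity modulo $\ln_k(\M)$ for every $k \geq 1$ and then pass to the inverse limit. Fix $k \geq 1$ and consider the quotient $\ln_+(\M)/\ln_k(\M)$, which is a nilpotent Lie algebra over $\M$. Since it is nilpotent, we may realise it as a Lie subalgebra of an honest associative $\M$-algebra $A_k$ (for instance, inside the finite-dimensional quotient of its universal enveloping algebra modulo the appropriate power of the augmentation ideal, adjoining a unit). In $A_k$ the exponential $\exp(m) = \sum_{j\geq 0} m^j/j!$ is a finite sum, and the images $\bar g, \bar h$ of $g, h$ under the projection $\lhN_+(\M)\twoheadrightarrow\exp(\ln_+(\M)/\ln_k(\M))$ become genuine invertible elements of $A_k$ with $\bar g^{-1} = \exp(-\bar m)$.

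First I would verify the two translation identities relating the formal definitions \eqref{gauge transf c} and \eqref{gauge transf a} to honest calculations inside $A_k$. Namely, expanding inside $A_k$ one has
\begin{equation*}
d(\exp \bar m)\,\exp(-\bar m) = \sum_{j\geq 1} \frac{1}{j!}\sum_{i=0}^{j-1} \bar m^{\,i}(d\bar m)\,\bar m^{\,j-1-i}\,\exp(-\bar m),
\end{equation*}
and a standard rearrangement (regrouping powers of $\bar m$ using $\bar m^{\,i}(d\bar m) = \sum_{r=0}^{i}\binom{i}{r}\ad_{\bar m}^{r}(d\bar m)\,\bar m^{\,i-r}$) collapses the right-hand side to $\sum_{k\geq 1}\frac{1}{k!}\ad_{\bar m}^{\,k-1}(d\bar m)$, matching \eqref{gauge transf a} modulo $\ln_k(\M)\,dz$. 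The analogous classical identity $\exp(\bar m)\,u\,\exp(-\bar m) = \sum_{k\geq 0}\frac{1}{k!}\ad_{\bar m}^{\,k}u$ matches \eqref{gauge transf c}.

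Given these identifications, the lemma in $A_k$ is the familiar associative-algebra Leibniz computation
\begin{equation*}
d(\bar g\bar h)(\bar g\bar h)^{-1} = \bigl[(d\bar g)\bar h + \bar g\,(d\bar h)\bigr]\bar h^{-1}\bar g^{-1} = (d\bar g)\bar g^{-1} + \bar g\,\bigl((d\bar h)\bar h^{-1}\bigr)\bar g^{-1},
\end{equation*}
where the product on the right-hand side of the last expression is understood as conjugation, which by the previous step equals the formal adjoint action \eqref{gauge transf c}. Hence the identity of the lemma holds modulo $\ln_k(\M)\,dz$. Since by construction $\lhN_+(\M)$, $\lhn_+(\M)$ and the adjoint action on $\lhg(\M)$ are all defined as compatible systems over the tower $k\geq 1$, both sides of the identity are compatible families, so passing to the inverse limit establishes the lemma in $\lhn_+(\M)\,dz$.

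The only real obstacle is the bookkeeping in the translation step: one must be confident that the formal series \eqref{gauge transf c} and \eqref{gauge transf a}, defined a priori just as well-posed elements of the pro-nilpotent completion, genuinely compute conjugation and logarithmic differentiation of $\exp$ in any faithful associative realisation. This is essentially the standard derivation of the Maurer-Cartan expansion $d(\exp X)\exp(-X) = \frac{e^{\ad X}-1}{\ad X}(dX)$, but carried out grade-by-grade in the principal gradation of $\lg$; once this bookkeeping is done at finite depth $k$, everything else is the classical calculation.
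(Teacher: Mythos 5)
Your proof is correct, but it takes a genuinely different route from the paper's. The paper never leaves the formal setting: it first checks, directly from the definitions \eqref{gauge transf}, the derivation identity $d(gyg^{-1}) = [\,(dg)g^{-1}, gyg^{-1}\,] + g(dy)g^{-1}$, then applies $d$ to the action property $(gh)y(gh)^{-1} = g(hyh^{-1})g^{-1}$ of Lemma \ref{lem: ga} and concludes that the discrepancy $-d(gh)(gh)^{-1} + g((dh)h^{-1})g^{-1} + (dg)g^{-1}$ commutes with every $gyg^{-1}$, hence vanishes because the centre of $\lhn_+(\M)$ is trivial. You instead truncate modulo $\ln_k(\M)$, realise the finite-dimensional nilpotent quotient faithfully inside an associative algebra $A_k$, check that the formal series \eqref{gauge transf c} and \eqref{gauge transf a} compute honest conjugation and $d(\exp \bar m)\exp(-\bar m)$ there (your binomial rearrangement is the standard Maurer--Cartan expansion and is correct), and then the identity is literally the Leibniz rule, after which the inverse-limit compatibility (valid here because only positive-grade elements are conjugated, so all operations are degree-wise determined by truncations) finishes the argument. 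What your route buys is conceptual transparency -- once inside $A_k$ there is nothing to verify beyond associative calculus -- at the price of two standard but nontrivial inputs you should flag: faithfulness of the chosen realisation on the Lie algebra (the quotient $U/I^N$ works because $\mathfrak{n}\cap I^N = \gamma_N(\mathfrak{n}) = 0$ for $N$ beyond the nilpotency class, or one can instead invoke a faithful representation by nilpotent matrices), and the fact that the BCH-defined group product \eqref{BCH formula} is carried to the genuine associative product in $A_k$, so that $\overline{gh} = \bar g\bar h$; without the latter the left-hand side of the lemma, which is \eqref{gauge transf a} applied to $m \bullet n$, is not yet identified with $d(\bar g \bar h)(\bar g\bar h)^{-1}$. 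The paper's route avoids any embedding and any exponential calculus, but in exchange relies on an unproved-in-print ``direct calculation'' for the derivation identity and on the centre-triviality observation.
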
 
\begin{proof} By direct calculation from the definitions \eqref{gauge transf} one verifies that 
\be d( gyg^{-1}) = \left[ dg g^{-1}, gyg^{-1}\right] + g (dy) g^{-1},\nn\ee
for any $y \in \lhg(\M)$ and any $g\in \lhN_+(\M)$.
By Lemma \ref{lem: ga}, we have $(gh) y (gh)^{-1} = g (h y h^{-1}) g^{-1}$, and on applying $d$ to both sides we obtain
\be \left[- d(gh) (gh)^{-1} + g \left((dh) h^{-1}\right) g^{-1} + (dg) g^{-1}, x \right] = 0\nn\ee
where $x=gyg^{-1}$ is arbitrary. Since the centre of $\lhn_+(\M)$ is trivial, the result follows. 
\end{proof}

Observe that $g p_{-1} g^{-1} -p_{-1} \in \lhb_+(\M)$, so that if $u \in \lhg(\M)$ is of the form $u=p_{-1} + b$ with $ b\in \lhb_+(\M)$ then so is $gug^{-1}$. Hence, from Lemmas \ref{lem: ga} and \ref{lem: dga}, we have the following.

\begin{proposition}
We have an action of $\lhN_+(\M)$ on $\op_{\lg}(\CP)$ defined by
\begin{align*}
\lhN_+(\M) \times \op_{\lg}(\CP) &\longrightarrow \op_{\lg}(\CP), \\
(g, d + p_{-1} dz + b dz) &\longmapsto d + g p_{-1} g^{-1} dz - (dg) g^{-1} + g b g^{-1}dz,
\end{align*}
which we refer to as the action by \emph{gauge transformations}. If $\nabla \in \op_{\lg}(\CP)$ then we denote by $\nabla^g \in \op_{\lg}(\CP)$ its gauge transformation by an element $g \in \lhN_+(\M)$. \qed\end{proposition}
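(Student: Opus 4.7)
The plan is to verify two things: that the proposed formula produces a connection again in $\op_{\lg}(\CP)$, and that the assignment respects the group law on $\lhN_+(\M)$.

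For the first point, I would rewrite
\[
\nabla^g = d + p_{-1} dz + \bigl((gp_{-1}g^{-1} - p_{-1}) + gbg^{-1}\bigr) dz - (dg)g^{-1},
\]
and check that each term beyond $p_{-1} dz$ lies in $\lhb_+(\M) dz$. The observation $gp_{-1}g^{-1} - p_{-1} \in \lhb_+(\M)$ is recorded just before the statement. The term $(dg)g^{-1}$ lies in $\lhn_+(\M) dz$ by the defining sum \eqref{gauge transf a}. For $gbg^{-1}$, one reads off from \eqref{gauge transf c} that it equals $b$ plus a sum of iterated brackets $[m_{r_1}, \ldots [m_{r_k}, b] \ldots]$ with each $r_i > 0$; since every such bracket lies in $\lhn_+(\M)$ (the $m_{r_i}$ all have strictly positive principal degree), the full expression stays in $\lhb_+(\M)$.

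For the second point, I would establish the composition law $(\nabla^g)^h = \nabla^{hg}$, so that setting $\Phi_g(\nabla) \coloneqq \nabla^g$ yields $\Phi_g \circ \Phi_h = \Phi_{gh}$, \emph{i.e.}\ a left action. Unpacking $(\nabla^g)^h$ from the definition and using Lemma \ref{lem: ga} to collapse the nested adjoint actions on $p_{-1} + b$, one finds
\[
(\nabla^g)^h = d + (hg)(p_{-1} + b)(hg)^{-1} dz - h\bigl((dg)g^{-1}\bigr)h^{-1} - (dh)h^{-1}.
\]
Applying Lemma \ref{lem: dga} (with $hg$ in place of $gh$) identifies the last two terms as $-d(hg)(hg)^{-1}$, and the right-hand side is then $\nabla^{hg}$ by the definition of gauge transformation. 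The identity $\nabla^1 = \nabla$ is immediate from the formulas, closing the action axioms.

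No step poses a real obstacle --- the proposition is essentially the bookkeeping that Lemmas \ref{lem: ga} and \ref{lem: dga} were constructed to handle. The only point requiring some care is in the first part, where one must confirm that the adjoint action really does preserve $\lhb_+(\M)$ inside $\lhg(\M)$ despite the inverse-limit structure; but this reduces to the same degree-counting argument that made \eqref{gauge transf c} well-defined in the first place.
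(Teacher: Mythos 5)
Your proposal is correct and follows the same route as the paper: the paper's proof is precisely the observation that $g p_{-1} g^{-1} - p_{-1} \in \lhb_+(\M)$ (so the form of the connection is preserved) combined with Lemmas \ref{lem: ga} and \ref{lem: dga}, which is exactly the degree-counting check and composition-law computation you carry out. You merely spell out the bookkeeping that the paper leaves implicit, and your verification $(\nabla^g)^h = \nabla^{hg}$ via $d(hg)(hg)^{-1} = h\bigl((dg)g^{-1}\bigr)h^{-1} + (dh)h^{-1}$ is the intended argument.
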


Our main object of interest, the space of \emph{$\lg$-opers}, can now be defined, following \cite{Fopersontheprojectiveline}, as the quotient of the affine space \eqref{affine space} by this gauge action
\begin{equation*}
\Op_{\lg}(\CP) \coloneqq \op_{\lg}(\CP) \big/ \lhN_+(\M).
\end{equation*}
In fact, we shall be interested in certain affine subspaces of $\Op_{\lg}(\CP)$ defined as follows.

\subsection{Twist function $\varphi$} \label{sec: twist function}
Fix a choice of meromorphic function $\varphi$ on $\CP$, called the \emph{twist function}. We call a derivation element $\Lambda$ of $\lh$ \emph{normalised} if $\langle \Lambda, \cent \rangle = 1$. 
Define $\op_{\lg}(\CP)^\varphi$ to be the affine subspace of $\op_{\lg}(\CP)$ consisting of connections of the form
\be d + p_{-1} dz - \Lambda \varphi dz + b' dz, \qquad b' \in \lhb'_+(\M). \nn\ee

\begin{lemma} \label{lem: Lambda indep}
The affine subspace $\op_{\lg}(\CP)^\varphi$ is independent of the choice of normalised derivation element $\Lambda$, and it is stable under $\lhN_+(\M)$-valued gauge transformations.
\begin{proof}
Let $\Lambda$ and $\Lambda'$ be two choices of normalised derivation element of $\lh$. Then we have $\langle \Lambda, \cent \rangle - \langle \Lambda', \cent \rangle = 0$ so that $\Lambda - \Lambda'$ is in the span of the simple roots $\alpha_i$, $i \in I$ and hence $\Lambda - \Lambda' \in \lh \cap \lb'_+$. It follows that $\op_{\lg}(\CP)^\varphi$ is independent of $\Lambda$. 

Since we have the direct sum of vector spaces $\lg = \lgp \oplus \CC \Lambda$, and so in particular $\lb_+ = \lb'_+ \oplus \CC \Lambda$, it follows from the definition of the action of $\lhN_+(\M)$ on $\op_{\lg}(\CP)$ by gauge transformations that $\op_{\lg}(\CP)^\varphi$ is stable.
\end{proof}
\end{lemma}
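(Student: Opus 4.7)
The plan is to handle the two assertions in turn. For independence of the choice of normalised derivation element, I would pick two candidates $\Lambda, \Lambda'$ and note that $\langle \Lambda - \Lambda', \cent\rangle = 0$ by normalisation. Since $\langle \alpha_i, \cent\rangle = 0$ for every $i\in I$ (an immediate consequence of $\null^t\!A\,\null^t\!(\check a_0,\ldots,\check a_\ell)=0$), and since $\{\alpha_i\}_{i\in I}\cup\{\Lambda\}$ is a basis of $\lh$, the difference $\Lambda-\Lambda'$ must lie in $\bigoplus_{i\in I}\CC\alpha_i \subset \lh \cap \lb'_+$. Therefore any connection of the form $d+(p_{-1}-\Lambda\varphi + b')dz$ with $b' \in \lhb'_+(\M)$ can equally be written as $d+(p_{-1}-\Lambda'\varphi + b'')dz$ with $b'' \coloneqq b' + (\Lambda'-\Lambda)\varphi \in \lhb'_+(\M)$, so the affine subspace does not depend on the choice of $\Lambda$.

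For stability under gauge transformations, the key structural input is the direct-sum decomposition $\lg = \lgp \oplus \CC\Lambda$, which holds precisely because $\Lambda$ is a derivation element with $\langle\Lambda,\cent\rangle = 1 \neq 0$, so that $\Lambda \notin \lgp$. Restricting to the Borel subalgebra yields $\lb_+ = \lb'_+ \oplus \CC\Lambda$. Given $g = \exp(m) \in \lhN_+(\M)$ with $m \in \lhn_+(\M) \subset \lgp(\M)$, I would decompose the transformed connection
\begin{equation*}
\nabla^g = d + g p_{-1} g^{-1}\,dz - g \Lambda g^{-1}\,\varphi\, dz + g b' g^{-1}\,dz - (dg) g^{-1}
\end{equation*}
against this splitting. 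Because every nested commutator appearing in \eqref{gauge transf c} and \eqref{gauge transf a} involves $m\in\lgp(\M)$, the three pieces $g p_{-1} g^{-1}$, $g b' g^{-1}$ and $(dg) g^{-1}$ stay inside $\lgp(\M)$. The only term that could contribute a $\CC\Lambda$-component is $-g\Lambda g^{-1}\varphi$, but the expansion $g\Lambda g^{-1} = \Lambda + \sum_{k\geq 1}\tfrac{1}{k!}\ad_m^k\Lambda$ places the entire correction $\sum_{k\geq 1}\tfrac{1}{k!}\ad_m^k\Lambda$ into $\lhn_+(\M) \subset \lgp(\M)$. Thus the $\CC\Lambda$-coefficient of $\nabla^g$ is still exactly $-\varphi$, and all the remaining $\lhb'_+(\M)$-valued corrections can be gathered into a new element $b'' \in \lhb'_+(\M)$, putting $\nabla^g$ back in $\op_{\lg}(\CP)^\varphi$.

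I do not anticipate a genuine obstacle here: the whole statement is essentially a compatibility check between the gauge action and the splitting $\lg = \lgp \oplus \CC\Lambda$. The only minor bookkeeping point is verifying that all the infinite sums above make sense in the appropriate inverse-limit completions, which is already guaranteed by Lemma \ref{lem: ga} and the grade-by-grade convergence of the series defining \eqref{gauge transf c} and \eqref{gauge transf a}.
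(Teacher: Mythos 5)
Your proof is correct and follows essentially the same route as the paper's: the first part rests on the observation that $\Lambda-\Lambda'$ pairs to zero with $\cent$ and hence lies in the span of the simple roots, i.e.\ in $\lh\cap\lb'_+$, and the second part on the splitting $\lg=\lgp\oplus\CC\Lambda$ together with the fact that all correction terms produced by the gauge action lie in (the completion of) $\lgp(\M)$, so the $\CC\Lambda$-coefficient $-\varphi$ is untouched. You merely spell out the expansion of $g\Lambda g^{-1}$ and the convergence bookkeeping that the paper leaves implicit.
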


Given a choice of twist function $\varphi$, we may now define the corresponding affine subspace of $\lg$-opers as
\begin{equation} \label{lg opers twist}
\Op_{\lg}(\CP)^\varphi \coloneqq \op_{\lg}(\CP)^\varphi \big/ \lhN_+(\M).
\end{equation}
If $\nabla \in \op_{\lg}(\CP)^\varphi$ then we shall denote its class in $\Op_{\lg}(\CP)^\varphi$ by $[\nabla]$.

We introduce also the \emph{twisted de Rham differential} corresponding to the twist function $\varphi$. For every $f\in \lhb_+(\M)$, 
\begin{align} \label{twisted de Rham def}
d^\varphi f &\coloneqq df - {h^\vee}^{-1}\varphi (\ad_\rho f)  dz\\
&\,=dz \left( \del_z f - {h^\vee}^{-1} \varphi(\ad_\rho f)  \right).\nn
\end{align}

\subsection{Quasi-canonical form of an $\lg$-oper} \label{sec: quasi-can form}
Recall, from \S\ref{sec: princ sub}, the definition of the principal subalgebra $\a$ of $\lg$, and its basis $\{ p_j \}_{j \in \pm E} \cup \{ \delta, \rho \}$ where $E$ is the multiset of positive exponents of $\lg$.

Let $\hat\a(\M)$ denote the completion of the algebra $\a(\M)$ of $\a$-valued mermorphic functions on $\CP$:
\begin{equation*}
\hat \a(\M) \coloneqq \varprojlim \a(\M)/(\a \cap \n_k)(\M).
\end{equation*}
For each $n \in \ZZ_{\geq 0}$, let $\hat\a_{\geq n}(\M) \coloneqq \varprojlim \a_{\geq n}(\M)/(\a_{\geq n} \cap \n_k)(\M)$, where $\a_{\geq n}\coloneqq \bigoplus_{j=n}^\8 \a_j$. These are Lie subalgebras of $\hat\a(\M)$.

\begin{theorem} \label{thm: quasi-canonical form}
Every class $[\nabla] \in \Op_{\lg}(\CP)^\varphi$ has a representative $\nabla \in \op_{\lg}(\CP)^\varphi$ of the form
\begin{equation*}
\nabla = d + p_{-1} dz - {h^\vee}^{-1} \rho \, \varphi dz + a dz, \qquad a\in \hat\a_{\geq 1}(\M).
\end{equation*}
We say that such a representative is in \emph{quasi-canonical form}. For any $g \in \lhN_+(\M)$, $\nabla^g$ is still in quasi-canonical form if and only if $g = \exp(f) \in \exp( \hat\a_{\geq 2}(\M) )$, in which case
$\nabla^g = \nabla - d^\varphi f$.

Equivalently but more explicitly, every class $[\nabla] \in \Op_{\lg}(\CP)^\varphi$ has a \emph{quasi-canonical} representative of the form 
\begin{equation}
\nabla = d + \Bigg( p_{-1} - \frac{\varphi}{h^\vee} \rho + \sum_{j \in E} v_j p_j  \Bigg) dz,\label{qcf}
\end{equation}
where $v_j$ is a meromorphic function on $\CP$ for each positive exponent $j \in E$. The gauge transformations in $\lhN_+(\M)$ preserving quasi-canonical form are precisely those of the form $\exp\big( \sum_{j \in E_{\geq 2}} f_j p_j \big)$ with $f_j$ meromorphic functions on $\CP$. The effect of such gauge transformations on the functions $v_j$ is to send 
\begin{equation}
v_j \longmapsto v_j - f'_j+ \frac{j \varphi}{h^\vee} f_j\label{vuptof}
\end{equation}
for all $j\in E_{\geq 2}$, and to leave $v_1$ invariant.
\end{theorem}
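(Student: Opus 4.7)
The plan is to iteratively gauge-fix $\nabla$ grade by grade in the principal gradation, following a Drinfel'd-Sokolov-style argument that splits naturally into existence of a quasi-canonical representative and identification of the residual gauge freedom.

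\textbf{Existence.} Start from $\nabla = d + (p_{-1} - {h^\vee}^{-1}\rho\,\varphi + b')dz$ and write $b' = \sum_{n\geq 0} b'_n$ with $b'_0 \in \lh'(\M)$. Using $\lh = \lh' \oplus \CC\rho$ and the decomposition $\lh' = \CC\delta \oplus \c_0$ (since $\c_0 \subset \lgp$), the piece $b'_0$ contains $\CC\delta$- and $\c_0$-components but no $\CC\rho$-component. Build $g = \lim_K \exp(\gamma_K)\cdots\exp(\gamma_1) \in \lhN_+(\M)$ with $\gamma_k \in \lg_k(\M)$; the limit exists in \eqref{pro-unipotent} because each $\exp(\gamma_k)$ is trivial modulo $\ln_k(\M)$. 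The crucial observation is that $\exp(\gamma_k)$ modifies $\nabla$ only at grades $\geq k-1$, with leading effect $[\gamma_k, p_{-1}] \in \lg_{k-1}$, so earlier steps are preserved. By induction on $k\geq 1$: at step $k\geq 2$, take $\gamma_k \in \c_k(\M)$ to be the unique preimage, under the isomorphism $\ad_{p_{-1}}\colon \c_k \xrightarrow{\sim} \c_{k-1}$, of the current $\c_{k-1}$-component, leaving the $\a_{k-1}$-piece unconstrained and giving $v_{k-1}p_{k-1}$. At step $k=1$, take $\gamma_1 = \beta p_1 + \gamma_1^\c \in \a_1(\M)\oplus\c_1(\M) = \lg_1(\M)$, with $\gamma_1^\c$ cancelling the $\c_0$-component of $b'_0$ and $\beta\in\M$ tuned via $[p_1, p_{-1}] = \delta$ to cancel the $\CC\delta$-component of $b'_0$.

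\textbf{Residual gauge.} Suppose $g = \exp(m) \in \lhN_+(\M)$, $m = \sum_{n\geq 1} m_n$, maps a quasi-canonical $\nabla$ to another quasi-canonical $\nabla^g$. Expand the coefficient of $dz$ in $\nabla^g - \nabla$ grade by grade. At grade $0$ only the first-order term $[m_1, p_{-1}]$ contributes (higher-order brackets live in grades $\geq 1$, and the conjugations of $\rho$ and $a$ never reach grade $0$), forcing $m_1 = 0$ by injectivity of $\ad_{p_{-1}}\colon \lg_1 \hookrightarrow \lg_0$ (Remark~\ref{rem: pirem}\eqref{rem: Bn vs im p-1}). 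Inductively, assuming $m_1=0$ and $m_i \in \a_i(\M)$ for $2\leq i\leq k$, the abelian structure of $\a_{\geq 1}$ from \eqref{Lie alg a com rel} together with $[\a_i, p_{-1}] = 0$ for $i\geq 2$ annihilates every higher-order Baker-Campbell-Hausdorff contribution, collapsing the grade-$k$ equation to
\begin{equation*}
a'_k - a_k = -\ad_{p_{-1}} m_{k+1} + k{h^\vee}^{-1}\varphi\, m_k - \partial_z m_k.
\end{equation*}
Its $\c_k$-projection yields $m_{k+1} \in \a_{k+1}(\M)$, and its $\a_k$-projection reads $(a'_k-a_k)\,dz = -d^\varphi m_k$. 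This identifies $m \in \hat\a_{\geq 2}(\M)$ with $\nabla^g = \nabla - d^\varphi f$ for $f = m$; substituting $f = \sum_{j\in E_{\geq 2}} f_j p_j$ and $a_j = v_j p_j$ recovers \eqref{vuptof}, while the $k=1$ instance of the induction shows $v_1$ is invariant.

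\textbf{Main obstacle.} The delicate bookkeeping is concentrated at the exceptional low grades flagged in Remark~\ref{rem: pirem}\eqref{rem: Bn vs im p-1}: injectivity of $\ad_{p_{-1}}$ on $\lg_1$ simultaneously forces the uncorrectability of $v_1$ in the uniqueness argument and, via $[p_1, p_{-1}] = \delta$, the necessity of the extra $\CC p_1$-direction in $\gamma_1$ in the existence argument. Away from these low grades the computation is uniform once one exploits the abelian property of $\a_{\geq 1}$ to kill all higher-order BCH corrections.
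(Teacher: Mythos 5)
Your proof is correct and takes essentially the same route as the paper: a Drinfel'd--Sokolov-style recursion in the principal gradation using $\lg_n = \a_n \oplus \c_n$ together with the injectivity/isomorphism properties of $\ad_{p_{-1}}$, followed by the same residual-gauge analysis exploiting that $\a_{\geq 1}$ is abelian and that $[\a_{\geq 2}, p_{-1}] = 0$. The only difference is cosmetic: you fold the cancellation of the $\CC\delta$-component of the grade-zero coefficient into the first factor $\exp(\beta p_1 + \gamma_1^{\c})$ of an infinite product, whereas the paper performs a single gauge transformation by $\exp(m)$ with $m_n \in \c_n(\M)$ and then removes the leftover $\delta$-term by a further transformation $\exp(-\chi p_1)$.
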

\begin{proof}
Let $[\nabla] \in \Op_{\lg}(\CP)^\varphi$. Since ${h^\vee}^{-1} \rho$ is a normalised derivation element of $\lh$, see \S\ref{sec: principal grad}, it follows using Lemma \ref{lem: Lambda indep} that there is a representative of $[\nabla]$ of the form $\nabla = d + p_{-1} dz - {h^\vee}^{-1} \rho \, \varphi dz + \sum_{n \geq 0} u_ndz \in \op_{\lg}(\CP)^\varphi$ for some functions $u_n \in \lgp_n(\M)$.

Let $g \in \lhN_+(\M)$ be of the form $g = \exp(m)$ with $m = \sum_{n>0} m_n$ where $m_n \in \c_n(\M)$ for each $n > 0$. Using \eqref{gauge transf} we determine the gauge transformation of $\nabla$ by $g$ to be
\begin{equation*}
\nabla^g = d + p_{-1} dz - {h^\vee}^{-1} \rho \, \varphi dz + \sum_{n \geq 0} a_ndz
\end{equation*}
where $a_n \in\lgp_n(\M)$ for each $n \geq 0$ are of the form
\begin{align} \label{recursion can form}
a_ndz &= u_ndz + [m_{n+1}, p_{-1}] dz + F_n\big( \{ u_k, dm_k, m_k \}_{k<n} \big)\\
&\qquad - d^\varphi m_n + [m_n, u_0] + \ha \big[ m_n, [m_1, p_{-1}] \big] dz + \ha (1 - \delta_{n,1}) \big[ m_1, [m_n, p_{-1}] \big] dz. \notag
\end{align}
The last term on the first line of the right hand side contains all the terms involving only $m_k$ and $u_k$ with $k < n$, and the second line contains those terms involving $m_n$. Let $w_ndz$, $w_n \in \lgp_n(\M)$, denote the sum of all these terms, \emph{i.e.} we rewrite \eqref{recursion can form} as
\begin{equation} \label{recursion can form bis}
a_n = u_n + [m_{n+1}, p_{-1}] + w_n.
\end{equation}
We can now use \eqref{recursion can form bis} to determine $m_n \in \c_n(\M)$ recursively for all $n > 0$ by requiring that $a_n \in \a_n(\M)$ for each $n \geq 0$. Indeed, suppose $m_k$ has been determined for each $k \leq n$. Then $w_n$ is known (in fact $w_0 = 0$ for the base case) and so decomposing $u_n + w_n$ relative to the direct sum \eqref{lg n decompA} (or rather $\lgp_0 = \CC \delta \oplus \c_0$ in the case $n=0$) we can use the injectivity of $\ad_{p_{-1}} : \c_{n+1} \to \c_n$ to fix $m_{n+1}$ uniquely so as to cancel the component of $u_n + w_n$ in $\c_n$, thereby ensuring that $a_n \in \a_n(\M)$ for all $n > 0$ or $a_0 \in (\CC \delta)(\M)$. This proves $\nabla^g \in d + p_{-1} dz - {h^\vee}^{-1} \rho \, \varphi dz + (\CC \delta \oplus \hat\a_{\geq 1})(\M)dz$.

Let us write $\nabla^g = d + p_{-1} dz - {h^\vee}^{-1} \rho \, \varphi dz +  \delta \chi dz + a'dz$ with $\chi \in \M$ and $a' \in \hat\a_{\geq 1}(\M)$. In order to remove the term in $\delta$, we can apply a further gauge transformation by $h = \exp(- \chi p_1)$, which yields
\begin{equation*}
\nabla^{hg} = d + p_{-1} dz - {h^\vee}^{-1} \rho \, \varphi dz + a'dz + d^\varphi (\chi p_1).
\end{equation*}
The last two terms belong to $\hat\a_{\geq 1}(\M)dz$, which completes the proof of the first statement.

Finally, suppose $\nabla = d + p_{-1} dz - {h^\vee}^{-1} \rho \, \varphi dz + udz$ where $u \in \hat\a_{\geq 1}(\M)$ and let $g = \exp(m)$ for some $m \in \lhn_+(\M)$. Then $\nabla^g = d + p_{-1} dz - {h^\vee}^{-1} \rho \, \varphi dz + vdz$ where $v = \sum_{n \geq 0} a_n \in \lhb'_+(\M)$ is given by \eqref{recursion can form}. We want to recursively determine the components $m_n \in \lg_n (\M)$ of $m$ so that $v \in \hat\a_{\geq 1}(\M)$. Considering first the case $n=0$ we have $u_0 = a_0 = 0$ so that \eqref{recursion can form} reduces to $[m_1, p_{-1}] = 0$, and therefore $m_1 = 0$ since $ \ad_{p_{-1}} : \lg_1 \to \lg_0$ is injective. In particular, for every $n \geq 0$ the last two terms on the right hand side of \eqref{recursion can form} are now absent. Suppose that having $a_k \in \a_k(\M)$ for all $k < n$ requires that $m_k \in \a_k(\M)$ for each $k \leq n$. It just remains to show that the condition $a_n \in \a_n(\M)$ also implies $m_{n+1} \in \a_{n+1}(\M)$. For this we note that all the terms contained in $F_n\big( \{ u_k, dm_k, m_k \}_{k<n} \big)$ are commutators, which vanish using the fact that $u \in \hat\a_{\geq 1}(\M)$, $m_1=0$, $m_k \in \a_k(\M)$ for $1 < k \leq n$ and $\a_{\geq 1}$ is abelian. So \eqref{recursion can form} now simply reads
\begin{equation*}
a_ndz - u_ndz + d^\varphi m_n = [m_{n+1}, p_{-1}] dz.
\end{equation*}
The left hand side clearly belongs to $\a_n(\M)dz$, using the fact that $\ad_\rho m_n = n m_n$. On the other hand, the right hand side belongs instead to $\c_n(\M)dz$ since $\c_n = (\textup{im} \ad_{p_{-1}})_n$ for every $n > 0$, cf. Remark \ref{rem: pirem}(\ref{rem: Bn vs im p-1}). Hence both sides vanish so that, in particular, $m_{n+1} \in \a_{n+1}(\M)$. The vanishing of the left hand side is the final statement about the form of $\nabla^g - \nabla$.
\end{proof}

Although the quasi-canonical form of an $\lg$-oper $[\nabla] \in \Op_{\lg}(\CP)^\varphi$ is not unique, the coefficient $v_1$ of $p_1$ in any quasi-canonical form is the same. To emphasise the origin of this distinction between $v_1$  and all the remaining coefficients $v_j$, $j\in E_{\geq 2}$, the following is helpful.
\begin{proposition}
Every class $[\nabla] \in \Op_{\lg}(\CP)^\varphi$ has a representative of the form 
\begin{equation}
\nabla = d + \Bigg( p_{-1} - \frac{\varphi}{h^\vee} \rho + v_0 \delta + \sum_{j \in E} v_j p_j  \Bigg) dz,
\end{equation}
where $v_j$ is a meromorphic function on $\CP$ for each  $j \in \{0\} \cup E$. The gauge transformations in $\lhN_+(\M)$ preserving this form are precisely those of the form $\exp\big( \sum_{j \in E} f_j p_j \big)$ with $f_j$ meromorphic functions on $\CP$. The effect of such gauge transformations on the functions $v_j$ is as in \eqref{vuptof} for  all $j\in E_{\geq 2}$, and now also
\begin{align}
v_0 & \longmapsto v_0 + f_1 \nn\\
v_1 &\longmapsto v_1 - f'_1+ \frac{\varphi}{h^\vee} f_1.\nn
\end{align}
\end{proposition}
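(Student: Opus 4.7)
The plan is to piggyback directly on the proof of Theorem \ref{thm: quasi-canonical form}. The first stage of that argument produced, \emph{before} the final gauge transformation by $\exp(-\chi p_1)$ was applied, a representative of the shape
\begin{equation*}
\nabla = d + p_{-1}\,dz - {h^\vee}^{-1}\rho\,\varphi\,dz + \chi\delta\,dz + a'\,dz,
\qquad \chi\in\M,\;\; a'\in\hat\a_{\geq 1}(\M).
\end{equation*}
This already has the form asserted here, with $v_0 = \chi$ and the $v_j$, $j\in E$, the coordinates of $a'$ in the basis $\{p_j\}_{j\in E}$. So existence is immediate.

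For the stabiliser I would rerun the inductive analysis from the second half of the proof of Theorem \ref{thm: quasi-canonical form}, modified to allow $u_0\in(\CC\delta)(\M)$ rather than $u_0=0$. Let $g=\exp(m)\in \lhN_+(\M)$ with $m=\sum_{n>0}m_n$ and suppose $\nabla^g$ is again of the stated form. At grade zero \eqref{recursion can form} reduces to $a_0\,dz = u_0\,dz + [m_1,p_{-1}]\,dz$. Decomposing $m_1 = \xi p_1 + \eta$ with $\xi\in\M$ and $\eta\in\c_1(\M)$ gives $[m_1,p_{-1}] = \xi\delta + [\eta,p_{-1}]$, where $[\eta,p_{-1}]\in\c_0(\M)$; since the $\c_0$-component of $a_0$ must vanish and $\ad_{p_{-1}}:\c_1\to\c_0$ is an isomorphism, we conclude $\eta=0$, i.e.\ $m_1\in\a_1(\M)$. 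The inductive step for $n\geq 1$ then goes through essentially as in the theorem: assuming $m_k\in\a_k(\M)$ for $1\le k\le n$, the commutator terms $F_n$ and $[m_n,u_0]$ vanish because all entries lie in the abelian subalgebra $\hat\a_{\geq 1}(\M)$ together with the central element $\delta$, and the two bracket terms on the second line of \eqref{recursion can form} vanish because $[m_1,p_{-1}]\in\CC\delta$ is central and $[m_n,p_{-1}]=0$ for $n\geq 2$ by \eqref{Lie alg a com rel}. Equation \eqref{recursion can form} then reduces to $a_n\,dz - u_n\,dz + d^\varphi m_n = [m_{n+1},p_{-1}]\,dz$, and comparing the $\a_n\oplus\c_n$ decomposition of both sides forces $m_{n+1}\in\a_{n+1}(\M)$. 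Hence $g=\exp\!\big(\sum_{j\in E}f_j p_j\big)$.

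The explicit transformation of the coefficients is then a direct, terminating BCH calculation. Because $\hat\a_{\geq 1}(\M)$ is abelian, $[m,dm]=0$ and so $(dg)g^{-1}=dm=\sum_{j\in E}f_j'\,p_j\,dz$. From the relations \eqref{Lie alg a com rel} one has $[m,p_{-1}]=f_1\,\delta$ and $[m,\rho] = -\sum_{j\in E}j f_j p_j$, while $[m,\delta]=[m,p_k]=0$; all higher iterated brackets vanish. Therefore $gp_{-1}g^{-1}=p_{-1}+f_1\delta$, $g\rho g^{-1}=\rho-\sum_{j\in E}j f_j p_j$, $g\delta g^{-1}=\delta$ and $gp_j g^{-1}=p_j$. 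Substituting into the definition of $\nabla^g$ and collecting the coefficients of $\delta$ and each $p_j$ gives precisely $v_0\mapsto v_0+f_1$ and $v_j\mapsto v_j - f_j' + (j\varphi/h^\vee)f_j$ for all $j\in E$, which specialises at $j=1$ to the stated formula. The main obstacle is the grade-zero step of the stabiliser analysis: with $m_1$ no longer forced to vanish, one must separately track the $\a_1$- and $\c_1$-components of $m_1$ and settle for the milder conclusion $m_1\in\a_1(\M)$, after which the rest of the induction is almost a rerun of the theorem's argument.
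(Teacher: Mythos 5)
Your proposal is correct and follows essentially the paper's own route: the paper proves this proposition simply by noting it is ``very similar'' to Theorem \ref{thm: quasi-canonical form}, and your write-up is exactly that adaptation, with existence read off from the intermediate step of the theorem's proof (before the $\exp(-\chi p_1)$ transformation) and the stabiliser analysis rerun with the correct new feature at grade zero, namely that $[m_1,p_{-1}]$ may now land in $(\CC\delta)(\M)$ so only the $\c_1$-component of $m_1$ is forced to vanish. The concluding BCH computation of the action on $v_0$ and the $v_j$ is also accurate.
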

\begin{proof}The proof is very similar to that of Theorem \ref{thm: quasi-canonical form}.\end{proof}
Consequently, if one works not with $\lg$ but with the quotient by the centre $\lg/\CC\delta$ then the distinction between $v_1$ and the rest disappears, as follows. (We return to this point in \S\ref{sec: quad Ham} below.)
\begin{corollary}\label{cor: v1}
For an $(\lg/\CC\delta)$--oper $[\nabla] \in \Op_{\lg/\CC\delta}(\CP)^\varphi$, there is always a quasi-canonical representative of the form \eqref{qcf}. The gauge transformations in $\lhN_+(\M)$ preserving this form are precisely those of the form $\exp\big( \sum_{j \in E} f_j p_j \big)$ with $f_j$ meromorphic functions on $\CP$. The effect of such gauge transformations on the functions $v_j$ is as in \eqref{vuptof} but now for all $j\in E$ (including $1$).\qed
\end{corollary}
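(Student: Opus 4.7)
The plan is to derive the Corollary from the preceding Proposition by quotienting out the central ideal $\CC\delta \subset \lg$. Since $\delta$ is central and $\delta \notin \ln_+$, the projection $\pi\colon \lg \twoheadrightarrow \lg/\CC\delta$ is $\lhN_+(\M)$-equivariant, so it induces a natural surjection $\Op_{\lg}(\CP)^\varphi \twoheadrightarrow \Op_{\lg/\CC\delta}(\CP)^\varphi$.

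For existence of the quasi-canonical form, I would lift the given $[\nabla] \in \Op_{\lg/\CC\delta}(\CP)^\varphi$ to an $\lg$-oper and apply the preceding Proposition to obtain a representative of the form $d + \big(p_{-1} - \tfrac{\varphi}{h^\vee}\rho + v_0 \delta + \sum_{j\in E} v_j p_j\big)dz$. Reprojecting to $\lg/\CC\delta$ kills the term $v_0 \delta$, yielding a representative of the desired form \eqref{qcf}.

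For the residual gauge freedom, I would observe that a transformation $g \in \lhN_+(\M)$ preserves \eqref{qcf} in $\lg/\CC\delta$ if and only if, acting on any lift to the Proposition's augmented form in $\lg$, it takes the augmented form back to the augmented form up to a change in $v_0$ — since $v_0$ is the only datum lost under $\pi$. By the Proposition such gauges are exactly those of the form $\exp\big(\sum_{j \in E} f_j p_j\big)$. Their action on $v_j$ for $j \in E_{\geq 2}$ is unchanged from \eqref{vuptof}. Meanwhile, the Proposition's action $v_0 \mapsto v_0 + f_1$ becomes vacuous in $\lg/\CC\delta$, while the remaining piece $v_1 \mapsto v_1 - f'_1 + \tfrac{\varphi}{h^\vee} f_1$ is precisely \eqref{vuptof} at $j=1$.

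The only subtle point is the $\lhN_+(\M)$-equivariance of $\pi$ on connections, which follows from $\CC\delta$ being ad-stable (hence gauge-stable) together with the fact that $\lhN_+(\M)$ is identical whether defined inside $\lg$ or inside $\lg/\CC\delta$. All the substantive work lies in the preceding Proposition; the Corollary simply records what survives the quotient.
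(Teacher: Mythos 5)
Your proposal is correct and follows essentially the same route the paper intends: the Corollary is stated with no separate proof precisely because it is the preceding Proposition read modulo $\CC\delta$, with the $v_0\delta$ term discarded and the action $v_1 \mapsto v_1 - f_1' + \tfrac{\varphi}{h^\vee}f_1$ becoming \eqref{vuptof} at $j=1$. Your care about the $\lhN_+(\M)$-equivariance of the quotient map and the identification of $\lhN_+(\M)$ for $\lg$ and $\lg/\CC\delta$ is exactly the (implicit) content of the paper's argument.
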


Returning to $\lg$-opers, we have the following explicit expression for the coefficient $v_1$ in any quasi-canonical form. 

\begin{proposition} \label{prop: can form u1}
The coefficient of $p_1\in \a_1$ of any quasi-canonical form of an $\lg$-oper $[\nabla] \in \Op_{\lg}(\CP)^\varphi$ is
\begin{equation*}
v_1 = {h^\vee}^{-1} \big( \ha (u_0 | u_0) + (\rho | u'_0) - {h^\vee}^{-1} \varphi (\rho | u_0) + (p_{-1} | u_1) \big),
\end{equation*}
where 
\be \nabla = d + p_{-1} dz - {h^\vee}^{-1} \rho \, \varphi dz + \sum_{n \geq 0} u_n dz \in \op_{\lg}(\CP)^\varphi, \nn\ee 
with $u_n \in \lgp_n$, is any representative of $[\nabla]$.
\begin{proof}
In the present case, the recursion relation \eqref{recursion can form} for $n=0$ gives $u_0 = - [m_1, p_{-1}]$. Note that here we are including in $m_1$ the term $-\chi p_1$ coming from the subsequent gauge transformation performed in the second step of the proof of Theorem \ref{thm: quasi-canonical form}. Using this, the relation \eqref{recursion can form} for $n=1$ then reads
\begin{align*}
a_1 dz &= u_1 dz + [m_2, p_{-1}] dz + \ha \big[ m_1, [m_1, p_{-1}] \big] dz + [m_1, u_0] dz - d^\varphi m_1\\
&= u_1 dz + [m_2, p_{-1}] dz - \ha \big[ m_1, [m_1, p_{-1}] \big] dz - d^\varphi m_1.
\end{align*}
By applying the linear map $(p_{-1}| \cdot)$ to both sides we find
\begin{equation*}
(p_{-1} | a_1)dz = (p_{-1} | u_1) dz + \ha (u_0| u_0) dz - (p_{-1} | d^\varphi m_1),
\end{equation*}
where to obtain the second term on the right hand side we have used again the fact that $u_0 = - [m_1, p_{-1}]$. To evaluate further the last term above, we note that
\begin{equation*}
(p_{-1} | d^\varphi m_1) = ([p_{-1}, \rho] | d^\varphi m_1) = (\rho | [d^\varphi m_1, p_{-1}]) = - (\rho | d u_0) + {h^\vee}^{-1} \varphi (\rho | u_0) dz,
\end{equation*}
where in the last step we used the definition \eqref{twisted de Rham def} of the twisted de Rham differential.
Since $(p_{-1} | p_1) = h^\vee$, we arrive at the desired expression for $v_1 = {h^\vee}^{-1} (p_{-1} | a_1)$.
\end{proof}
\end{proposition}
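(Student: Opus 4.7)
The plan is to extract the coefficient $v_1$ from the recursion \eqref{recursion can form} already established in Theorem \ref{thm: quasi-canonical form}, by computing $a_1$ modulo everything that the invariant bilinear form pairing with $p_{-1}$ annihilates.

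First, I would record what the $n=0$ step of the recursion says once the auxiliary $\exp(-\chi p_1)$ gauge transformation from the proof of Theorem \ref{thm: quasi-canonical form} has been absorbed into $m_1$. Since the quasi-canonical form has vanishing grade-$0$ component, \eqref{recursion can form} at $n=0$ becomes the single clean identity $[m_1, p_{-1}] = -u_0$. This is the only algebraic input about $m_1$ that I will need.

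Next, I would write out \eqref{recursion can form} at $n=1$ and simplify. The terms $[m_1, u_0]$ and $\tfrac12[m_1,[m_1,p_{-1}]]$ collapse into $-\tfrac12[m_1,[m_1,p_{-1}]]$ after substituting $u_0 = -[m_1, p_{-1}]$, and the degree-$1$ piece $[m_2, p_{-1}]$ remains the only place where $m_2$ appears. Thus
\[
a_1\, dz = u_1\, dz + [m_2, p_{-1}]\, dz - \tfrac12 \big[m_1, [m_1,p_{-1}]\big] dz - d^\varphi m_1.
\]
Now I apply $(p_{-1}|\,\cdot\,)$. Since the quasi-canonical form has $a_1 = v_1 p_1$ and $(p_{-1}|p_1)=h^\vee$, the left-hand side is $v_1 h^\vee\, dz$. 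On the right-hand side, invariance gives $(p_{-1}|[m_2,p_{-1}]) = ([m_2,p_{-1}]|p_{-1}) = (m_2|[p_{-1},p_{-1}]) = 0$; and $(p_{-1}|[m_1,[m_1,p_{-1}]]) = ([p_{-1},m_1]|[m_1,p_{-1}]) = (u_0|{-u_0}) = -(u_0|u_0)$, using $[p_{-1},m_1] = u_0$.

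The remaining term $(p_{-1}|d^\varphi m_1)$ is the one that requires care and is where I expect the real work to sit. Here I would exploit the identity $p_{-1} = [p_{-1},\rho]$, which by invariance gives $(p_{-1}|m_1) = (\rho|[m_1,p_{-1}]) = -(\rho|u_0)$. Expanding the twisted differential by \eqref{twisted de Rham def} and using $[\rho,m_1] = m_1$ then yields
\[
(p_{-1}|d^\varphi m_1) = d(p_{-1}|m_1) - {h^\vee}^{-1}\varphi (p_{-1}|m_1)\, dz = -(\rho|u_0')\, dz + {h^\vee}^{-1}\varphi (\rho|u_0)\, dz.
\]
Collecting everything and dividing by $h^\vee$ produces the claimed formula for $v_1$.

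The main obstacle I anticipate is the bookkeeping surrounding the $n=0$ step: verifying that absorbing the $\delta$-component via $\exp(-\chi p_1)$ into a redefined $m_1$ legitimately produces the clean relation $[m_1,p_{-1}]=-u_0$ used above, without contaminating the $n=1$ recursion through unwanted cross-terms. Once that is confirmed, the remainder is a routine application of the invariance of $(\cdot|\cdot)$ and of the derivation property $[\rho,p_{-1}]=-p_{-1}$.
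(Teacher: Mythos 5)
Your proposal is correct and follows essentially the same route as the paper: the $n=0$ relation $[m_1,p_{-1}]=-u_0$ (with $-\chi p_1$ absorbed into $m_1$), the simplified $n=1$ recursion, and pairing with $(p_{-1}|\cdot)$. The only cosmetic difference is that you evaluate $(p_{-1}|d^\varphi m_1)$ by first computing $(p_{-1}|m_1)=-(\rho|u_0)$ and using that $p_{-1}$ is constant, whereas the paper moves the bracket onto $p_{-1}$ via $(\rho|[d^\varphi m_1,p_{-1}])$; both are the same short computation, and the bookkeeping worry you flag about absorbing $\exp(-\chi p_1)$ is harmless since the cross-terms only affect grades $\geq 2$ and $(p_{-1}|[m_2,p_{-1}])=0$ anyway.
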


\begin{remark} \label{rem: can form u1}
Let $\nabla \in \op_{\lg}(\CP)^\varphi$ be as in the statement of Proposition \ref{prop: can form u1} and introduce $\wt u_0 \coloneqq - {h^\vee}^{-1} \rho \,\varphi + u_0 \in \lg_0(\M)$ and $\wt u_n \coloneqq u_n \in \lg_n(\M)$ for every $n > 0$. Then we have 
\be \nabla = d + p_{-1} dz + \sum_{n \geq 0} \wt u_n dz,\nn\ee 
and, using the fact that $(\rho | \rho) = 0$, cf. \S\ref{sec: principal grad}, the expression for the coefficient $v_1$ in any quasi-canonical form of $[\nabla]$ can be rewritten as
\begin{equation*}
v_1 = {h^\vee}^{-1} \big( \ha (\wt u_0 | \wt u_0) + (\rho | \wt u'_0) + (p_{-1} | \wt u_1) \big) dz. \qedhere
\end{equation*}
\end{remark}

\subsection{Twisted homology and functions on the space of affine opers} \label{sec: twisted homology coord}
Our goal is to describe functions $\Op_{\lg}(\CP)^\varphi\to \CC$ on the space of meromorphic $\lg$-opers on $\CP$. 

Theorem \ref{thm: quasi-canonical form} shows that one well-defined map $\Op_{\lg}(\CP)^\varphi\to \M$ is given by extracting the coefficient $v_1$ of $p_1$ in any quasi-canonical form (and Proposition \ref{prop: can form u1} gives the explicit formula). Obviously we can then ``pair'' this function with any point $p\in \CP$ where $v_1$ doesn't have a pole, by simply evaluating it there, $v_1 \mapsto v_1(p)$. 

Yet Theorem \ref{thm: quasi-canonical form} also shows that the remaining data in the oper comes in the form of functions $v_i$, $i\in E_{\geq 2}$, defined only up to certain ``twisted'' derivatives. So they are in some sense cohomology elements.
In \S\ref{sec: coord} we shall make that idea precise by showing that each of the functions $v_i$, $i\in E_{\geq 2}$, represents a cocycle in the cohomology of the de Rham complex with coefficients in a certain local system. A generalization of the usual de Rham theorem states that there is a pairing (given by integrating) between such cocycles and the cycles of the singular homology with coefficients in the dual local system. For the moment though, we are not quite in a position to invoke such results: a local system is a vector bundle with a flat connection and we cannot yet identify the correct bundle, since we have no handle on its transition functions.

Nonetheless, it is already possible to define the integrals one should take to obtain functions  $\Op_{\lg}(\CP)^\varphi\to \CC$, as follows. 

First, let us now and for the remainder of this article restrict attention to the case when the twist function $\varphi$ has only simple poles, \emph{i.e.} we shall take it to be of the form
\begin{equation} \label{twist function}
\varphi(z) \coloneqq \sum_{i=1}^N \frac{k_i}{z - z_i},
\end{equation}
for some $k_i \in \CC^\times$, $i=1,\ldots, N$. It has simple poles in the subset $\{ z_i \}_{i=1}^N \subset \CP$. 

\begin{remark} 
Based on the situation in finite types, \cite{FFT,VY3}, and the shift of argument affine Gaudin model introduced in \cite{FFsolitons}, our expectation is that introducing a pole of order $p\geq 2$ at $z_i$ in the twist function $\varphi$ (and more generally in the Miura $\lg$-opers of \S\ref{sec: class of Miura opers} below) will correspond to a Gaudin model in which one assigns to the marked point $z_i$ a representation of a \emph{Takiff algebra} $\g[t]/t^p\g[t]$ over the affine Kac-Moody algebra $\g$. 
\end{remark} 

We denote the complement of the set of marked points $\{ z_i \}_{i=1}^N$ as
\begin{equation} \label{set X def}
X \coloneqq \CC \setminus \{ z_i \}_{i=1}^N.
\end{equation}
Consider the multivalued holomorphic function $\P$ on $X$ defined by
\begin{equation}\label{def: P}
\P(z) \coloneqq \prod_{i=1}^N (z - z_i)^{k_i},
\end{equation}
which is related to the twist function as $\varphi(z) = \partial_z \log \P(z)$.
Observe that the ambiguity in the function $v_j$, namely \eqref{vuptof}, can be expressed as
\begin{equation*}
\P(z)^{-j/h^\vee} v_j(z) \longmapsto \P(z)^{-j/h^\vee} v_j(z) - \partial_z \big( \P(z)^{-j/h^\vee} f_j(z) \big).
\end{equation*}
We therefore obtain the following corollary of Theorem \ref{thm: quasi-canonical form}.

\begin{corollary}\label{cor: opint}
Suppose 
\be \nabla = d + p_{-1} dz - {h^\vee}^{-1} \rho \, \varphi dz + \sum_{j \in E} v_j p_j dz\nn\ee 
is a quasi-canonical form of an oper $[\nabla]\in \Op_{\lg}(\CP)^\varphi$. 

Let $r\in E_{\geq 2}$ be a positive exponent greater than or equal to 2. 
Let $\gamma$ be any contour in $X = \CC \setminus \{ z_i \}_{i=1}^N$ such that
\begin{enumerate}
\item $\gamma$ is closed;
\item there exists a single-valued branch of the function $\P^{-r/h^\vee}$ along $\gamma$;
\item $v_r$ has no poles (and is therefore holomorphic) along $\gamma$.
\end{enumerate}
 
Then the following integral is gauge-invariant, \emph{i.e.} it depends only on the oper $[\nabla]$ and is independent of the choice of quasi-canonical form:
\be I_r^\gamma([\nabla]) \coloneqq \int_\gamma \P(z)^{-r/h^\vee} v_r(z)dz.\nn\ee
This function is invariant under smooth deformations of the contour $\gamma$ which do not cross any pole of $v_r$ or any of the marked points $\{z_i\}_{n=1}^\8$. \qed
\end{corollary}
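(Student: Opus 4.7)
The plan is to use the identity, recorded in the text just before the corollary, that under the gauge transformation \eqref{vuptof} of $v_r$ one has $\P^{-r/h^\vee} v_r \longmapsto \P^{-r/h^\vee} v_r - \partial_z\bigl(\P^{-r/h^\vee} f_r\bigr)$, and then to treat the two claims separately. For gauge invariance, Theorem \ref{thm: quasi-canonical form} asserts that any two quasi-canonical representatives of $[\nabla]$ differ by a transformation of the form $\exp\bigl(\sum_{j \in E_{\geq 2}} f_j p_j\bigr)$, so the difference of the corresponding values of $I_r^\gamma$ equals $-\int_\gamma \partial_z\bigl(\P^{-r/h^\vee} f_r\bigr)\, dz$. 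The key technical step is to show that $f_r$ must itself be holomorphic along $\gamma$: if $f_r$ had a pole of order $n \geq 1$ at some $p \in \gamma$, then $f_r'$ would have a pole of order $n+1$ at $p$, whereas $(r\varphi/h^\vee) f_r$ would have a pole of order at most $n$, since $\varphi$ is regular on $X = \CC \setminus \{z_i\}_{i=1}^N$; these singular parts cannot cancel, contradicting hypothesis (3) applied to both representatives. Combined with condition (2), $\P^{-r/h^\vee} f_r$ then admits a single-valued holomorphic branch along $\gamma$, and its total $z$-derivative integrates to zero around the closed contour (1) by the fundamental theorem of calculus.

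For deformation invariance, fix any quasi-canonical representative and consider the form $\omega \coloneqq \P(z)^{-r/h^\vee} v_r(z)\, dz$, a multivalued holomorphic $1$-form on $X' \coloneqq X \setminus \{\text{poles of } v_r\}$; every holomorphic $1$-form on a complex curve is automatically closed. Given a smooth homotopy $H : [0,1] \times S^1 \to X'$ from $\gamma$ to a deformed contour $\gamma'$, the branch of $\P^{-r/h^\vee}$ chosen along $\gamma$ extends uniquely and continuously along each $\gamma_t \coloneqq H(t,\cdot)$, since the branch points of $\P^{-r/h^\vee}$ lie at the marked points $\{z_i\}$, which the homotopy avoids by hypothesis. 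Pulling $\omega$ back to the cylinder in this extended branch and applying Stokes' theorem yields $I_r^\gamma([\nabla]) = I_r^{\gamma'}([\nabla])$.

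The most delicate point throughout is the coherent branch continuation under deformation; conceptually it is exactly the statement that $\omega$ represents a class in a suitable twisted cohomology and $\gamma$ a class in the dual twisted homology, with the integral being their natural, representative-independent pairing---a picture developed coordinate-invariantly in \S\ref{sec: twisted homology}.
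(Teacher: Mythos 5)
Your proof is correct and follows essentially the same route the paper intends, treating the corollary as an immediate consequence of Theorem \ref{thm: quasi-canonical form} together with the rewriting of \eqref{vuptof} as $\P^{-r/h^\vee} v_r \mapsto \P^{-r/h^\vee} v_r - \partial_z\big(\P^{-r/h^\vee} f_r\big)$: the change of integrand is an exact differential of a function admitting a single-valued branch along the closed contour, hence integrates to zero, and deformation invariance is Cauchy/Stokes for the holomorphic integrand. Your additional check that $f_r$ is automatically pole-free along $\gamma$ (by comparing the pole orders of $f_r'$ and $(r\varphi/h^\vee)f_r$ at a point of $X$) is a worthwhile detail that the paper leaves implicit.
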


\section{Miura $\lg$-opers and the Bethe equations} \label{sec: Miura opers}

\subsection{A class of Miura $\lg$-opers} \label{sec: class of Miura opers}

Following \cite{Fopersontheprojectiveline}, we define a \emph{Miura $\lg$-oper} as a connection of the form
\begin{equation} \label{g-oper with twist}
\nabla \coloneqq d + p_{-1} dz + u \, dz \in \op_{\lg}(\CP)
\end{equation}
where $u \in \null^L \h(\M) = \h^\ast(\M)$, using the natural identification $\null^L \h = \h^\ast$.
Let
$\MOp_{\lg}(\CP)$ 
denote the affine space of all Miura $\lg$-opers. Given a Miura $\lg$-oper $\nabla \in \MOp_{\lg}(\CP)$ we refer to its class $[\nabla] \in \Op_{\lg}(\CP)$ as the underlying $\lg$-oper.

Recall the twist function $\varphi \in \M$ defined in \eqref{twist function}. Given any choice of normalised derivation element $\Lambda$ of $\lh$, cf. \S\ref{sec: def oper}, we introduce the affine subspace
\begin{equation} \label{MOp with twist}
\MOp_{\lg}(\CP)^\varphi \coloneqq d + p_{-1} dz - \Lambda \varphi dz + \lh'(\M) dz
\end{equation}
of $\MOp_{\lg}(\CP)$ where $\lh'$ is the span of the simple roots $\{ \alpha_i \}_{i=0}^\ell$. It follows from the first part of the proof of Lemma \ref{lem: Lambda indep} that $\MOp_{\lg}(\CP)^\varphi$ is independent of the choice of normalised derivation $\Lambda$.

In this paper we shall be interested in Miura $\lg$-opers \eqref{g-oper with twist} where the meromorphic $\h^\ast$-valued function $u \in \h^\ast(\M)$ has at most simple poles. Fix a collection of weights $\lambda_1, \ldots, \lambda_N \in \h^\ast$. We shall, more specifically, be interested in the case when $u$ has a simple pole at each marked point $z_i$, $i =1, \ldots, N$, with residue $-\lambda_i \in \lh$. We will furthermore allow the function $u$ to have simple poles at some additional $m \in \ZZ_{\geq 0}$ marked points $w_j$, $j = 1, \ldots, m$, with residues there given by simple roots $\alpha_{c(j)}$, for some function $c : \{ 1, \ldots, m\} \to I= \{ 0,\ldots, \ell \}$. In other words, we shall consider Miura $\lg$-opers of the form \cite{Fopersontheprojectiveline, FFsolitons}
\begin{equation} \label{u Miura op def}
\nabla = d + p_{-1} dz - \sum_{i=1}^N \frac{\lambda_i}{z - z_i} dz + \sum_{j=1}^m \frac{\alpha_{c(j)}}{z - w_j} dz.
\end{equation}
The residue of $\nabla$ at infinity is the weight $\lambda_\infty \coloneqq \sum_{i=1}^N \lambda_i - \sum_{j=1}^m \alpha_{c(j)} \in \lh$.

Decomposing each weight $\lambda_i \in \h^\ast$ with respect to the basis $\{ \alpha_i \}_{i=1}^\ell \cup \{ \rho, \delta \}$, we may write it as
\begin{equation} \label{hw lambda i}
\lambda_i = \dot{\lambda}_i + \frac{k_i}{h^\vee} \rho - \Delta_i \delta
\end{equation}
for some $\dot{\lambda}_i \in \dot\h^\ast \coloneqq \textup{span}_{\CC} \{ \alpha_j \}_{j=1}^\ell$, $k_i \coloneqq \langle \lambda_i, \cent \rangle \in \CC$ and $\Delta_i \coloneqq -\langle \lambda_i, \cocent \rangle \in \CC$. Since $\dot\lambda_i$, $\delta$ and the simple roots $\alpha_{c(j)}$ all lie in $\lh'$, it follows that $\nabla$ belongs to the space $\MOp_{\lg}(\CP)^\varphi$ with the twist function $\varphi$ defined as in \eqref{twist function} in terms of the $k_i$, $i=1,\ldots, N$.

\subsection{Regular points} \label{sec: regular points}

Let $\M^{\rm reg}_x$ be the $\CC$-algebra of meromorphic functions on $\CP$ which are holomorphic at $x$.
We shall say that an $\lg$-connection $\nabla = d + p_{-1} dz + b \, dz$ in $\op_{\lg}(\CP)$ is \emph{regular} at a point $x \in \CC$ if in fact $b \in \lhb_+(\M^{\rm reg}_x)$, \emph{i.e.} $b$ has no pole at $x$. Let $\op^{\rm reg}_{\lg}(\CP)_x$ denote the set of all such $\lg$-connections. It is stabilised by the action of the subgroup $\lhN_+(\M^{\rm reg}_x) \subset \lhN_+(\M)$ on $\op_{\lg}(\CP)$ by gauge transformations. In particular, we can define the quotient space
\begin{equation*}
\Op^{\rm reg}_{\lg}(\CP)_x \coloneqq \op^{\rm reg}_{\lg}(\CP)_x \big/ \lhN_+(\M^{\rm reg}_x).
\end{equation*}
If $x$ is not a pole of the twist function $\varphi$ we may similarly define the space $\op^{\rm reg}_{\lg}(\CP)^\varphi_x$ of $\lg$-connections of the form $\nabla = d + p_{-1} dz - \Lambda \varphi dz + b' dz$ where $\Lambda$ is a normalised derivation element of $\lh$ and $b' \in \lhb'_+(\M^{\rm reg}_x)$. We then also define
\begin{equation*}
\Op^{\rm reg}_{\lg}(\CP)^\varphi_x \coloneqq \op^{\rm reg}_{\lg}(\CP)^\varphi_x \big/ \lhN_+(\M^{\rm reg}_x).
\end{equation*}

\begin{lemma}
For each $x\in \CC$ there is a canonical injection
\begin{equation} \label{Op reg to Op}
\Op^{\rm reg}_{\lg}(\CP)_x \longhookrightarrow \Op_{\lg}(\CP).
\end{equation}
When $x$ is not a pole of $\varphi$ there is a canonical injection $\Op^{\rm reg}_{\lg}(\CP)^{\varphi}_x \into \Op_{\lg}(\CP)^{\varphi}$.
\begin{proof}
Since $\lhN_+(\M^{\rm reg}_x) \subset \lhN_+(\M)$ we certainly have a well-defined canonical map $\Op^{\rm reg}_{\lg}(\CP)_x \to \Op_{\lg}(\CP)$.
Suppose that two $\lg$-connections $\nabla, \nabla' \in \op^{\rm reg}_{\lg}(\CP)_x$, regular at $x$, define the same class $[\nabla] = [\nabla']$ in $\Op_{\lg}(\CP)$. We must show that they also define the same class in $\Op^{\rm reg}_{\lg}(\CP)_x$.

Applying the procedure in the first half of the proof of Theorem \ref{thm: quasi-canonical form} to both of the $\lg$-connections $\nabla, \nabla' \in \op^{\rm reg}_{\lg}(\CP)_x$, with $\M$ there replaced by $\M^{\rm reg}_x$, we find that they can each be brought to a quasi-canonical form which is regular at $x$ using a gauge transformation in $\lhN_+(\M^{\rm reg}_x)$. On the other hand, by the argument in the second half of the proof of Theorem \ref{thm: quasi-canonical form} with $\M$ there replaced by $\M^{\rm reg}_x$, we also deduce that these two quasi-canonical forms are related by a gauge transformation in $\exp(\hat \a_{\geq 2}(\M^{\rm reg}_x))$. It now follows that $\nabla$ and $\nabla'$ define the same class in $\Op^{\rm reg}_{\lg}(\CP)_x$.
\end{proof}
\end{lemma}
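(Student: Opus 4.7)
The plan is to construct the map, check it descends, and then obtain injectivity by lifting to quasi-canonical form and analysing the residual gauge freedom. Well-definedness is immediate: since $\lhN_+(\M^{\rm reg}_x) \subset \lhN_+(\M)$, any gauge equivalence of representatives over $\M^{\rm reg}_x$ is in particular a gauge equivalence over $\M$, so the natural assignment on representatives descends to a map of quotients, and similarly in the twisted case.

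For injectivity of $\Op^{\rm reg}_{\lg}(\CP)^\varphi_x \to \Op_{\lg}(\CP)^\varphi$, let $\nabla, \nabla' \in \op^{\rm reg}_{\lg}(\CP)^\varphi_x$ represent the same class downstairs. The first half of the proof of Theorem \ref{thm: quasi-canonical form} is an explicit grade-by-grade recursion that produces a gauge transformation bringing a connection to quasi-canonical form; at each grade, the new $m_n$ is obtained by inverting the isomorphism $\ad_{p_{-1}}\colon \c_{n+1} \xrightarrow{\sim} \c_n$ applied to an algebraic expression in the preceding $m_k$'s and the coefficients of the original $\nabla$. Since all input data lie in $\M^{\rm reg}_x$, and $\M^{\rm reg}_x$ is closed under sums, products, and derivatives, the entire construction stays in $\M^{\rm reg}_x$. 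Hence $\nabla$ and $\nabla'$ admit quasi-canonical representatives $\wt\nabla, \wt\nabla'$, both regular at $x$, obtained via gauge transformations in $\lhN_+(\M^{\rm reg}_x)$.

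The quasi-canonical representatives $\wt\nabla, \wt\nabla'$ remain gauge-equivalent in $\Op_{\lg}(\CP)^\varphi$, so by the uniqueness statement of Theorem \ref{thm: quasi-canonical form} they differ by $g = \exp(f)$ for some $f = \sum_{j \in E_{\geq 2}} f_j p_j \in \hat\a_{\geq 2}(\M)$, with $\wt\nabla' = \wt\nabla - d^\varphi f$. The main obstacle — the only place the regularity of $\varphi$ at $x$ is essentially used — is to show that each $f_j$ is in fact regular at $x$. The $p_j$-component of the defining equation reads
\begin{equation*}
v'_j(z) - v_j(z) = -f'_j(z) + \tfrac{j\varphi(z)}{h^\vee} f_j(z),
\end{equation*}
where the left-hand side is regular at $x$ by construction. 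Since $\varphi$ is also regular at $x$ by hypothesis, any pole of $f_j$ at $x$ of order $k \geq 1$ would produce a pole of order $k+1$ in $f'_j$, which cannot be cancelled by $(j\varphi/h^\vee) f_j$ (a pole of order at most $k$); this contradicts regularity of the left-hand side. Hence each $f_j \in \M^{\rm reg}_x$, so $g \in \lhN_+(\M^{\rm reg}_x)$.

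Composing the three gauge transformations — all now lying in $\lhN_+(\M^{\rm reg}_x)$ — exhibits $\nabla$ and $\nabla'$ as equivalent in $\Op^{\rm reg}_{\lg}(\CP)^\varphi_x$, establishing injectivity in the twisted case. The untwisted statement is handled by the same argument using the quasi-canonical form for $\Op_{\lg}(\CP)$, with the further observation that any $\nabla \in \op^{\rm reg}_{\lg}(\CP)_x$ has its $\rho$-component, and hence the induced twist function appearing in its quasi-canonical form, automatically regular at $x$; the ODE argument then goes through verbatim.
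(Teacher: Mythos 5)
Your proof is correct and follows essentially the same route as the paper's: bring both connections to quasi-canonical form by the recursion of Theorem \ref{thm: quasi-canonical form} carried out over $\M^{\rm reg}_x$, then show the residual gauge parameter $\exp\big(\sum_{j\in E_{\geq 2}} f_j p_j\big)$ is itself regular at $x$. Your explicit pole-counting for the first-order equation $-f_j' + \tfrac{j\varphi}{h^\vee}f_j = {}$(regular), using that $\varphi$ (or, in the untwisted case, the $\rho$-component of a regular representative) has no pole at $x$, is precisely the content of the paper's appeal to ``the second half of the proof of Theorem \ref{thm: quasi-canonical form} with $\M$ replaced by $\M^{\rm reg}_x$'', so you have simply spelled out the step the paper leaves implicit.
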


We will identify $\Op^{\rm reg}_{\lg}(\CP)_x$ with its image in $\Op_{\lg}(\CP)$ under the injection \eqref{Op reg to Op}. We then say that an $\lg$-oper $[\nabla] \in \Op_{\lg}(\CP)$ is \emph{regular} at $x \in \CP$ if it lies in $\Op^{\rm reg}_{\lg}(\CP)_x$. More concretely, this means that there exists a representative of the class $[\nabla]$ in $\op_{\lg}^{\rm reg}(\CP)_x$, \emph{i.e.} which has no pole at $x$. 

Recall the set $X = \CC \setminus \{ z_i \}_{i=1}^N$ introduced in \S\ref{sec: twisted homology coord}.
We define the space of \emph{$\lg$-opers regular on $X$} as
\begin{equation*}
\Op^{\rm reg}_{\lg}(\CP)_X \coloneqq \bigcap_{x \in X} \Op^{\rm reg}_{\lg}(\CP)_x \subset \Op_{\lg}(\CP).
\end{equation*}
Since the twist function has no poles in $X$, we may also define the space of \emph{$\lg$-opers with twist function $\varphi$ regular on $X$} as
\begin{equation*}
\Op^{\rm reg}_{\lg}(\CP)^\varphi_X \coloneqq \bigcap_{x \in X} \Op^{\rm reg}_{\lg}(\CP)^\varphi_x \subset \Op_{\lg}(\CP)^\varphi.
\end{equation*}

\subsection{Bethe equations}


\begin{proposition} \label{prop: Miura oper BAE}
Let $x \in X$ and suppose $\nabla \in \MOp_{\lg}(\CP)^\varphi$ has the form
\begin{equation*}
\nabla = d + \Big( p_{-1} - {h^\vee}^{-1} \varphi \, \rho + \frac{\alpha_i}{z - x} + r \Big) dz
\end{equation*}
for some simple root $\alpha_i$, $i \in I$, where $r \in \lh'(\M)$ is regular at $x$. Then $[\nabla]$ is regular at $x$, \emph{i.e.} there is a representative of $[\nabla]$ which is regular at $x$, if and only if
\begin{equation} \label{affine BAE}
h^\vee \langle r(x), \check\alpha_i \rangle = \varphi(x).
\end{equation}
\begin{proof}
Suppose first that $\wt\nabla$ is a representative of $[\nabla]$ which is regular at $x$. Then the gauge transformation parameter $g \in \lhN_+(\M)$ determined by following the recursive procedure of Theorem \ref{thm: quasi-canonical form} is of the form $g = \exp(-\chi \delta) \exp(\sum_{n>0} m_n)$ where $\chi \in \M^{\rm reg}_x$ and $m_n \in \c_n(\M^{\rm reg}_x)$, $n > 0$ are all regular at $x$. Therefore, the quasi-canonical form $\wt\nabla^g$ of $[\nabla]$ is regular at $x$. Then, in particular, its component in $\a_1$ must be regular. Yet by Proposition \ref{prop: can form u1} the latter is proportional to (note that in the notation of Proposition \ref{prop: can form u1} we have $u_0 = \frac{\alpha_i}{z - x} + r$ and $u_1 = 0$ in the present case)
\begin{equation*}
\frac{(- \alpha_i | - \alpha_i + 2 \rho)}{2(z-x)^2} dz + \frac{(\alpha_i | r(x)) - {h^\vee}^{-1} \varphi(x) (\alpha_i | \rho)}{z - x} dz + \ldots
\end{equation*}
where the dots represent terms regular at $z = x$. Recalling that $\langle \rho, \check \alpha_i \rangle = 1$ for all $i \in I$, and in view of \eqref{bilinear form def}, we see that the double pole term here vanishes and the simple pole term vanishes only if the equation \eqref{affine BAE} holds. 

Conversely, suppose \eqref{affine BAE} holds. Let $g = \exp \big( \! -\frac{1}{z-x} \check e_i \big)$. For all $u \in \lh(\M)$ we have
\begin{align*}
(dg) g^{-1} &= \check e_i \frac{dz}{(z-x)^2}, \qquad
g u g^{-1} = u - \frac{1}{z-x} [\check e_i, u] = u + \frac{\langle u, \check \alpha_i \rangle}{z-x} \check e_i,\\
g p_{-1} g^{-1} &= p_{-1} - \frac{1}{z-x} [\check e_i, p_{-1}] + \frac{1}{2(z-x)^2} \big[ \check e_i, [\check e_i, p_{-1}] \big] = p_{-1} - \frac{\alpha_i}{z-x} - \frac{\check e_i}{(z-x)^2}.
\end{align*}
Therefore, with $u = - {h^\vee}^{-1} \varphi\, \rho + \frac{\alpha_i}{z-x} + r$ we find
\begin{align*}
\nabla^g &= d - (dg) g^{-1} + g p_{-1} g^{-1} dz + g u g^{-1} dz\\
&= d + \Big( p_{-1} - {h^\vee}^{-1} \varphi\, \rho + r(z) + \frac{\langle r(z), \check \alpha_i \rangle - {h^\vee}^{-1} \varphi(z)}{z-x} \check e_i \Big) dz.
\end{align*}
(The coefficient of the $(z-x)^{-2}$ term is $-1-1+\langle \alpha_i,\check\alpha_i\rangle=0$.)
This is regular at $x$ by virtue of \eqref{affine BAE}, so the $\lg$-oper $[\nabla] = [\nabla^g]$ is regular at $x$.
\end{proof}
\end{proposition}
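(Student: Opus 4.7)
My plan is to prove the two implications separately.

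For the forward direction, suppose $[\nabla]$ admits a representative $\tilde\nabla \in \op^{\rm reg}_{\lg}(\CP)^\varphi_x$. I would run the recursive gauge-fixing procedure from the proof of Theorem \ref{thm: quasi-canonical form}, but working entirely over the local ring $\M^{\rm reg}_x$ (this is exactly what is allowed by the regularity lemma for $\Op^{\rm reg}_{\lg}(\CP)^\varphi_x \hookrightarrow \Op_{\lg}(\CP)^\varphi$, already established in the text). This produces a quasi-canonical form of $[\nabla]$ whose coefficient functions $v_j$ are all regular at $x$. In particular $v_1$ must be regular at $x$. I would then invoke Proposition \ref{prop: can form u1} with $u_0 = \tfrac{\alpha_i}{z-x} + r$ and $u_1 = 0$, and expand the resulting expression $\tfrac12(u_0|u_0) + (\rho|u'_0) - (h^\vee)^{-1}\varphi(\rho|u_0)$ in Laurent series around $x$. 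The double-pole term is $\tfrac12(\alpha_i|\alpha_i) + \langle-\alpha_i,\rho\rangle = \tfrac12(\alpha_i|\alpha_i)-1$, which vanishes using \eqref{bilinear form def} and $\langle\rho,\check\alpha_i\rangle = 1$; the residue at $x$ is $(\alpha_i|r(x)) - (h^\vee)^{-1}\varphi(x)(\alpha_i|\rho)$, and this must vanish, which, after translating $(\alpha_i|\,\cdot\,)$ back to $\langle\,\cdot\,,\check\alpha_i\rangle$ via \eqref{bilinear form def}, gives exactly \eqref{affine BAE}.

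For the reverse direction, assume \eqref{affine BAE} holds. I would exhibit a specific gauge transformation that removes the pole at $x$, namely $g = \exp\!\bigl(-\tfrac{1}{z-x}\check e_i\bigr) \in \lhN_+(\M)$. A direct computation (using $[\check e_i, p_{-1}] = \alpha_i$, $[\check e_i,[\check e_i,p_{-1}]] = -2\check e_i$ via the Serre/Chevalley relations, and $[\check e_i, h] = -\langle h,\check\alpha_i\rangle\check e_i$ for $h \in \lh$) yields
\begin{align*}
(dg)g^{-1} &= \frac{\check e_i}{(z-x)^2}\,dz,\\
gp_{-1}g^{-1} &= p_{-1} - \frac{\alpha_i}{z-x} - \frac{\check e_i}{(z-x)^2},\\
gug^{-1} &= u + \frac{\langle u,\check\alpha_i\rangle}{z-x}\check e_i.
\end{align*}
Substituting $u = -(h^\vee)^{-1}\varphi\,\rho + \tfrac{\alpha_i}{z-x} + r$ and summing, the $(z-x)^{-2}$ coefficient of $\check e_i$ collapses (the three contributions sum to $-1-1+\langle\alpha_i,\check\alpha_i\rangle = 0$), the $\alpha_i/(z-x)$ term cancels, and the simple-pole coefficient of $\check e_i$ becomes $\langle r(z),\check\alpha_i\rangle - (h^\vee)^{-1}\varphi(z)$, whose residue at $x$ vanishes by \eqref{affine BAE}. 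Hence $\nabla^g \in \op^{\rm reg}_{\lg}(\CP)^\varphi_x$ and $[\nabla]$ is regular at $x$.

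I expect the only subtle point to be the forward direction, specifically the verification that the recursive quasi-canonical procedure of Theorem \ref{thm: quasi-canonical form} can indeed be carried out while staying inside $\M^{\rm reg}_x$ (so that the resulting $v_1$ is literally the $v_1$ appearing in a regular representative, rather than some meromorphic function with a potential pole at $x$ that happens to equal $v_1$ generically). This is really the content of the embedding lemma already proved in the text, but I would state explicitly why the recursion preserves regularity: at each step $m_{n+1}$ is obtained from $u_n + w_n$ by applying $(\ad_{p_{-1}}|_{\c_{n+1}})^{-1}$, which is a $\CC$-linear, and hence $\M^{\rm reg}_x$-linear, inverse. The remaining steps are mechanical Laurent-expansion and Chevalley computations.
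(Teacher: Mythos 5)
Your proposal is correct and follows essentially the same route as the paper: the forward direction extracts the gauge-invariant coefficient $v_1$ via Proposition \ref{prop: can form u1} and reads off the Bethe equation from the vanishing of its simple pole at $x$ (the double pole vanishing automatically since $2(\alpha_i|\rho)=(\alpha_i|\alpha_i)$), while the converse uses exactly the gauge transformation $g=\exp\bigl(-\tfrac{1}{z-x}\check e_i\bigr)$ with the same cancellations. Your explicit remark that the recursion of Theorem \ref{thm: quasi-canonical form} stays in $\M^{\rm reg}_x$ is precisely the regularity argument the paper invokes, so there is nothing missing.
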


\begin{remark} \label{rem: Miura oper BAE}
In the statement of Proposition \ref{prop: Miura oper BAE}, if we write $\nabla \in \op_{\lg}(\CP)^\varphi$ as
\begin{equation*}
\nabla = d + \Big( p_{-1} + \frac{\alpha_i}{z - x} + \wt r \Big) dz
\end{equation*}
where $\wt r \in \lh(\M)$ is regular at $x$, noting that $\varphi$ is regular at $x \in X$, then the Bethe equation \eqref{affine BAE} for the regularity of $[\nabla]$ at $x$ simply reads 
\begin{equation*} \langle \wt r(x), \check\alpha_i \rangle = 0.\qedhere\end{equation*}
\end{remark}

Recall the subset $X = \CC \setminus \{ z_i \}_{i=1}^N$ of $\CP$ introduced in \S\ref{sec: twisted homology coord}.

\begin{corollary} \label{cor: Bethe equations}
Let $\nabla \in \MOp_{\lg}(\CP)$ be of the form \eqref{u Miura op def}. We have $[\nabla] \in \Op^{\rm reg}_{\lg}(\CP)_X$ if and only if
\begin{equation} \label{Bethe equations}
- \sum_{i=1}^N \frac{(\lambda_i|\alpha_{c(j)})}{w_j - z_i} + \sum_{\substack{i =1\\ i \neq j}}^m \frac{(\alpha_{c(i)}|\alpha_{c(j)})}{w_j - w_i} = 0, \qquad j= 1, \ldots, m.
\end{equation}
We refer to these as the \emph{Bethe equations}.

In particular, when the Bethe equations hold then there exists a quasi-canonical representative of $[\nabla]$ in which the coefficient functions $v_i(z)$, $i\in E$, have no singularities at the Bethe roots $w_j$, $j=1,\dots,m$.
\begin{remark}
The Bethe equations \eqref{Bethe equations} coincide with those obtained in \cite{FFsolitons} and more generally in \cite{Fopersontheprojectiveline} for a general Kac-Moody algebra.
\end{remark}
\begin{proof}
The $\lg$-oper $[\nabla]$ is certainly regular away from the points $z_i$, $i =1, \ldots, N$ and $w_j$, $j=1, \ldots, m$, since the defining representative $\nabla$ in  \eqref{u Miura op def} is regular there.

And by Proposition \ref{prop: Miura oper BAE}, see also Remark \ref{rem: Miura oper BAE}, the $\lg$-oper $[\nabla]$ is also regular at each of the $w_j$ if and only if the $j^{\rm th}$ Bethe equation \eqref{Bethe equations} holds.
\end{proof}
\end{corollary}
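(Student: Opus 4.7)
\emph{Equivalence with the Bethe equations.} The representative given in \eqref{u Miura op def} is manifestly regular at every point of $X$ except possibly at the Bethe roots $w_j$, since the marked points $z_i$ lie outside $X$. To test regularity at a fixed $w_j$, I would cast $\nabla$ in the form of Remark \ref{rem: Miura oper BAE},
\[
\nabla = d + \Big( p_{-1} + \frac{\alpha_{c(j)}}{z - w_j} + \wt r_j \Big) dz,
\]
where $\wt r_j \in \lh(\M)$ collects the remaining terms ($-\sum_i \lambda_i/(z-z_i)$ together with $\sum_{i\ne j}\alpha_{c(i)}/(z-w_i)$) and is regular at $w_j$. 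By Remark \ref{rem: Miura oper BAE}, regularity of $[\nabla]$ at $w_j$ is then equivalent to the single condition $\langle \wt r_j(w_j), \check\alpha_{c(j)}\rangle = 0$. Evaluating this pairing explicitly and converting between the canonical pairing $\langle \cdot,\cdot\rangle$ and the invariant bilinear form $(\cdot|\cdot)$ via \eqref{bilinear form def}---an operation which rescales the whole equation by a common nonzero factor $a_{c(j)}/\check a_{c(j)}$ that can be cleared---produces exactly the $j$-th equation of \eqref{Bethe equations}.

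\emph{Regularity of the $v_i$ at the Bethe roots.} Assume now that the Bethe equations hold. I would first construct a single representative of $[\nabla]$ regular on all of $X$. The proof of Proposition \ref{prop: Miura oper BAE} supplies, at each $w_j$, an explicit gauge transformation $g_j = \exp\big( -\check e_{c(j)} /(z - w_j) \big) \in \lhN_+(\M)$ whose action removes the simple pole at $w_j$; this $g_j$ is itself regular on $X \setminus \{w_j\}$ and its action modifies the connection only by rational terms whose poles lie in $\{w_j\} \cup \{z_i\}_{i=1}^N$. Composing these gauge transformations over $j = 1, \ldots, m$ yields a representative $\nabla' \in \op_{\lg}(\CP)^\varphi$ of $[\nabla]$ that is regular on all of $X$.

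Next I would run the recursive procedure from the proof of Theorem \ref{thm: quasi-canonical form} on $\nabla'$, while tracking regularity. At each step the gauge parameter $m_{n+1} \in \c_{n+1}(\M)$ is determined by inverting the isomorphism $\ad_{p_{-1}} : \c_{n+1} \SimTo \c_n$ on a $\CC$-linear combination of iterated commutators of the previously constructed $m_k$ with components of $\nabla'$; since all the ingredients are regular on $X$ and $(\ad_{p_{-1}})^{-1}$ has constant coefficients, $m_{n+1}$ is regular on $X$ as well. The rectifying gauge transformation $\exp(-\chi p_1)$ used at the end of the first half of the proof of Theorem \ref{thm: quasi-canonical form} to absorb the residual $\delta$-component is built from the same data and is likewise regular on $X$. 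Consequently the resulting quasi-canonical form has every coefficient function $v_i$ regular on $X$, and in particular at each Bethe root. The only genuine subtlety I anticipate is bookkeeping: one must verify that the recursive procedure of Theorem \ref{thm: quasi-canonical form} restricts cleanly when $\M$ is replaced throughout by the subring of meromorphic functions regular on $X$. This is a functoriality-under-localization statement that simply extends to a finite set the single-point version already used in \S\ref{sec: regular points} in establishing the injection \eqref{Op reg to Op}.
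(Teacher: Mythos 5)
Your treatment of the equivalence with the Bethe equations is correct and is exactly the paper's argument: regularity is manifest away from the $w_j$, and at each $w_j$ one applies Proposition \ref{prop: Miura oper BAE} in the form of Remark \ref{rem: Miura oper BAE} with $x=w_j$, $i=c(j)$, the passage from $\langle\,\cdot\,,\check\alpha_{c(j)}\rangle$ to $(\,\cdot\,|\alpha_{c(j)})$ via \eqref{bilinear form def} being an overall nonzero rescaling of each equation.

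The second half, however, has a genuine gap. You assert that the composite of the gauge transformations $g_j=\exp\big(-\check e_{c(j)}/(z-w_j)\big)$ produces a representative $\nabla'$ regular on all of $X$, on the grounds that each $g_j$ modifies the connection only by terms with poles in $\{w_j\}\cup\{z_i\}_{i=1}^N$. That supporting claim is false: writing the $\lh$-valued part of \eqref{u Miura op def} as $u=\frac{\alpha_{c(1)}}{z-w_1}+\wt r_1$, the computation in Proposition \ref{prop: Miura oper BAE} gives $\nabla^{g_1}=d+\big(p_{-1}+\wt r_1+\tfrac{\langle \wt r_1,\check\alpha_{c(1)}\rangle}{z-w_1}\,\check e_{c(1)}\big)dz$, and the new $\check e_{c(1)}$-coefficient has simple poles at every other Bethe root $w_{j'}$ with $\langle\alpha_{c(j')},\check\alpha_{c(1)}\rangle\neq 0$. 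Moreover that computation is only valid for connections of Miura type, and applying $g_2$ to $\nabla^{g_1}$ does not regularise at $w_2$: setting $\wt r_{12}=u-\frac{\alpha_{c(1)}}{z-w_1}-\frac{\alpha_{c(2)}}{z-w_2}$, the $\check e_{c(2)}$-coefficient of $\nabla^{g_2g_1}$ is $\tfrac{\langle \wt r_{12},\check\alpha_{c(2)}\rangle}{z-w_2}$, whose residue at $w_2$ equals $-\langle\alpha_{c(1)},\check\alpha_{c(2)}\rangle/(w_2-w_1)$ by the Bethe equation at $w_2$, hence is nonzero whenever the two colours interact; the $\check e_{c(1)}$-coefficient likewise keeps its simple pole at $w_2$, and the $[\check e_{c(2)},\check e_{c(1)}]$-component acquires a double pole there. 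So $\nabla'$ is not regular on $X$ in general, and your (otherwise sound) observation that the recursion of Theorem \ref{thm: quasi-canonical form} preserves regularity has no valid input. The statement can be repaired by working at the level of quasi-canonical coefficients and the abelian residual gauge freedom: run the recursion on \eqref{u Miura op def} itself, so the resulting coefficients $v^{(0)}_i$ have poles only in $\{z_i\}\cup\{w_j\}$; for each $j$, Proposition \ref{prop: Miura oper BAE} plus the argument establishing \eqref{Op reg to Op} (the recursion over $\M^{\rm reg}_{w_j}$ applied to $\nabla^{g_j}$, together with the uniqueness part of Theorem \ref{thm: quasi-canonical form}) yields $f_{ij}\in\M$, for $i\in E_{\geq 2}$, with $v^{(0)}_i-f'_{ij}+\tfrac{i\varphi}{h^\vee}f_{ij}$ regular at $w_j$; replacing each $f_{ij}$ by its principal part at $w_j$ does not spoil this (since $\varphi$ is regular at $w_j$), and then the single residual gauge parameter $f_i=\sum_{j=1}^m f_{ij}$ makes $v_i$ regular at all Bethe roots simultaneously, while $v_1$ is already regular there by Theorem \ref{thm: Miura oper p1 coeff}.
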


Define the \emph{master function} to be
\begin{align} \label{Master function}
\Phi &\coloneqq \sum_{\substack{i, j=1\\ i < j}}^N (\lambda_i|\lambda_j) \log(z_i - z_j) - \sum_{i=1}^N \sum_{j=1}^m (\lambda_i|\alpha_{c(j)}) \log (z_i - w_j) \notag\\
&\qquad\qquad\qquad\qquad\quad + \sum_{\substack{i, j=1\\ i < j}}^m (\alpha_{c(i)}|\alpha_{c(j)}) \log (w_i - w_j).
\end{align}
It is a multivalued function on $\CC\setminus\{z_1,\dots,z_N,w_1,\dots,w_m\}$. One sees that the Bethe equations \eqref{Bethe equations} are given by
\begin{equation*}
\frac{\partial \Phi}{\partial w_j} = 0, \qquad j = 1, \ldots, m. 
\end{equation*}
Moreover it is known -- see Appendix \ref{sec: hyp arr} for a brief review -- that the eigenvalues of the quadratic Hamiltonians \eqref{quad Ham intro} are given in terms of the partial derivates $\partial \Phi / \partial z_i$.

The following result shows that the partial derivatives of the master function can be read off from the $\lg$-oper underlying the Miura $\lg$-oper $\nabla$ of \eqref{u Miura op def}.

\begin{theorem} \label{thm: Miura oper p1 coeff}
Let $\nabla \in \MOp_{\lg}(\CP)$ be a Miura oper of the form \eqref{u Miura op def}. The coefficient of $p_1$ in any quasi-canonical form of the underlying $\lg$-oper $[\nabla] \in \Op_{\lg}(\CP)$ is 
\begin{equation*}
\frac{1}{h^\vee} \Bigg( \sum_{i=1}^N \frac{\ha (\lambda_i | \lambda_i + 2 \rho)}{(z- z_i)^2} + \sum_{i=1}^N \frac{\partial \Phi / \partial z_i}{z - z_i} + \sum_{j=1}^m \frac{\partial \Phi / \partial w_j}{z - w_j} \Bigg) dz.
\end{equation*}
\begin{proof}

Let us write the Miura $\lg$-oper in \eqref{u Miura op def} as $\nabla = d + p_{-1} dz + u(z) dz$ with
\begin{equation*}
u(z) = \frac{\alpha_{c(j)}}{z - w_j} + \wt r(z), \qquad
\wt r(z) \coloneqq - \sum_{i=1}^N \frac{\lambda_i}{z - z_i} + \sum_{\substack{i=1\\ i \neq j}}^m \frac{\alpha_{c(i)}}{z - w_i}.
\end{equation*}
The result follows from a direct computation, using the expression given in Remark \ref{rem: can form u1} for the coefficient of $p_1$ in any quasi-canonical form of the $\lg$-oper $[\nabla]$, with $\wt u_0 = u$ given above and $\wt u_1 = 0$. Explicitly, we find on the one hand
\begin{equation*}
\ha (u(z)|u(z)) = \frac{1}{2} \sum_{i=1}^N \frac{(\lambda_i|\lambda_i)}{(z - z_i)^2} + \frac{1}{2} \sum_{j=1}^m \frac{(\alpha_{c(j)}| \alpha_{c(j)})}{(z - w_j)^2} + \sum_{i=1}^N \frac{\partial \Phi / \partial z_i}{z - z_i} + \sum_{j=1}^m \frac{\partial \Phi / \partial w_j}{z - w_j},
\end{equation*}
where the derivatives of the master function \eqref{Master function} with respect to the variables $z_i$, $i = 1, \ldots, N$ and $w_j$, $j = 1, \ldots, m$ read
\begin{equation*}
\frac{\partial \Phi}{\partial z_i} = \sum_{\substack{j=1\\ j \neq i}}^N \frac{(\lambda_i|\lambda_j)}{z_i - z_j} - \sum_{j=1}^m \frac{(\lambda_i|\alpha_{c(j)})}{z_i - w_j}, \qquad
\frac{\partial \Phi}{\partial w_j} = - \sum_{i=1}^N \frac{(\lambda_i|\alpha_{c(j)})}{w_j - z_i} + \sum_{\substack{i =1\\ i \neq j}}^m \frac{(\alpha_{c(i)}|\alpha_{c(j)})}{w_j - w_i}.
\end{equation*}
On the other hand, we also have
\begin{equation*}
(\rho|u'(z)) = \sum_{i=1}^N \frac{(\rho|\lambda_i)}{(z - z_i)^2} - \sum_{j=1}^m \frac{(\rho| \alpha_{c(j)})}{(z - w_j)^2}.
\end{equation*}
Adding the above and using the fact that $2 (\alpha_i | \rho) = (\alpha_i | \alpha_i)$ for any simple root $\alpha_i$ (since $\langle \rho, \check\alpha_i \rangle = 1$) we obtain the result.
\end{proof}
\end{theorem}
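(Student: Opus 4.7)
The plan is to apply the closed-form expression for the coefficient $v_1$ of $p_1$ given in Remark \ref{rem: can form u1}, which reads $v_1 dz = (h^\vee)^{-1}\bigl(\ha(\wt u_0|\wt u_0) + (\rho|\wt u_0') + (p_{-1}|\wt u_1)\bigr)dz$ for any representative $\nabla = d + p_{-1}dz + \sum_{n\geq 0}\wt u_n dz$. For the Miura $\lg$-oper \eqref{u Miura op def}, everything lives in $\lh = \lg_0$, so $\wt u_0 = u(z)$ (the full $\lh$-valued function in \eqref{u Miura op def}) and $\wt u_n = 0$ for all $n \geq 1$. The formula then collapses to $v_1 dz = (h^\vee)^{-1}\bigl(\ha(u|u) + (\rho|u')\bigr)dz$, reducing the theorem to a direct computation.

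Next I would expand $\ha(u|u)$ as a double sum over all pairs of simple poles of $u$ at the points $\{z_i\}$ and $\{w_j\}$. The diagonal pairs produce the double-pole terms $\ha(\lambda_i|\lambda_i)/(z-z_i)^2$ and $\ha(\alpha_{c(j)}|\alpha_{c(j)})/(z-w_j)^2$, while each off-diagonal pair $\frac{1}{(z-a)(z-b)}$ splits via the partial fraction identity $\frac{1}{(a-b)}\bigl(\frac{1}{z-a} - \frac{1}{z-b}\bigr)$ into simple poles. Collecting the residue at each $z_i$ (respectively $w_j$) of the resulting simple-pole sum yields exactly the combinations $\partial\Phi/\partial z_i$ and $\partial\Phi/\partial w_j$ read off from the definition \eqref{Master function} of the master function.

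The term $(\rho|u')$ contributes only double poles: $(\rho|\lambda_i)/(z-z_i)^2$ at each $z_i$ and $-(\rho|\alpha_{c(j)})/(z-w_j)^2$ at each $w_j$. Adding these to the diagonal contributions of $\ha(u|u)$, the double-pole coefficient at $z_i$ becomes $\ha(\lambda_i|\lambda_i) + (\rho|\lambda_i) = \ha(\lambda_i|\lambda_i + 2\rho)$, matching the claim, while the double-pole coefficient at each Bethe root $w_j$ becomes $\ha(\alpha_{c(j)}|\alpha_{c(j)}) - (\rho|\alpha_{c(j)})$.

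The one substantive point, and the main obstacle to the otherwise routine calculation, is checking that the double poles at the Bethe roots actually cancel: this requires the identity $2(\alpha_i|\rho) = (\alpha_i|\alpha_i)$ for every simple root $\alpha_i$, $i \in I$. This follows from the bilinear form definition \eqref{bilinear form def} applied on $\lh$ (where simple coroots are the $\alpha_i$ and simple roots are the $\check\alpha_i$), combined with $\langle\check\alpha_i,\rho\rangle = 1$ and $A_{ii}=2$. Once this cancellation is verified, the assembled expression matches the claimed formula term-for-term.
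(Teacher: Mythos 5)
Your proposal is correct and follows essentially the same route as the paper's proof: apply the formula of Remark \ref{rem: can form u1} with $\wt u_0 = u$ and $\wt u_1 = 0$, expand $\ha(u|u)$ by partial fractions to recover the residues $\partial\Phi/\partial z_i$ and $\partial\Phi/\partial w_j$, and cancel the double poles at the Bethe roots using $2(\alpha_i|\rho) = (\alpha_i|\alpha_i)$. The only cosmetic difference is that you spell out the partial-fraction bookkeeping and the verification of $2(\alpha_i|\rho)=(\alpha_i|\alpha_i)$ from \eqref{bilinear form def}, which the paper leaves implicit.
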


\section{Conjectures on affine Gaudin Hamiltonians} \label{sec: main conj}

Before turning to the affine case, let us recall some features of the situation in finite types. 
When $\g$ is a Kac-Moody algebra of finite type, the \emph{quantum Gaudin algebra} is a commutative subalgebra of $U(\g^{\oplus N})$ generated by the coefficients in the partial fraction decompositions of a finite collection of $U(\g^{\oplus N})$-valued meromorphic functions $S_k(z)$, indexed by the exponents $k\in\bar E$. The $S_k(z)$ have poles at the marked points $z_1,\dots,z_N$. They commute amongst themselves and with the diagonal action of $\g$. In particular, $1\in \bar E$, and the explicit form of $S_1(z)$ is 
\begin{equation} \label{S1 def finite}
S_1(z) = \sum_{i=1}^N \frac{\C^{(i)}}{(z - z_i)^2} + \sum_{i=1}^N \frac{\H_i}{z - z_i},
\end{equation}
where the $\H_i$ are the quadratic Hamiltonians in \eqref{quad Ham intro} and where $\C^{(i)}$ is the copy of the Casimir element $\C\in U(\g)^\g$ in the $i^{\rm th}$ tensor factor of $U(\g^{\oplus N})$. More generally, the pole terms of highest order in each $S_k(z)$ are $\sum_{i=1}^N \C_{k+1}^{(i)}\big/(z - z_i)^{k+1}$, where $\C_{k+1}\in U(\g)^\g$ is a central element -- as indeed it must be for $S_k(z)$ to commute with the diagonal action of $\g$. Each $S_k(z)$ has degree $k+1$ as an element of $U(\g^{\oplus N})$.

As we sketched in the introduction, to each Miura $\lg$-oper of the form \eqref{mop}, with the Bethe roots $w_i$ obeying the Bethe equations, there corresponds a joint eigenvector $\psi$ of the functions $S_k(z)$, and the joint eigenvalues are given by the coefficients $\bar v_k(z)$ of the $\bar p_k$ in the canonical form of the underlying $\lg$-oper.

Now, not all these features can be precisely preserved in the affine case. Indeed, in finite types the centre $U(\g)^\g$ is isomorphic (via the Harish-Chandra isomorphism) to a graded polynomial algebra $\CC[\{\C_{k+1}\}_{k\in \bar E}]$ in $\rank\g$ generators of the correct degrees. But in affine types the centre is much smaller. Namely, the centre of the (completed, as in \S\ref{sec: ao} below) envelope of an affine Kac-Moody algebra is isomorphic to the graded polynomial algebra in only two generators, $\cent$ and $\C$ (of degrees $0$ and $2$; the definition of $\C$ in the affine case is in \eqref{Omega Kac} below) \cite{CIcentre}. Thus, one should not expect to find meromorphic functions $S_k(z)$, indexed by the positive exponents $k\in E$, such that they commute with the diagonal action of $\g$ for each $z\in X= \CC\setminus\{z_1,\dots,z_N\}$ \emph{and} have degrees $k+1$.\footnote{That would be impossible in any type with an even exponent, since any polynomial in $\cent$ and $\C$ has degree $k+1$ with $k$ odd; in other types these considerations merely make it seem unnatural.} 

This is consistent with the results in \S\ref{sec: opers} -- \S\ref{sec: Miura opers} above: we saw in Theorem \ref{thm: quasi-canonical form} that the coefficients $v_k(z)$ of the quasi-canonical form of an $\lg$-oper are defined, for $k\in E_{\geq 2}$, only up to the addition of twisted derivatives. So they themselves are not good candidates for the eigenvalues of such would-be generating functions. But we also saw that there are well-defined  functions on the space of opers given by integrals, as in Corollary \ref{cor: opint}. It is natural to think that these functions are the eigenvalues of higher Gaudin Hamiltonians. That in turn suggests  that such Hamiltonians are \emph{themselves} given by such integrals. This is the content of Conjecture \ref{conj: higher Ham} below. 
To state it, we must define an appropriate completion of $U(\g^{\oplus N})$ when $\g$ is of untwisted affine type.

\subsection{Completion of $U(\g^{\oplus N})$}
\label{sec: ao}
Let $\g = \g(A)$ be an untwisted affine Kac-Moody algebra as in \S\ref{sec: Cartan data}. Let $\dot A = (A_{ij})_{i,j=1}^\ell$ denote the Cartan matrix of finite type obtained from the Cartan matrix $A$ of affine type by removing the $0^{\rm th}$ row and column, and $\dot\g\coloneqq \g(\dot A)$ the corresponding finite-dimensional simple Lie algebra.
The Lie algebra $\g$ can be realised as the semi-direct product $\hat{\mathcal L}\dot \g \rtimes \CC t \del_t$ of the central extension $\hat{\mathcal L}\dot \g \cong_\CC\mathcal L \dot \g \oplus \CC \cent$ 
of the loop algebra $\mathcal L \dot \g \coloneqq \dot\g[t, t^{-1}]$ with derivation element $\cocent$ acting as the derivative $t \del_t$ in the formal loop variable $t$. In what follows we shall identify $\g$ with $\hat{\mathcal L}\dot \g \rtimes \CC t \del_t$.

Let $\g^{\oplus N}$ denote the $N$-fold direct sum of $\g$. We denote by $X^{(i)}$ the copy of any $X \in \g$ in the $i^{\rm th}$ summand, for $i = 1, \ldots, N$.
Consider the left ideals $\mathcal I_n \coloneqq U(\g^{\oplus N}) (t^n\dot\g[t])^{\oplus N}$, for $n \in \ZZ_{\geq 0}$, of the universal enveloping algebra $U(\g^{\oplus N})$. They define a descending $\ZZ_{\geq 0}$-filtration on $U(\g^{\oplus N})$, that is to say we have $\mathcal I_0 \supset \mathcal I_1 \supset \mathcal I_2 \supset \ldots$ with $\cap_{n \geq 0} \mathcal I_n = \{ 0 \}$. Define the corresponding completion of $U(\g^{\oplus N})$ as the inverse limit
\begin{equation*}
\hat U(\g^{\oplus N}) \coloneqq \varprojlim_n U(\g^{\oplus N}) / \mathcal I_n.
\end{equation*}
By definition, an element of $\hat U(\g^{\oplus N})$ is a possibly infinite sum 
\be x = \sum_{m\geq 0} x_m\label{cex}\ee 
of elements in $U(\g^{\oplus N})$, with $x_m \in \mathcal I_m$ for all $m > 0$ so that only finitely many terms contribute when one works modulo any $\mathcal I_n$.
Since the $\mathcal I_n$, $n \geq 0$ are only left ideals, the quotients $U(\g^{\oplus N}) / \mathcal I_n$ are not associative algebras. However, the multiplication in $U(\g^{\oplus N})$ is continuous with respect to the linear topology whose basis of open neighbourhoods for $0$ is $\{ \mathcal I_n \}_{n \geq 0}$. So the completion $\hat U(\g^{\oplus N})$ is an associative algebra. 

The tensor product $\bigotimes_{j=1}^N L_{\lambda_j}$ of irreducible $\g$-modules is \emph{smooth} as a module over  $U(\g^{\oplus N})$, meaning that for every $v\in \bigotimes_{j=1}^N L_{\lambda_j}$ there exists $n\in \ZZ_{\geq 0}$ such that $\mathcal I_n \on v = 0$. Therefore $\bigotimes_{j=1}^N L_{\lambda_j}$ is a module over the completion $\hat U(\g^{\oplus N})$.
Let $\hat U_{\bm k}(\g^{\oplus N})$, with $\bm k \coloneqq (k_i)_{i=1}^\ell$, denote the quotient of the algebra $\hat U(\g^{\oplus N})$ by the ideal $J_{\bm k}$ generated by the elements $\cent^{(i)} - k_i$, namely
\begin{equation*}
\hat U_{\bm k}(\g^{\oplus N}) \coloneqq \hat U(\g^{\oplus N}) / J_{\bm k}.
\end{equation*}
The action of  $\hat U(\g^{\oplus N})$ on $\bigotimes_{j=1}^N L_{\lambda_j}$ factors through the quotient $\hat U_{\bm k}(\g^{\oplus N})$. 
In particular, if we define
\begin{equation} \label{k of z}
\cent(z) \coloneqq \sum_{i=1}^N \frac{\cent^{(i)}}{z - z_i}
\end{equation}
then the image of $\cent(z)$ in $\hat U_{\bm k}(\g^{\oplus N})$ is the twist function $\varphi(z)$ as in \eqref{twist function}, cf. \eqref{hw lambda i}.

We have the usual ascending filtration $\CC 1 =\F_0 \subset \F_1 \subset \F_2\subset \dots$ of the universal enveloping algebra $U(\g^{\oplus N})$ of the Lie algebra $\g^{\oplus N}$. Every $x\in U(\g^{\oplus N})$ belongs to some filtered subspace; the \emph{degree} of $x$ is by definition the smallest $k\in \ZZ_{\geq 0}$ such that $x\in \F_k$. 
Let us say that an element $x\in \hat U(\g^{\oplus N})$ has \emph{(finite) degree} $k\in \ZZ_{\geq 0}$ if, when $x$ is written as a sum as in \eqref{cex}, the degrees of the $x_m$ are bounded above and $k$ is their maximum.

\subsection{Conjectures}\label{sec: conjectures}

\begin{conjecture} \label{conj: higher Ham}
There exist nonzero $\hat U(\g'^{\oplus N})$-valued meromorphic functions $\mathcal S_i(z)$, $i \in E$, on $\CP$ with the following properties:
\begin{enumerate}[(i)]

\item For each $i\in E$, $\mathcal S_i(z)$ has degree $i+1$.
  
\item For any $i, j \in E$ we have
\begin{align*}
[\mathcal S_i(z), \mathcal S_j(w)] &= \left(h^\vee \partial_z  - i \cent(z)\right) \mathcal A_{ij}(z, w) + \left(h^\vee\partial_w  - j \cent(w) \right) \mathcal B_{ij}(z, w),
\end{align*}
for some $\hat U(\g'^{\oplus N})$-valued meromorphic functions $\mathcal A_{ij}(z,w), \mathcal B_{ij}(z,w)$ on $\CP \times \CP$. 

\item For each $i \in E$ and each $j = 1, \ldots, N$ we have
\begin{equation*}
\left[\mathcal H_j, \mathcal S_i(z)\right] = \left(h^\vee \partial_z - i \cent(z) \right)\mathcal D^j_i(z),
\end{equation*}
for some $\hat U(\g'^{\oplus N})$-valued meromorphic function $\mathcal D^j_i(z)$ on $\CP$.

\item For each $i \in E$ and any $x \in \g$ we have, writing $\Delta x\coloneqq\sum_{j=1}^N x^{(j)}$,
\begin{equation*}
\left[\Delta x, \mathcal S_i(z)\right] = \left(h^\vee \partial_z - i \cent(z) \right)\mathcal C^x_i(z),
\end{equation*}
for some $\hat U(\g'^{\oplus N})$-valued meromorphic function $\mathcal C^x_i(z)$ on $\CP$. \qed
\end{enumerate}
\end{conjecture}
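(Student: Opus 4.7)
The plan is to construct the generating fields $\mathcal{S}_i(z)$ as images, under evaluation at the marked points $z_1,\dots,z_N$, of elements in the centre of a suitably completed vacuum module of $\g$ at the critical level. The guiding principle is the Feigin–Frenkel picture: in finite type, the Gaudin generating functions $S_k(z)$ arise from the centre $\mathfrak{z}(\hat{\dot\g})$ of $V_{\rm crit}(\dot\g)$, which is a classical $\mathcal{W}$-algebra whose generators are labelled by the exponents of the Langlands dual. In the present (untwisted affine) setting one expects a parallel statement: an affine analogue of $\mathfrak{z}(\hat{\dot\g})$ for $\g$ itself, whose generators are labelled by the positive exponents $i \in E$ of $\lg$ and produce fields $\mathcal{S}_i(z)$ of degree $i+1$, matching property~(i). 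Independent support comes from the classical limit of \S\ref{sec: classical lim}, where the integrals $\int_\gamma \P^{-i/h^\vee} \mathcal{S}_i(z) dz$ are to localise onto the zeros of $\varphi$ and reproduce the known local charges $Q^x_r$ of \cite{LMV17}.

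Concretely, I would first fix a realisation (e.g.\ a Wakimoto-type free-field realisation) for each tensor factor of $\bigotimes_{j=1}^N L_{\lambda_j}$, and look for elements $\mathcal{S}_i(z)$ characterised by commutation with the image of every screening operator associated to a simple root of $\g$. In this language, property (iv) (compatibility with $\Delta x$) is essentially the invariance under the zero-modes of these screenings, and should follow automatically from the construction once one shows that the infinite sums involved lie in the completion $\hat U(\g'^{\oplus N})$ of \S\ref{sec: ao}. The novel and crucial feature compared to the finite case is the precise form of the commutators in (ii)--(iv): they must be \emph{twisted} derivatives with respect to the operator
\begin{equation*}
\nabla^\varphi_i \coloneqq h^\vee \partial_z - i\, \cent(z).
\end{equation*}
This is the operator version of the twisted de Rham differential $d^\varphi$ of \eqref{twisted de Rham def} acting on a section of the $i$-th power of the relevant line bundle, and it is exactly the structure needed for $\int_\gamma \P^{-i/h^\vee}(\cdots)\, dz$ to annihilate the right-hand sides, thereby yielding commuting integrals $\hat Q^\gamma_i$ that also commute with $\Delta x$ and with the $\mathcal{H}_j$.

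The verification of (ii) is then the heart of the matter. One expects it to reduce to an OPE calculation: the singular part of $\mathcal{S}_i(z) \mathcal{S}_j(w)$ should be expressible as $\partial_z(\text{local})+\partial_w(\text{local})$ modulo terms proportional to $\cent(z)$ and $\cent(w)$ with the precise coefficients dictated by the twisted differentials $\nabla^\varphi_i$ and $\nabla^\varphi_j$. Property (iii) for the quadratic Hamiltonians $\mathcal{H}_j$, which are residues of $\mathcal{S}_1(z)$ in the sense of \eqref{S1 def finite}, should follow from (ii) specialised to $i=1$ by extracting residues at $z=z_j$.

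The main obstacle, in my view, will be existence and control of the fields $\mathcal{S}_i(z)$ for general $i \in E$: beyond the quadratic case $i=1$ (where $\mathcal{S}_1(z)$ is essentially built from the Segal–Sugawara vector, which still makes sense in the completion $\hat U(\g'^{\oplus N})$ thanks to the affine Sugawara construction at non-critical levels together with the form~\eqref{Omega Kac}), one has to construct higher analogues whose expressions involve infinitely many modes of $\g$. The centre of the completed critical-level envelope for an affine Kac-Moody $\g$ is much smaller than its finite-type analogue, so one cannot directly import the Feigin-Frenkel generators; the $\mathcal{S}_i(z)$ should rather live in a larger ``twisted'' object, which is precisely what the non-trivial right-hand sides in (ii)--(iv) encode. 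Establishing convergence of the infinite mode sums in $\hat U(\g'^{\oplus N})$ for each $i \in E$, and then proving the twisted-OPE identities uniformly in $i$, are the two steps I expect to require the most work; the case $\g'=\widehat{\mathfrak{sl}}_M$, $i=2$ treated in \cite{LVY2} provides a concrete template.
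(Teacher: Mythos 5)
This statement is a \emph{conjecture}: the paper offers no proof of it, only motivation (the classical limit matching the local charges of \cite{LMV17}, and the structure of gauge-invariant functions on affine opers from Corollary \ref{cor: opint}) together with a verification of the lowest case. Concretely, the paper exhibits $\mathcal S_1(z) = \ha \nord{\big(L(z)\big|L(z)\big)}$ and checks properties (ii)--(iv) for $i=j=1$ by a short direct computation resting on $[S_1(z),S_1(w)]=0$, the centrality of the quadratic Casimirs $\C^{(i)}$, and the decomposition $S_1(z) = \mathcal S_1(z) - \big(h^\vee\partial_z - \cent(z)\big)\cocent(z)$; the first genuinely higher case ($i=2$, $\g'=\widehat{\mathfrak{sl}}_M$) is deferred to \cite{LVY2}. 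Your text, by contrast, is a research programme rather than a proof: no candidate $\mathcal S_i(z)$ is constructed for any $i\in E_{\geq 2}$, none of (i)--(iv) is established, and you yourself flag the two decisive steps --- convergence of the mode sums in $\hat U(\g'^{\oplus N})$ and the twisted OPE identities uniformly in $i\in E$ --- as open. So the gap is essentially the entire content of the statement.

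The specific route you sketch also contains an obstruction that you mention but do not resolve. By \cite{CIcentre} (as recalled in \S\ref{sec: main conj}), the centre of the completed envelope of an affine Kac--Moody algebra is generated by $\cent$ and the quadratic Casimir alone, so there is no affine analogue of the Feigin--Frenkel critical-level centre supplying one generator per exponent; consequently a Wakimoto/screening characterisation of $\mathcal S_i(z)$ by \emph{exact} commutation with screenings cannot yield fields of degree $i+1$ for all $i\in E$ (the paper makes this point explicitly, e.g.\ for even exponents). The whole content of the conjecture is to weaken exact invariance to identities modulo the twisted derivatives $h^\vee\partial_z - i\,\cent(z)$; to turn your plan into a proof you would have to specify in which enlarged ``twisted'' object the $\mathcal S_i(z)$ are to be found, why commutation with screenings up to twisted total derivatives is the correct characterisation, and why such elements exist with the stated degrees --- none of which is supplied. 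Finally, a smaller slip: deducing (iii) from (ii) ``specialised to $i=1$'' by taking residues is not immediate, because $\mathcal H_j$ is the residue at $z=z_j$ of $S_1(z)$, not of $\mathcal S_1(z)$, and the correction term $-\big(h^\vee\partial_z - \cent(z)\big)\cocent(z)$ has a nonvanishing residue there; the paper's lemma for $i=1$ handles precisely this by computing $[\cocent(z),\mathcal S_1(w)]$ explicitly, and for general $i$ an analogous control of $[\cocent(z),\mathcal S_i(w)]$ would be an additional unproven input.
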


Suppose such functions $\mathcal S_i(z)$ do exist. 
For any contour $\gamma$ as in Corollary \ref{cor: opint} and any $i \in E$, denote by $\hat Q^\gamma_i$ the image of
\begin{equation} \label{integrated operators} \int_\gamma \P(z)^{-i / h^\vee} \mathcal S_i(z) dz
\end{equation}
in the quotient $\hat U_{\bm k}(\g^{\oplus N})$. Then these $\hat Q^\gamma_i$ are commuting Hamiltonians, as follows.

\begin{corollary}
Given Conjecture \ref{conj: higher Ham}, one has 
\be [\hat Q^\gamma_i, \hat Q^{\eta}_j] = 0\nn\ee 
for any $i, j \in E$ and any pair of contours $\gamma, \eta$. 
Moreover, each $\hat Q^\gamma_i$ 
commutes with the diagonal action of $\g$ and with the quadratic Hamiltonians $\mathcal H_j$, $j =1, \ldots, N$. \qed
\end{corollary}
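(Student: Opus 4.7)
The plan is built around one elementary identity. In the quotient $\hat U_{\bm k}(\g^{\oplus N})$ the operator $\cent(z)$ acts as the scalar $\varphi(z) = \partial_z \log \P(z)$, cf.\ \eqref{k of z} and \eqref{twist function}, so for any meromorphic $\hat U(\g'^{\oplus N})$-valued function $F(z)$ one has
\begin{equation*}
\P(z)^{-i/h^\vee}\bigl(h^\vee \partial_z - i\, \cent(z)\bigr) F(z) \;=\; h^\vee \partial_z\bigl(\P(z)^{-i/h^\vee} F(z)\bigr).
\end{equation*}
Consequently, after multiplication by $\P(z)^{-i/h^\vee}$, each of the right-hand sides appearing in properties (ii)--(iv) of Conjecture~\ref{conj: higher Ham} becomes a total $z$- (or $w$-) derivative of an operator-valued expression whose scalar prefactor, by hypothesis on the contour of integration, admits a single-valued branch along that contour.

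For the main assertion I would first write, using that the scalar prefactors commute with the operators $\mathcal S_\bull$,
\begin{equation*}
[\hat Q^\gamma_i,\hat Q^\eta_j] \;=\; \int_\gamma\!\!\int_\eta \P(z)^{-i/h^\vee}\,\P(w)^{-j/h^\vee}\,[\mathcal S_i(z),\mathcal S_j(w)]\,dw\,dz,
\end{equation*}
then substitute property (ii) of Conjecture~\ref{conj: higher Ham} and apply the key identity separately in the two variables. The integrand splits as a sum
\begin{equation*}
h^\vee\partial_z\!\bigl(\P(z)^{-i/h^\vee}\P(w)^{-j/h^\vee}\mathcal A_{ij}(z,w)\bigr) + h^\vee\partial_w\!\bigl(\P(z)^{-i/h^\vee}\P(w)^{-j/h^\vee}\mathcal B_{ij}(z,w)\bigr),
\end{equation*}
and each term integrates to zero over its respective closed contour, since by Corollary~\ref{cor: opint} the factors $\P^{-i/h^\vee}$ and $\P^{-j/h^\vee}$ admit single-valued branches along $\gamma$ and $\eta$ respectively. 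The remaining statements are obtained by the same mechanism: for $[\mathcal H_j,\hat Q^\gamma_i]$ the operator $\mathcal H_j$ is $z$-independent and so commutes with the $z$-integral, after which property (iii) together with the key identity exhibits the integrand as an exact $z$-derivative of a single-valued expression; the vanishing of $[\Delta x,\hat Q^\gamma_i]$ for $x\in\g$ follows identically using property (iv).

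The one point where care is required, and which I regard as the main technical obstacle, is justifying the interchange of commutators and contour integrals in the completed algebra $\hat U_{\bm k}(\g^{\oplus N})$, and ensuring that the auxiliary functions $\mathcal A_{ij}, \mathcal B_{ij}, \mathcal D^j_i, \mathcal C^x_i$ have at worst isolated meromorphic singularities, so that the contours $\gamma, \eta$ can be chosen (or deformed, exactly as for the coefficients $v_r$ in Corollary~\ref{cor: opint}) to avoid them without destroying the single-valuedness of the prefactor. Both are mild regularity properties that any concrete realisation of the $\mathcal S_i(z)$, such as the cubic Hamiltonians for $\widehat{\mathfrak{sl}}_M$ promised in \cite{LVY2}, would be expected to satisfy, and they should be regarded as implicit in a precise version of Conjecture~\ref{conj: higher Ham}.
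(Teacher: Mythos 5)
Your proposal is correct and is essentially the paper's own (implicit) argument: the corollary is stated with no written proof precisely because, in the quotient $\hat U_{\bm k}(\g^{\oplus N})$ where $\cent(z)$ acts as $\varphi(z)=\partial_z\log\P(z)$, the right-hand sides in properties (ii)--(iv) of Conjecture \ref{conj: higher Ham} become, after multiplication by $\P^{-i/h^\vee}$, exact derivatives of single-valued expressions whose integrals over the closed contours $\gamma,\eta$ vanish — exactly your key identity. Your closing caveats about interchanging commutators with integrals and about possible poles of $\mathcal A_{ij},\mathcal B_{ij},\mathcal D^j_i,\mathcal C^x_i$ along the contours are reasonable refinements, treated in the paper as implicit in the conjecture, just as you say.
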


\begin{conjecture} \label{conj: e-val op}
Let $\psi \in \big( \! \bigotimes_{j=1}^N L_{\lambda_j} \big)_{\lambda_\infty}$ be the Schechtman-Varchenko vector associated with the Miura $\lg$-oper $\nabla \in \MOp_{\lg}(\CP)$ in \eqref{u Miura op def}. For every $j \in E$, let $v_j(z)$ be the coefficient of $p_j$ in any quasi-canonical form of the $\lg$-oper $[\nabla]$.

If the Bethe roots $w_j$, $j = 1, \ldots, m$, satisfy the Bethe equations \eqref{Bethe equations} then
\begin{equation*}
\hat Q^\gamma_i \psi = \int_\gamma \P(z)^{-i / h^\vee} v_i(z) dz\; \psi
\end{equation*}
for every $i \in E$ and any choice of contour $\gamma$ as in Corollary \ref{cor: opint}. \qed
\end{conjecture}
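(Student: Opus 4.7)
My plan is to reduce the conjectured eigenvalue identity to a pointwise identity in $\hat U_{\bm k}(\g^{\oplus N})$ acting on $\bigotimes_{j=1}^N L_{\lambda_j}$, and then to integrate using the structure of the twist. The crucial manipulation is that, since $\varphi = \partial_z \log \P$,
\[
\P(z)^{-i/h^\vee}\bigl(h^\vee \partial_z - i\varphi(z)\bigr)\mathcal F_i(z) \;=\; h^\vee\, \partial_z\!\bigl(\P(z)^{-i/h^\vee}\mathcal F_i(z)\bigr)
\]
for any $\hat U_{\bm k}(\g^{\oplus N})$-valued meromorphic $\mathcal F_i(z)$. Since $\gamma$ is closed and $\P^{-i/h^\vee}$ has a single-valued branch along it, integrating such twisted exact forms over $\gamma$ yields zero. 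The problem therefore reduces to establishing, for each $i \in E$, an identity
\[
\mathcal S_i(z)\,\psi \;=\; v_i(z)\,\psi \;+\; \bigl(h^\vee \partial_z - i\varphi(z)\bigr)\mathcal F_i(z)\,\psi,
\]
with $\mathcal F_i(z)$ regular on a neighbourhood of $\gamma$.

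\textbf{Gauge invariance.} For $i \in E_{\geq 2}$, both sides of this identity are well-defined only modulo the residual gauge freedom \eqref{vuptof}, which shifts $v_i$ by precisely such a twisted derivative. The corresponding freedom on the left is supplied by clauses (ii)--(iv) of Conjecture \ref{conj: higher Ham}, which constrain $\mathcal S_i(z)$ up to twisted derivatives. One may therefore work with any convenient quasi-canonical representative of $[\nabla]$, for example the one produced by the recursive algorithm in the proof of Theorem \ref{thm: quasi-canonical form} applied to the Miura $\lg$-oper \eqref{u Miura op def}. For $i=1$, by contrast, Proposition \ref{prop: can form u1} shows $v_1$ is gauge-invariant on the nose, and one should aim for the sharper identity $\mathcal S_1(z)\psi = v_1(z)\psi$.

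\textbf{Quadratic anchor and induction in $i$.} I would first settle $i = 1$, where $\mathcal S_1(z)$ is the natural affine analogue of \eqref{S1 def finite} and its action on $\psi$ is controlled by the Schechtman--Varchenko computation recalled in Appendix \ref{sec: hyp arr}: the double-pole residues at $z_j$ yield the Casimir eigenvalues $\tfrac12(\lambda_j|\lambda_j+2\rho)$, the simple-pole residues at $z_j$ yield $\partial \Phi/\partial z_j$, and the simple-pole residues at the Bethe roots $w_j$ yield $\partial \Phi/\partial w_j$, the last of which vanish on the BAE locus. Comparison with Theorem \ref{thm: Miura oper p1 coeff} closes the $i=1$ case. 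For $i \in E_{\geq 2}$ one then proceeds inductively: clause (ii) forces $[\mathcal S_i(z), \mathcal S_1(w)]\psi$ to be twisted exact in each variable, and, combined with Corollary \ref{cor: Bethe equations}, this rigidly constrains $\mathcal S_i(z)\psi$ modulo a twisted exact term. The residues of $\mathcal S_i(z)\psi$ at the marked points $z_j$ are then compared directly to those produced by the recursive procedure of Theorem \ref{thm: quasi-canonical form} applied to \eqref{u Miura op def}, while the residues at $w_j$ are shown to be twisted exact on the BAE locus, again by Corollary \ref{cor: Bethe equations}.

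\textbf{Main obstacle.} The hard part will be that $\mathcal S_i(z)$ for $i \in E_{\geq 2}$ is itself conjectural, so this plan is ultimately contingent on a companion construction --- such a construction is carried out in type $\widehat{\mathfrak{sl}}_M$ for the cubic case in the forthcoming \cite{LVY2}. Even granting existence, Conjecture \ref{conj: higher Ham} as stated only constrains $\mathcal S_i(z)$ up to commutators with $\Delta x$, $\mathcal H_j$ and the other $\mathcal S_j(w)$; pinning down its action on the Schechtman--Varchenko vector requires a concrete commutator with the lowering operators used to build $\psi$, \emph{i.e.}\ an affine analogue of the Feigin--Frenkel computation underlying the finite-type Gaudin eigenvalue theorem. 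Producing that commutator and showing that the resulting expression collapses, modulo twisted exact terms, to $v_i(z)\psi$ on the BAE locus is where the substantive work lies.
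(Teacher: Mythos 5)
This statement is Conjecture \ref{conj: e-val op}: the paper offers no proof of it, so there is nothing to check your argument against. The closest the paper comes is the quadratic-level consistency check of \S\ref{sec: quad Ham} — Proposition \ref{prop: Casimir hom grad}, Theorem \ref{thm: Miura oper p1 coeff} and \eqref{evalue eq} give the eigenvalue of $S_1(z)$ as $h^\vee$ times the $p_1$-coefficient, and \eqref{S1 with twisted der d}--\eqref{S1 on psi} convert this into the integrated statement — together with the deferral of the cubic $\widehat{\mathfrak{sl}}_M$ case to \cite{LVY2}. Your reduction of the conjecture to a pointwise identity modulo twisted derivatives, and your use of the Schechtman--Varchenko computation plus Theorem \ref{thm: Miura oper p1 coeff} as the $i=1$ anchor, do reproduce exactly this part of the paper. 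One local inaccuracy: the ``sharper identity'' $\mathcal S_1(z)\psi = v_1(z)\psi$ is not what holds; by \eqref{S1 on psi} the action of $\mathcal S_1(z)$ on $\psi$ differs from that of $S_1(z)$ (whose eigenvalue is $h^\vee v_1(z)$, note the factor $h^\vee$) by a twisted derivative coming from the $\cocent(z)$ term, so even at $i=1$ only the integrated equality is gauge-meaningful, consistent with Corollary \ref{cor: v1}.

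The genuine gap is your inductive step for $i \in E_{\geq 2}$. Clauses (ii)--(iv) of Conjecture \ref{conj: higher Ham} assert only that various commutators of $\mathcal S_i(z)$ are twisted-exact; such conditions do not ``rigidly constrain $\mathcal S_i(z)\psi$ modulo a twisted exact term.'' They are blind, for instance, to replacing $\mathcal S_i(z)$ by $\mathcal S_i(z)$ plus any expression commuting with $\mathcal S_1(w)$, $\mathcal H_j$ and $\Delta x$ up to twisted derivatives, and they carry no information tying the putative eigenvalue to the particular function $v_i(z)$ on the space of $\lg$-opers; Corollary \ref{cor: Bethe equations} controls the poles of $v_i$ at the Bethe roots, not the residues of $\mathcal S_i(z)\psi$. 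Pinning down $\mathcal S_i(z)\psi$ requires the explicit construction of $\mathcal S_i(z)$ (e.g.\ from the local Lax matrix \eqref{local Lax matrix} via invariant tensors, as suggested in \S\ref{sec: classical lim}) together with an affine analogue of the Feigin--Frenkel/Schechtman--Varchenko computation of its commutators with the lowering operators out of which $\psi$ is built — which is precisely the open content of the conjecture. You say as much in your final paragraph, so the proposal is an honest strategy, but as it stands it is a programme contingent on Conjecture \ref{conj: higher Ham} and on substantial unperformed computations, not a proof.
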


The remainder of this section is devoted to showing that these conjectures are consistent with what is known about the quadratic Hamiltonians for affine Gaudin models. 

In a separate paper \cite{LVY2} we explicitly construct $\mathcal S_2(z)$ in the case $\g' = \widehat{\mathfrak{sl}}_M$, $M\geq 3$ and show that the statements of Conjecture \ref{conj: higher Ham} hold for $i=1,2$. In these cases we also verify Conjecture \ref{conj: e-val op} for $m=0, 1$. In particular, we will show in \cite{LVY2} that in the case of two sites ($N=2$) the Hamiltonians $\hat{Q}^\gamma_1$ and $\hat{Q}^\gamma_2$ are proportional to the quadratic and cubic integrals of motion of quantum Boussinesq theory. This generalises the observation made in \cite{FFsolitons} that the local quadratic Hamiltonian of the two site affine Gaudin model associated with $\widehat{\mathfrak{sl}}_2$ is proportional to the Hamiltonian $L_0$ of the coset Virasoro algebra. Therefore, in the particular case $\g' = \widehat{\mathfrak{sl}}_2$ with $N=2$, our Conjecture \ref{conj: e-val op} complements the conjecture made in \cite[\S 6.4]{FFsolitons} about the joint eigenvectors of the local affine Gaudin Hamiltonians.

\subsection{Quadratic affine Gaudin Hamiltonians} \label{sec: quad Ham}
Recall $\g \cong \hat{\mathcal L}\dot \g \rtimes \CC t \del_t$. Fix a basis $I^a$, for $a = 1, \ldots, \dim\dot\g$, of $\dot\g$. Recall the non-degenerate bilinear form $(\cdot|\cdot): \g \times \g \to \CC$ on $\g$ from \S\ref{sec: Cartan data}. It restricts to a non-degenerate bilinear form $\dot \g \times \dot\g \to \CC$ on $\dot\g$. Let $I_a$ be the dual basis of $\dot\g$ with respect to this restriction. 
A basis of $\g$ is then given by $I^a_n \coloneqq I^a \otimes t^n$, for $a = 1, \ldots, \dim \dot\g$ and $n \in \ZZ$, together with $\cent$ and $\cocent$. The corresponding dual basis of $\g$, with respect to $(\cdot|\cdot)$, is given by $I_{a, -n} \coloneqq I_a  \otimes t^{-n}$, for $a = 1, \ldots, \dim \dot\g$ and $n \in \ZZ$, together with $\cocent$ and $\cent$.
In terms of these bases, the quadratic Gaudin Hamiltonians \eqref{quad Ham intro} in this untwisted affine case then take the form
\begin{equation} \label{Gaudin Ham def}
\mathcal H_i = \sum_{\substack{j=1\\ j \neq i}}^N \frac{\cent^{(i)} \cocent^{(j)} + \cocent^{(i)} \cent^{(j)} + \sum_{n \in \ZZ} I^{(i)}_{a, -n} I^{a(j)}_n}{z_i - z_j}\in \hat U(\g^{\oplus N}), \qquad i = 1, \ldots, N.
\end{equation}
(Here and below we employ summation convention: $I_a I^a := \sum_{a=1}^{\dim \dot \g} I_a I^a$.)

For every $i = 1, \ldots, N$, the completed enveloping algebra $\hat U(\g^{\oplus N})$ also contains the $i^{\rm th}$ copy of the \emph{quadratic Casimir} of $\g$, namely the element of $\hat U(\g^{\oplus N})$ defined as
\begin{equation} \label{Omega def}
\C^{(i)} \coloneqq (\cent^{(i)} + h^\vee) \cocent^{(i)} + \ha I^{(i)}_{a, 0} I^{a(i)}_0 + \sum_{n > 0} I^{(i)}_{a, -n} I^{a(i)}_n.
\end{equation}

\begin{proposition} \label{prop: Casimir hom grad}
Each $\C^{(i)}$, $i=1, \ldots, N$ is central in $\hat U(\g^{\oplus N})$.

Its action on the tensor product of irreducible highest weight $\g$-module $\bigotimes_{j=1}^N L_{\lambda_j}$, for any $\lambda_1, \ldots, \lambda_N \in \h^\ast$, is given by multiplication by $\ha (\lambda_i | \lambda_i + 2 \rho)$.
\begin{proof}
It suffices to consider the case $N=1$, for which we can drop all superscripts labelling the copy of $\g$ in the direct sum $\g^{\oplus N}$. For the first statement we will simply show that $\C$ as defined in \eqref{Omega def} coincides with the quadratic Casimir for a general Kac-Moody algebra \cite[\S 2.5]{KacBook} in the affine case. The second part of the statement will then follow from \cite[Corollary 2.6]{KacBook}.

We have the Cartan subalgebra $\dot\h= \h \cap \dot\g$ of $\dot\g$ and the root space decomposition
\begin{equation*}
\dot\g = \dot\h \oplus \bigoplus_{\alpha \in \dot\Delta} \dot\g_\alpha,
\end{equation*}
where $\dot\Delta$ denotes the root system of $\dot\g$. Let $\dot\Delta_+ \subset \dot\Delta$ denote the subset of positive roots. The corresponding root space decomposition of the untwisted affine Kac-Moody algebra $\g$ reads
\begin{equation*}
\g = \bigoplus_{\wt\alpha \in \Delta} \g_{\wt\alpha},
\end{equation*}
where $\Delta \coloneqq \{ \alpha + n \delta \, |\, \alpha \in \dot\Delta, n \in \ZZ \}$ is the root system of $\g$. We denote the subset of positive roots by $\Delta_+ \coloneqq \{ \alpha + n \delta \, |\, \alpha \in \dot\Delta, n > 0 \} \cup \dot\Delta_+$. Explicitly, $\g_{\pm \alpha+n\delta} = \dot\g_{\pm \alpha} \otimes t^n$ for every $\alpha \in \dot\Delta_+$ and $n \in \ZZ$, while $\g_{n\delta} = \dot\h \otimes t^n$ for all $n \in \ZZ \setminus \{ 0 \}$ and $\g_0 = \h$.
We fix a basis $e^s_{\wt\alpha}$, $s = 1, \ldots, \dim \g_{\wt\alpha}$ of the root space $\g_{\wt\alpha}$ for each $\wt\alpha \in \Delta_+$ and denote by $e^s_{-\wt\alpha}$, $s = 1, \ldots, \dim \g_{\wt\alpha}$ its dual basis in $\g_{-\wt\alpha}$. Also fix a basis $\{ u_i \}_{i=1}^{\dim \h} = \{ h_i \}_{i=1}^\ell \cup \{ \cent, \cocent \}$ of $\h$, where $\{ h_i \}_{i=1}^\ell$ is a basis of $\dot\h$, and let $\{ u^i \}_{i=1}^{\dim \h} = \{ h^i \}_{i=1}^\ell \cup \{ \cocent, \cent \}$ be its dual basis, where $\{ h^i \}_{i=1}^\ell$ is the basis of $\dot\h$ dual to $\{ h_i \}_{i=1}^\ell$.

We may now rewrite the expression \eqref{Omega def} for the quadratic Casimir using the above dual bases of $\g$. For the second term on the right hand side of \eqref{Omega def} we have
\begin{equation*}
I_{a, 0} I^a_0 = \sum_{i=1}^\ell h_i h^i + \sum_{\alpha \in \dot\Delta_+} \big( e_\alpha e_{-\alpha} + e_{-\alpha} e_\alpha \big) = 2 \nu^{-1}(\dot\rho) + \sum_{i=1}^\ell h_i h^i + 2 \sum_{\alpha \in \dot\Delta_+} e_{-\alpha} e_\alpha,
\end{equation*}
where we dropped the superscript `$s$' on the basis elements $e^s_{\pm \alpha}$ for $\alpha \in \dot\Delta_+$ since in this case $\dim \g_{\pm \alpha} = 1$. In the second equality we used the relation $[e_\alpha, e_{-\alpha}] = \nu^{-1}(\alpha)$ and set $\dot\rho \coloneqq \ha \sum_{\alpha \in \dot\Delta_+} \alpha$.
On the other hand, the infinite sum over $n>0$ in \eqref{Omega def} can be written as
\begin{equation*}
\sum_{n > 0} I_{a, -n} I^a_n = \sum_{\wt\alpha \in \Delta_+ \setminus \dot\Delta_+} \sum_{s = 1}^{\dim \g_{\wt\alpha}} e^s_{-\wt\alpha} e^s_{\wt\alpha}.
\end{equation*}

Recall the set of fundamental coweights $\{ \check\Lambda_i \}_{i=0}^\ell$ of $\g$ defined by \eqref{def: co Lambda}. The set of fundamental coweights $\{ \check\omega_i \}_{i=1}^\ell$ of $\dot\g$ can be identified with $\check\omega_i = \check\Lambda_i - a_i \check\Lambda_0$ for each $i = 1, \ldots,\ell$. If we set $\epsilon_i \coloneqq a_i \check a_i^{-1}$ for $i = 0, \ldots, \ell$, then
\begin{equation*}
\nu^{-1}(\dot\rho) = \sum_{i=1}^\ell \epsilon_i^{-1} \check\omega_i = \sum_{i=0}^\ell \epsilon_i^{-1} (\check\Lambda_i - a_i \check\Lambda_0) = \nu^{-1}(\rho) - h^\vee \cocent
\end{equation*}
where in the second step we used the assumption that $a_0 = 1$, cf. Remark \ref{rem: not A2l}.
Therefore, combining all the above we can rewrite the quadratic Casimir \eqref{Omega def} as
\begin{equation} \label{Omega Kac}
\C = \nu^{-1}(\rho) + \ha \sum_{i=1}^{\dim \h} u_i u^i + \sum_{\wt\alpha \in \Delta_+} \sum_{s = 1}^{\dim \g_{\wt\alpha}} e^s_{-\wt\alpha} e^s_{\wt\alpha},
\end{equation}
which coincides with its expression given in \cite[\S 2.5]{KacBook}, as required.
\end{proof}
\end{proposition}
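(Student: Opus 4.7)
The plan is to reduce immediately to the case $N=1$ and then identify the element $\C = (\cent + h^\vee)\cocent + \ha I_{a,0} I^{a}_0 + \sum_{n>0} I_{a,-n} I^a_n$ with the standard quadratic Casimir of the affine Kac-Moody algebra $\g$ as written in Kac's book \cite[\S 2.5]{KacBook}; once that identification is made, both centrality in $\hat U(\g)$ and the eigenvalue on $L_{\lambda}$ follow from \cite[Corollary 2.6]{KacBook}. The reduction to $N=1$ is immediate since $\C^{(i)}$ involves only generators living in the $i^{\rm th}$ summand of $\g^{\oplus N}$, which commute with everything in the other summands, so centrality in $\hat U(\g^{\oplus N})$ reduces to centrality of $\C$ in $\hat U(\g)$.

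For the identification step I would first decompose everything along the root space decomposition. Using the realisation $\g \cong \hat{\mathcal L}\dot\g \rtimes \CC t\partial_t$, the positive roots of $\g$ split as $\Delta_+ = \dot\Delta_+ \cup \{\alpha + n\delta : \alpha \in \dot\Delta, \, n > 0\}$, and the infinite sum $\sum_{n>0} I_{a,-n} I^a_n$ is exactly the contribution $\sum_{\tilde\alpha \in \Delta_+ \setminus \dot\Delta_+} \sum_s e^s_{-\tilde\alpha} e^s_{\tilde\alpha}$ to Kac's Casimir from positive imaginary/real affine roots. The term $\ha I_{a,0} I^a_0$ is a quadratic Casimir for $\dot\g$; using $[e_\alpha, e_{-\alpha}] = \nu^{-1}(\alpha)$ I would rewrite it as $\nu^{-1}(\dot\rho) + \ha \sum_i h_i h^i + \sum_{\alpha \in \dot\Delta_+} e_{-\alpha} e_\alpha$, thereby normal-ordering the finite part.

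The main technical obstacle is then to show that the combination
\[(\cent + h^\vee)\cocent + \nu^{-1}(\dot\rho) + \ha \sum_i h_i h^i\]
equals the Cartan part $\nu^{-1}(\rho) + \ha \sum_{i=1}^{\dim \h} u_i u^i$ appearing in Kac's formula. The key identity needed is $\nu^{-1}(\rho) = \nu^{-1}(\dot\rho) + h^\vee \cocent$, which I would derive by expanding $\rho$ in the fundamental coweight basis $\{\check\Lambda_i\}_{i=0}^\ell$ using the relations \eqref{def: co Lambda} and the finite-type coweights $\check\omega_i = \check\Lambda_i - a_i \check\Lambda_0$, invoking the normalisation $a_0 = 1$ from Remark \ref{rem: not A2l}. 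Passing $\cent$ through $\cocent$ produces the extra $\cent\cocent$ term that completes the full Cartan quadratic $\ha \sum_{i=1}^{\dim\h} u_i u^i$ once the dual basis $\{u_i\} = \{h_i\} \cup \{\cent, \cocent\}$, $\{u^i\} = \{h^i\} \cup \{\cocent, \cent\}$ is used.

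Once the equality with \eqref{Omega Kac} is established, centrality of $\C$ in $\hat U(\g)$ is \cite[\S 2.5]{KacBook}; note that the completion is needed precisely because of the infinite sum over $n > 0$, and the choice of ordering (with annihilation-like operators $I^a_n$, $n>0$, on the right) ensures the action is well-defined on smooth modules. The eigenvalue $\ha(\lambda|\lambda + 2\rho)$ on a highest weight vector in $L_\lambda$ is then read off by acting on the highest weight vector: the sum over $n > 0$ and over positive roots annihilates it, the Cartan part gives $\ha(\lambda|\lambda) + (\rho|\lambda)$ via $\nu^{-1}(\rho)$ and $\ha \sum_i u_i u^i$, and centrality forces this eigenvalue throughout $L_\lambda$, as in \cite[Corollary 2.6]{KacBook}.
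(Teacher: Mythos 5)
Your proposal is correct and follows essentially the same route as the paper: reduce to $N=1$, rewrite $\ha I_{a,0}I^a_0$ and the normally ordered sum over $n>0$ via the affine root space decomposition, use $\nu^{-1}(\dot\rho)=\nu^{-1}(\rho)-h^\vee\cocent$ (from $\check\omega_i=\check\Lambda_i-a_i\check\Lambda_0$ and $a_0=1$) to match the Cartan part, and conclude by identifying $\C$ with Kac's Casimir so that centrality and the eigenvalue $\ha(\lambda_i|\lambda_i+2\rho)$ follow from \cite[\S 2.5 and Corollary 2.6]{KacBook}. The only difference is that you spell out the reduction to $N=1$ and the highest-weight eigenvalue computation slightly more explicitly than the paper, which simply cites Kac.
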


By direct analogy with the finite-dimensional case, cf. \eqref{S1 def finite}, it is natural to introduce the $\hat U(\g^{\oplus N})$-valued meromorphic function
\begin{equation} \label{S1 def affine}
S_1(z) \coloneqq \sum_{i=1}^N \frac{\C^{(i)}}{(z - z_i)^2} + \sum_{i=1}^N \frac{\H_i}{z - z_i}.
\end{equation}

We then have the following direct generalisation of the finite-dimensional case.

\begin{theorem}
Let $\nabla \in \MOp_{\lg}(\CP)$ be of the form \eqref{u Miura op def}. If the set of Bethe roots $w_j$, $j = 1, \ldots, m$ satisfy the Bethe equations \eqref{Bethe equations}, then the eigenvalue of $S_1(z)$ on the subspace $\big( \! \bigotimes_{j=1}^N L_{\lambda_j} \big)_{\lambda_\infty}$ of weight $\lambda_\infty = \sum_{i=1}^N \lambda_i - \sum_{j=1}^m \alpha_{c(j)} \in \h^\ast$, is given by $h^\vee$ times the coefficient of $p_1$ in any quasi-canonical form of the underlying $\lg$-oper $[\nabla]$.
\begin{proof}
This follows form Theorem \ref{thm: Miura oper p1 coeff} together with Proposition \ref{prop: Casimir hom grad} and \eqref{evalue eq}.
\end{proof}
\end{theorem}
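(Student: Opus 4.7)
The plan is to match coefficients on both sides of the claimed eigenvalue identity, making use of the three earlier results cited in the proof sketch: Theorem \ref{thm: Miura oper p1 coeff} for the coefficient of $p_1$, Proposition \ref{prop: Casimir hom grad} for the Casimir eigenvalues, and the equation \eqref{evalue eq} recalled in Appendix \ref{sec: hyp arr} for the action of the quadratic Hamiltonians on the Schechtman--Varchenko vector.

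First I would multiply the formula from Theorem \ref{thm: Miura oper p1 coeff} by $h^\vee$ to obtain
\begin{equation*}
h^\vee v_1(z) = \sum_{i=1}^N \frac{\tfrac{1}{2}(\lambda_i | \lambda_i + 2 \rho)}{(z- z_i)^2} + \sum_{i=1}^N \frac{\partial \Phi / \partial z_i}{z - z_i} + \sum_{j=1}^m \frac{\partial \Phi / \partial w_j}{z - w_j}.
\end{equation*}
On the operator side, by Proposition \ref{prop: Casimir hom grad} each central element $\C^{(i)}$ acts on the tensor product $\bigotimes_{j=1}^N L_{\lambda_j}$ (and hence on any weight subspace thereof, such as the one of weight $\lambda_\infty$) as multiplication by $\tfrac{1}{2}(\lambda_i | \lambda_i + 2\rho)$. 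Thus the double-pole contributions in $S_1(z)$ and in $h^\vee v_1(z)$ agree on $\big(\!\bigotimes_{j=1}^N L_{\lambda_j}\big)_{\lambda_\infty}$.

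Next, for the simple-pole contributions at the marked points $z_i$, I would invoke equation \eqref{evalue eq} of Appendix \ref{sec: hyp arr}, which states that under the Bethe equations \eqref{Bethe equations} the Schechtman--Varchenko vector $\psi \in \big(\!\bigotimes_{j=1}^N L_{\lambda_j}\big)_{\lambda_\infty}$ is a joint eigenvector of the quadratic Hamiltonians $\mathcal H_i$ with eigenvalues $\partial \Phi/\partial z_i$. This exactly matches the residues at $z_i$ in $h^\vee v_1(z)$. Finally, Corollary \ref{cor: Bethe equations} (equivalently, the vanishing $\partial \Phi /\partial w_j = 0$ defining the Bethe equations) ensures that the apparent simple poles of $h^\vee v_1(z)$ at the Bethe roots $w_j$ vanish; these have no counterpart in $S_1(z)$, which has poles only at the $z_i$ by construction. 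Assembling these three matches term by term yields $S_1(z)\psi = h^\vee v_1(z)\psi$ as a meromorphic identity on $\CP$, proving the theorem.

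I do not expect any genuine obstacle here: the theorem is essentially a direct consistency check, with all the nontrivial work already done in Theorem \ref{thm: Miura oper p1 coeff} (the oper-side computation) and in the Schechtman--Varchenko result of the appendix (the module-side computation). The only mildly delicate point is bookkeeping around conventions, in particular the factor of $h^\vee$ and the identification of the twist function $\varphi(z)$ with the image of $\cent(z)$ as in \eqref{k of z}--\eqref{hw lambda i}; these need to be threaded carefully but raise no conceptual difficulty.
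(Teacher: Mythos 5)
Your proof is correct and follows essentially the same route as the paper: the paper's proof is precisely the combination of Theorem \ref{thm: Miura oper p1 coeff}, Proposition \ref{prop: Casimir hom grad} and the Schechtman--Varchenko eigenvalue equation \eqref{evalue eq}, with the Bethe equations $\partial\Phi/\partial w_j=0$ removing the poles at the Bethe roots, exactly as you argue. Nothing further is needed.
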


The expression \eqref{S1 def affine} can alternatively be described as follows. 
For any $x \in \g$ we define the $\g^{\oplus N}$-valued meromorphic function, cf. \eqref{k of z},
\begin{equation*}
x(z) \coloneqq \sum_{i=1}^N \frac{x^{(i)}}{z - z_i}.
\end{equation*}
We then introduce the \emph{formal Lax matrix} of the Gaudin model associated with $\g$ as the element
\begin{equation} \label{formal Lax}
\mathcal L(z) \coloneqq \cent \otimes \cocent(z) + \cocent \otimes \cent(z) + \sum_{n \in \ZZ} I_{a, -n} \otimes I^a_n(z)
\end{equation}
of the completed tensor product $\g \,\hat\otimes\, \g^{\oplus N}$. Then the generating function \eqref{S1 def affine} for the quadratic affine Gaudin Hamiltonians can be rewritten as
\begin{equation} \label{S1 alernative}
S_1(z) = \ha \nord{\!\big( \mathcal L(z) \big| \mathcal L(z) \big)\!} - \; h^\vee \cocent'(z),
\end{equation}
where $\nord{\cdot}$ denotes normal ordering by mode numbers, \emph{i.e.} $\nord{I^a_m I^b_n}$ is $I_n^b I^a_m$ if $m\geq 0$ and $I^a_mI^b_n$ otherwise.

\begin{remark} This expression can be regarded as a quantisation of the generating function for the quadratic Hamiltonians of a \emph{classical} affine Gaudin model \cite{V17}. Indeed, the latter is a meromorphic function valued in a completion of the symmetric algebra $S(\g^{\oplus N})$, given explicitly by $\ha (\mathcal L(z) | \mathcal L(z))$. In fact, the above expression for $S_1(z)$ can be heuristically obtained from $\ha (\mathcal L(z) | \mathcal L(z))$ by using the commutation relations of $\g$ to rewrite each term so that all raising operators, \emph{i.e.} $I^a_n$ with $n > 0$, appear on the right. This procedure results in a meaningless infinite sum, but the term $- h^\vee \cocent'(z)$ can be thought of as its regularisation; see \cite[\S 2.11]{KacBook} for a similar motivation of the linear term $\nu^{-1}(\rho)$ in the expression \eqref{Omega Kac} for the quadratic Casimir of $\g$ in the proof of Proposition \ref{prop: Casimir hom grad}.
\end{remark}

Now we explain how the generating function $S_1(z)$ of the  quadratic Hamiltonians fits into our conjecture on the form of the higher affine Gaudin Hamiltonians. We first reinterpret it in light of Corollary \ref{cor: v1}. Define the \emph{local Lax matrix} as the part of the formal Lax matrix \eqref{formal Lax} involving only the loop generators of $\g$, namely
\begin{equation} \label{local Lax matrix}
L(z) \coloneqq \sum_{n \in \ZZ} I_{a, -n} \otimes I^a_n(z).
\end{equation}
The expression \eqref{S1 alernative} for $S_1(z)$ can now be rewritten as follows
\begin{equation} \label{S1 with twisted der d}
S_1(z) = \mathcal S_1(z) - \, \big(h^\vee \partial_z - \cent(z) \big) \cocent(z),
\end{equation}
where we defined
\begin{equation*}
\mathcal S_1(z) \coloneqq \ha \nord{\!\big( L(z) \big| L(z) \big)\!}.
\end{equation*}

Let $\psi \in \big( \! \bigotimes_{j=1}^N L_{\lambda_j} \big)_{\lambda_\infty}$ denote the Schechtman-Varchenko vector corresponding to the Miura $\lg$-oper $\nabla \in \MOp_{\lg}(\CP)$ in \eqref{u Miura op def}. Recall the expression \eqref{hw lambda i} for the weights $\lambda_i \in \h^\ast$, $i = 1, \ldots, N$. On $\bigotimes_{j=1}^N L_{\lambda_j}$, $\cent(z)$ acts as $\varphi(z)$ as we noted above, and the action of $\cocent(z)$ on $\psi$ is given by
\begin{align*}
\cocent(z) \psi &= \sum_{i=1}^N \frac{\Delta_i - m_0}{z - z_i} \psi \eqqcolon \Delta(z) \psi,
\end{align*}
where $m_0$ is the number of Bethe roots associated to the affine simple root $\alpha_0$.
In other words, the action of \eqref{S1 with twisted der d} on the Schechtman-Varchenko vector reads
\begin{equation} \label{S1 on psi}
S_1(z) \psi = \mathcal S_1(z) \psi - h^\vee \bigg( \Delta'(z) - \frac{\varphi(z)}{h^\vee} \Delta(z) \bigg) \psi.
\end{equation}
Observe that the final term is a twisted derivative of degree 1.

Now recall from Corollary \ref{cor: v1} that if instead of working with $\lg$-opers we were to consider $\lg/\CC \delta$-opers, then the coefficients of all the $p_i$'s, $i \in E$ in a quasi-canonical form would be on an equal footing since the coefficient $v_1(z)$ of $p_1$ would also only be defined up to a twisted derivative
\begin{equation*}
v_1 \longmapsto v_1 - f'_1 + \frac{\varphi}{h^\vee} f_1,
\end{equation*}
with $f_1 \in \M$. In particular, only its integral
\begin{equation*}
\int_\gamma \P(z)^{- 1 /h^\vee} v_1(z) dz
\end{equation*}
over a cycle $\gamma$ as in Corollary \ref{cor: opint} would provide a well-defined function on the space of $\lg/\CC \delta$-opers.

In exactly the same way as we conjecture the spectrum of an affine Gaudin model associated with $\g$ to be described by $\lg$-opers, the space of $\lg/\CC \delta$-opers should describe the spectrum of an affine Gaudin model associated with the derived algebra $\g'$. Indeed, since the weights appearing as the residues in a Miura $\lg/\CC \delta$-oper are classes in $\lh / \CC\delta = \h^\ast/\CC \delta$, \emph{i.e.} weights in $\h^\ast$ defined up to an arbitrary multiple of $\delta$, we should not include the generator $\cocent$ on the Gaudin model side since it pairs non-trivially with the weight $\delta$. And one way to disregard the generator $\cocent$ from the expression \eqref{S1 with twisted der d} for $S_1(z)$ is to consider its integral 
\begin{equation*}
\int_\gamma \P(z)^{- 1/ h^\vee} S_1(z) dz
\end{equation*}
over a contour $\gamma$ as in Corollary \ref{cor: opint}. Indeed, it follows from \eqref{S1 on psi} that the action of this operator on the Schechtman-Varchenko vector $\psi$ coincides with the action of the operator
\begin{equation*}
\int_\gamma \P(z)^{- 1/ h^\vee} \mathcal S_1(z) dz.
\end{equation*}

The following Lemma shows that Conjecture \ref{conj: higher Ham} holds at least for $i=1$. 
\begin{lemma}
For any distinct $z, w \in X$ we have
\begin{equation*}
\big[ \mathcal S_1(z), \mathcal S_1(w) \big] = \big( h^\vee \partial_z - \cent(z) \big) \mathcal A(z, w) - \big( h^\vee\partial_w - \cent(w) \big) \mathcal A(z, w),
\end{equation*}
where $\mathcal A(z, w)$ is the $\hat U(\g'^{\oplus N})$-valued meromorphic function on $\CP \times \CP$ given by
\begin{equation*}
\mathcal A(z, w) \coloneqq \frac{1}{z - w} \sum_{\substack{i,j=1\\ i \neq j}}^N \frac{\sum_{n \in \ZZ} n I^{(i)}_{a, -n} I^{a (j)}_n}{(z - z_i)(w - z_j)}.
\end{equation*}
Also, for each $j=1,\dots,N$, 
\begin{equation*}
\big[ \mc H_j , \mc S_1(w) \big] = \big( \hc \del_w - \cent(w)\big) \sum_{\substack{i=1\\ i \neq j}}^N \frac{\sum_{n \in \ZZ} n I^{(j)}_{a, -n} I^{a (i)}_n}{(w - z_i)(w-z_j)}.
\end{equation*}
Moreover, for any $x \in \g$ we have
$[ \Delta x, \mathcal S_1(z) ] = \big( h^\vee \partial_z - \cent(z) \big) [x, \cocent](z)$.
\begin{proof}
Since the quadratic Gaudin Hamiltonians $\H_i$, $i = 1, \ldots, N$ mutually commute and the $\C^{(i)}$, $i = 1, \ldots, N$ are central in $\hat U(\g^{\oplus N})$ by Proposition \ref{prop: Casimir hom grad}, it follows that $[S_1(z), S_1(w)] = 0$. 
Noting that
$[ \cocent(z), \mathcal S_1(w) ] = \mathcal A(z, w)$
one has \be \big[\hc \cocent'(z) - \cent(z) \cocent(z), \mc S_1(w)\big] = \big( \hc \del_z - \cent(z)\big) \mc A(z,w) \nn\ee
Using the relation \eqref{S1 with twisted der d} we get the first result. It also follows that 
$[S_1(z), \mc S_1(w)] = - (\hc \del_w - \cent(w)) \mc A(z,w)$, from which, taking the residue at $z=z_j$, we obtain the commutators with $\mc H_j$.
The last part follows similarly from the relation $[\Delta x, S_1(z)] = 0$ which is a consequence of the fact that both $\C^{(i)}$ and $\mathcal H_i$, for each $i =1, \ldots, N$, commute with the diagonal action of $\g$.
\end{proof}
\end{lemma}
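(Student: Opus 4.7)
The plan is to use the decomposition \eqref{S1 with twisted der d} of $S_1(z)$ as $\mathcal{S}_1(z)$ minus a twisted derivative of $\cocent(z)$, and bootstrap from two identities that hold for $S_1(z)$: namely $[S_1(z), S_1(w)] = 0$, which follows from mutual commutativity of the quadratic Gaudin Hamiltonians $\mathcal{H}_i$ (cf.\ \cite{SV,RV}) together with centrality of each Casimir $\C^{(i)}$ (Proposition \ref{prop: Casimir hom grad}); and $[\Delta x, S_1(z)] = 0$, which comes from the fact that both the $\mathcal{H}_i$ and the $\C^{(i)}$ commute with the diagonal $\g$-action. Because $\cent$ is central in $\hat U(\g^{\oplus N})$, the twisted derivative operator $h^\vee\partial_z - \cent(z)$ may be pulled outside any commutator with $\mathcal{S}_1(w)$ freely, which is exactly what makes the bootstrap work.

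The key auxiliary calculation is to show directly that $[\cocent(z), \mathcal{S}_1(w)] = \mathcal{A}(z,w)$. Starting from $\mathcal{S}_1(w) = \tfrac{1}{2} \sum_{n,a} \nord{I^a_n(w) I_{a,-n}(w)}$ and using the derivation property $[\cocent, I^a_n] = n\, I^a_n$, one obtains
\begin{equation*}
[\cocent^{(k)}, \nord{I^{a(i)}_n\, I^{(j)}_{a,-n}}] = n(\delta_{ki} - \delta_{kj}) \nord{I^{a(i)}_n\, I^{(j)}_{a,-n}}.
\end{equation*}
The diagonal terms $i=j$ cancel mode by mode, independently of normal ordering. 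For $i \neq j$ the two factors commute between sites, and one exploits the symmetry $\sum_a I^a \otimes I_a = \sum_a I_a \otimes I^a$ of the Casimir together with the substitution $n \mapsto -n$ to establish the antisymmetry $T_{ij} = -T_{ji}$, where $T_{ij} \coloneqq \sum_{n,a} n\, I^{(i)}_{a,-n} I^{a(j)}_n$. The elementary identity
\begin{equation*}
(z-z_i)(w-z_j) - (z-z_j)(w-z_i) = (z_i - z_j)(z-w)
\end{equation*}
then collapses the four-pole expression produced by the direct calculation into the three-pole form of $\mathcal{A}(z,w)$ stated in the lemma.

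With this in hand all three parts follow by elementary manipulations. For Part~1, expand $0 = [S_1(z), S_1(w)]$ using \eqref{S1 with twisted der d} in both slots; the auxiliary commutator and its $z \leftrightarrow w$ partner produce the two twisted derivatives of $\mathcal{A}$, giving the stated expression for $[\mathcal{S}_1(z),\mathcal{S}_1(w)]$. For Part~2, use the asymmetric expansion $[S_1(z), \mathcal{S}_1(w)] = -(h^\vee\partial_w - \cent(w))\mathcal{A}(z,w)$ and take the residue at $z = z_j$; since $\mathrm{Res}_{z=z_j} S_1(z) = \mathcal{H}_j$ and the residue of $\mathcal{A}(z,w)$ at $z=z_j$ is elementary from its closed form, the stated commutator drops out. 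For Part~3, applying the decomposition to $[\Delta x, S_1(z)] = 0$ and using centrality of $\cent$ reduces the statement to the site-by-site identity $[\Delta x, \cocent(z)] = [x,\cocent](z)$.

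The main obstacle is the central computation $[\cocent(z), \mathcal{S}_1(w)] = \mathcal{A}(z,w)$: the bookkeeping that collapses an apparent four-pole sum into a closed form with a $1/(z-w)$ factor depends essentially on the tensor antisymmetry $T_{ij} = -T_{ji}$, and without it the combinatorial identity does not produce the required singularity structure. Normal ordering also has to be tracked carefully through the computation, though it is ultimately transparent here because $\cocent$ acts as a grading operator and thus preserves normal order.
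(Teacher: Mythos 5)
Your proof is correct and follows essentially the same route as the paper: bootstrapping from $[S_1(z),S_1(w)]=0$ and $[\Delta x, S_1(z)]=0$ via the decomposition \eqref{S1 with twisted der d}, with the key identity $[\cocent(z),\mathcal S_1(w)]=\mathcal A(z,w)$ and the residue at $z=z_j$ for the commutators with $\mathcal H_j$. The only difference is that you spell out the computation of $[\cocent(z),\mathcal S_1(w)]$ (antisymmetry of $T_{ij}$ and the pole-collapse identity), which the paper simply asserts; this detail checks out.
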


\section{Coordinate invariance and meromorphic $\lg$-opers on curves}\label{sec: coord}

Throughout \S\ref{sec: opers}\---\ref{sec: Miura opers} we fixed a global coordinate $z$ on $\CC \subset \CP$ and studied meromorphic $\lg$-opers in that coordinate.
Let us now consider meromorphic $\lg$-opers in local charts, and discuss their behaviour under changes in coordinate. In this section only, we shall work over an arbitrary Riemann surface $\Sigma$.

When $\lg$ is of finite type, an $\lg$-oper on $\Sigma$ is a triple $(\mathcal F, \mathcal B, \nabla)$ where $\mathcal F$ is a principal $\lG$-bundle, $\mathcal B$ is an $\lB$-reduction and $\nabla$ is a connection on $\mathcal F$ with certain properties; see \emph{e.g.} \cite{BD91, Fre07}. Concretely, such a triple can be constructed by gluing together trivial $\lG$-bundles over coordinate patches, each equipped with a connection given by an $\lg$-oper in canonical form, using the $\lB$-valued transition functions relating canonical forms in different coordinates (see equation \eqref{transition can to can} below) \cite{Fopersontheprojectiveline, Fre07}.

The abstract definition of an $\lg$-oper as a triple can be generalised to the case when $\lg$ is of affine type \cite{Fopersontheprojectiveline}. However, since the quasi-canonical form is not unique in this case by Theorem \ref{thm: quasi-canonical form}, and there is no naturally preferred quasi-canonical form, it is less clear how to construct such a triple explicitly. We therefore proceed differently: we first define the space of $\lg$-opers over any coordinate patch as in \S\ref{sec: opers} and then glue these together to form a sheaf, the sheaf of $\lg$-opers over $\Sigma$.

\subsection{The sheaf of $\lg$-opers $\Op_{\lg}$}

For any open subset $U \subset \Sigma$ we let $\K(U)$ denote the field of meromorphic functions on $U$.
We denote by $\K$ the sheaf $U \mapsto \K(U)$ of meromorphic functions on $\Sigma$. When $\Sigma = \CP$, the field $\M$ of meromorphic functions on $\CP$, introduced in \S\ref{sec: inverse limits}, is the field of global sections of $\K$.
For any open subset $U \subset \Sigma$, we define the Lie algebra $\lhn_+(\K(U))$ and the group $\lhN_+(\K(U))$ as in \S\ref{sec: def oper}. We also set $\lhb_+(\K(U)) \coloneqq \lh(\K(U)) \oplus \lhn_+(\K(U))$.

To begin with, let us suppose that $U\subset \Sigma$ is an open subset equipped with a holomorphic coordinate $t:U \to \CC$. Define $\op_{\lg}(U)$ to be the affine space of connections of the form 
\begin{equation} \label{nf}
\nabla\coloneqq d + p_{-1}dt  + b  dt, \quad b\in \lhb_+(\K(U)).
\end{equation}
As in \S\ref{sec: def oper}, it admits an action of the group $\lhN_+(\K(U))$ by gauge transformations, and we define the space of meromorphic $\lg$-opers on $U$ to be the quotient
\begin{equation} \label{def: Uop}
\Op_{\lg}(U) \coloneqq \op_{\lg}(U) \big/ \lhN_+(\K(U)).
\end{equation}
The proof of the following is as for Theorem \ref{thm: quasi-canonical form}.
\begin{theorem}\label{thm: qcU}
Let $U \subset \Sigma$ be open with a holomorphic coordinate $t : U \to \CC$. Every class $[\nabla] \in \Op_{\lg}(U)$ has a representative of the form 
\begin{equation*}
\nabla = d + \Bigg( p_{-1} - \frac{\varphi}{h^\vee} \rho + \sum_{i\in E} v_i p_i \Bigg) dt
\end{equation*}
where $\varphi \in \K(U)$ and $v_i \in \K(U)$ for each $i\in E$. We call such a form \emph{quasi-canonical}. It is unique up to residual gauge transformations as in Theorem \ref{thm: quasi-canonical form}.
\qed
\end{theorem}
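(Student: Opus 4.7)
The plan is to reduce the statement to Theorem \ref{thm: quasi-canonical form} by observing that its proof is entirely algebraic and local in nature: no use is made of the fact that the coefficient ring $\M$ is the field of meromorphic functions on $\CP$ in particular, only that it is a field of functions over which the Lie-algebraic constructions of \S\ref{sec: affine algebra} make sense. Replacing $\M$ throughout by $\K(U)$, every step goes through verbatim once the twist function $\varphi$ has been extracted from the oper, which is the first thing I would do.

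More precisely, the first step is to decompose $\op_{\lg}(U)$ along a twist function. For any $\nabla = d + p_{-1} dt + b\, dt \in \op_{\lg}(U)$, write $b = b_0 + b_+$ with $b_0 \in \lh(\K(U))$ and $b_+ \in \lhn_+(\K(U))$. Since $\lh = \CC \rho \oplus \lh'$, where $\lh' \coloneqq \lh \cap \lgp$ is the span of the simple coroots, there is a unique decomposition $b_0 = -(\varphi/h^\vee) \rho + b_0'$ with $\varphi \in \K(U)$ and $b_0' \in \lh'(\K(U))$. Call $\op_{\lg}(U)^\varphi$ the affine subspace on which this coefficient of $\rho$ equals $-\varphi/h^\vee$. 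The analog of Lemma \ref{lem: Lambda indep}, with the same proof, shows that each $\op_{\lg}(U)^\varphi$ is stable under gauge transformations by $\lhN_+(\K(U))$: any gauge-induced modification of $b_0$ lies in $\lh \cap \lgp = \lh'$, and so $\varphi$ is an invariant of the class $[\nabla]$.

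The second step is to run the recursive argument of Theorem \ref{thm: quasi-canonical form} inside $\op_{\lg}(U)^\varphi$. The two algebraic facts used are the direct sum decomposition $\lg_n = \a_n \oplus \c_n$ of \eqref{lg n decompA} and the injectivity (resp.\ bijectivity) of the relevant restrictions of $\ad_{p_{-1}}$ described in \S\ref{sec: princ sub}; both are properties of $\lg$ itself and are unaffected by the coefficient ring. Applied grade-by-grade, the recursion determines, uniquely at each stage, components $m_n \in \c_n(\K(U))$ of a gauge parameter $g = \exp(\sum_{n > 0} m_n) \in \lhN_+(\K(U))$ such that $\nabla^g$ has the form $d + p_{-1} dt - (\varphi/h^\vee) \rho\, dt + (\CC \delta \oplus \hat\a_{\geq 1})(\K(U))\, dt$; a further gauge transformation by $\exp(-\chi p_1)$ absorbs the $\delta$-component, producing the desired representative
\begin{equation*}
\nabla = d + \Bigg( p_{-1} - \frac{\varphi}{h^\vee} \rho + \sum_{i \in E} v_i p_i \Bigg) dt, \qquad v_i \in \K(U).
\end{equation*}

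For the uniqueness statement, I would repeat the final part of the proof of Theorem \ref{thm: quasi-canonical form}: if $g = \exp(\sum_{n>0} m_n) \in \lhN_+(\K(U))$ preserves quasi-canonical form, then examining \eqref{recursion can form} order by order forces $m_1 = 0$ and $m_n \in \a_n(\K(U))$ for all $n$, and then the grade-$(n{-}1)$ component of the resulting identity gives \eqref{vuptof} and shows $v_1$ is invariant. Thus the residual gauge freedom is $\exp(\hat\a_{\geq 2}(\K(U)))$, exactly as in the global setting. There is no real obstacle here beyond bookkeeping; the only point requiring minor care is confirming that $\varphi$ is an invariant of the class, which is precisely what the stability of $\op_{\lg}(U)^\varphi$ in step one accomplishes.
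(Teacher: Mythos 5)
Your proposal is correct and follows essentially the same route as the paper, which simply states that the proof is as for Theorem \ref{thm: quasi-canonical form} with $\M$ replaced by $\K(U)$. Your preliminary step of isolating the coefficient of $\rho$ and checking via the argument of Lemma \ref{lem: Lambda indep} that $\varphi$ is an invariant of the class is exactly the (implicit) point needed to make that reduction legitimate, since here the twist function is not fixed in advance but read off from the oper.
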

We would like to understand the behaviour of such quasi-canonical representatives under changes in local coordinate. We will come back to this in \S\ref{sec:  qc form curve} below. The first problem is to formulate the definition of $\Op_{\lg}(U)$ itself in a coordinate-independent fashion. 
Indeed, suppose $s:U\to \CC$ is another holomorphic coordinate on the same open set $U \subset \Sigma$, with $t= \mu(s)$. The connection in \eqref{nf} becomes
\be \nabla = d + p_{-1} \mu'(s) ds + b \mu'(s) ds.\label{nfp}\ee
This is no longer of the form \eqref{nf} in the new coordinate $s$, and in this sense the definition of $\op_{\lg}(U)$ is coordinate dependent. However, it is possible to re-express $\Op_{\lg}(U)$ as the quotient of a suitably larger affine space of connections $\widetilde\op_{\lg}(U)\supset \op_{\lg}(U)$, which itself is coordinate \emph{independent}, by some larger group of gauge transformations $\lhB_+(\K(U)) \supset \lhN_+(\K(U))$ to be defined below.

Indeed, let $\widetilde\op_{\lg}(U)$ be the affine space consisting of all connections of the form
\begin{equation} \label{tnf}
\widetilde \nabla = d + \left(\sum_{i=0}^\ell \psi_i\check f_i + b\right) dt
\end{equation}
with $\psi_i$ a nonzero element of $\K(U)$ for each $i\in I$, and $b\in \lhb_+(\K(U))$.
Observe that the definition of $\widetilde{\op}_{\lg}(U)$ is independent of the choice of coordinate. (The derivative $\mu'$ in \eqref{nfp} belongs to $\K(U)$ since it is holomorphic and non-vanishing on $U$.) 

Now we define the group $\lhB_+(\K(U))$ and its action on $\widetilde{\op}_{\lg}(U)$ by gauge transformations.
First, let $P\coloneqq \bigoplus_{i=0}^\ell \ZZ \Lambda_i\subset \lh$ denote the lattice of integral coweights of $\lg$, where  $\{\Lambda_i\}_{i=0}^\ell$ are the fundamental coweights of $\lg$ defined in \eqref{def: Lambda}. 
Let $\lH(\K(U))$ denote the abelian group generated by elements of the form $\phi^\lambda$, $\phi \in \K(U) \setminus \{ 0 \}$, $\lambda\in P$, subject to the relations $\phi^{\lambda} \psi^{\lambda} = (\phi\psi)^\lambda$, $\phi^{\lambda+\mu} = \phi^\lambda \phi^\mu$ for all $\phi,\psi\in \K(U)\setminus \{ 0 \}$ and $\lambda,\mu\in P$. (Note that this definition makes sense for \emph{any} open subset $U \subset \Sigma$, but to describe the action of the group $\lH(\K(U))$ on $\widetilde{\op}_{\lg}(U)$ we shall only need the case when $U$ is a coordinate chart.)

For each $\check\alpha\in \check Q$ in the root lattice of $\lg$, we have the (adjoint) action of the group $\lH(\K(U))$ on the space $\lg_{\check\alpha}(\K(U))$ of meromorphic functions on $U$ valued in the root space $\lg_{\check\alpha}$, given by
\begin{equation} \label{aa}
\phi^\lambda n \phi^{-\lambda} \coloneqq  \phi^{\langle \lambda, \check\alpha\rangle} n,
\end{equation}
for all $n\in \lg_{\check\alpha}(\K(U))$, $\phi \in \K(U) \setminus \{ 0 \}$ and $\lambda \in P$.
Here $\langle \lambda, \check\alpha\rangle\in \ZZ$, by definition of $P$, so that $\phi^{\langle \lambda, \check \alpha \rangle} \in \K(U)$. Hence we get an action on the Lie algebra $\lhn_+(\K(U))$. Then $\lH(\K(U))$ acts also on the group $\lhN_+(\K(U))$, with $\phi^\lambda\exp(n)\phi^{-\lambda} \coloneqq \exp(\phi^\lambda n\phi^{-\lambda})$.
We may now define the desired group to be the semi-direct product
\begin{equation*}
\lhB_+(\K(U)) \coloneqq \lhN_+(\K(U)) \rtimes \lH(\K(U)).
\end{equation*}
That is, $\lhB_+(\K(U))$ is the group generated by elements of the form $\exp(n) \phi^\lambda$ with $n\in \lhn_+(\K(U))$, $\phi\in \K(U) \setminus \{ 0 \}$ and $\lambda\in P$, with the group product given by
\begin{equation*}
(\exp(n) \phi^{\lambda})( \exp(m) \psi^{\mu}) \coloneqq \Big( \! \exp(n) \exp\big( \phi^{\lambda} m \phi^{-\lambda}\big) \Big) \big(\phi^\lambda \psi^\mu \big),
\end{equation*} 
for any $m, n\in \lhn_+(\K(U))$, $\phi, \psi \in \K(U) \setminus \{ 0 \}$ and $\lambda, \mu \in P$.

Finally, we define the gauge action of $\lH(\K(U))$ on connections in $\widetilde{\op}_{\lg}(U)$, of the form \eqref{tnf}, by
\begin{equation} \label{def: Haction}
\phi^{\lambda} \left(d + \sum_{i=0}^\ell \psi_i\check f_idt  + bdt\right) \phi^{-\lambda} \coloneqq d + \sum_{i=0}^\ell \phi^{-\langle\lambda,\check\alpha_i\rangle} \psi_i\check f_i dt - \lambda \phi^{-1} d\phi + \phi^{\lambda} b\phi^{-\lambda} dt,
\end{equation}
where, again, $ \phi^{\lambda} b\phi^{-\lambda}$ is defined by extending \eqref{aa} to $\lhb_+(\K(U))$ by linearity.
\begin{lemma} Equation \eqref{def: Haction} defines an action of the group $\lH(\K(U))$ on the space of connections $\widetilde\op_{\lg}(U)$. Combining it with the action of $\lhN_+(\K(U))$ defined as in \S\ref{sec: def oper}, we obtain a well-defined action of $\lhB_+(\K(U))$ on $\widetilde\op_{\lg}(U)$. \qed
\end{lemma}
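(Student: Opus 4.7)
The lemma contains two assertions: that \eqref{def: Haction} gives a well-defined action of the abelian group $\lH(\K(U))$ on $\widetilde\op_{\lg}(U)$, and that this action combines with the $\lhN_+(\K(U))$-action from \S\ref{sec: def oper} to yield an action of the semi-direct product $\lhB_+(\K(U))$. Both are purely formal verifications; I will sketch how to carry them out in turn.

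For the first assertion, I would first confirm that the right-hand side of \eqref{def: Haction} is again of the form \eqref{tnf}: each new $\check f_i$-coefficient $\phi^{-\langle\lambda,\check\alpha_i\rangle}\psi_i$ is a nonzero element of $\K(U)$, the term $-\lambda\phi^{-1}d\phi = -\lambda(\phi'/\phi)\,dt$ lies in $\lh(\K(U))dt\subseteq \lhb_+(\K(U))dt$, and by \eqref{aa} the conjugation $\phi^\lambda b \phi^{-\lambda}$ preserves $\lhb_+(\K(U))$ root-space by root-space. Next I would check that the two defining relations of $\lH(\K(U))$ are respected by the formula. The relation $\phi^{\lambda+\mu}=\phi^\lambda\phi^\mu$ follows from bilinearity of $\langle\cdot,\check\alpha_i\rangle$, additivity of the $\lh$-valued correction term, and the fact that any $\lh$-valued summand is unaffected by further adjoint action $\phi^\lambda(\cdot)\phi^{-\lambda}$, since $\lh$ is abelian. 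The relation $\phi^\lambda\psi^\lambda=(\phi\psi)^\lambda$ rests on the integer-power identity $(\phi\psi)^k=\phi^k\psi^k$ applied with $k=\langle\lambda,\check\alpha\rangle\in\ZZ$, together with the logarithmic Leibniz rule $(\phi\psi)^{-1}d(\phi\psi)=\phi^{-1}d\phi+\psi^{-1}d\psi$.

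For the second assertion, by the universal property of the semi-direct product it suffices to verify the compatibility
\begin{equation*}
h\cdot(g\cdot\widetilde\nabla) = (hgh^{-1})\cdot(h\cdot\widetilde\nabla)
\end{equation*}
for all $h=\phi^\lambda\in\lH(\K(U))$, $g=\exp(n)\in\lhN_+(\K(U))$ and $\widetilde\nabla\in\widetilde\op_{\lg}(U)$, where $hgh^{-1}=\exp(\phi^\lambda n\phi^{-\lambda})$. Writing $\widetilde\nabla=d+A$ and adopting the formal convention $(d\phi^\lambda)\phi^{-\lambda}\coloneqq \lambda\phi^{-1}d\phi$, this reduces, after the terms involving only $hAh^{-1}$ match trivially, to the algebraic identity
\begin{equation*}
(dg')(g')^{-1}+g'(dh)h^{-1}(g')^{-1}=h(dg)g^{-1}h^{-1}+(dh)h^{-1},\qquad g'\coloneqq hgh^{-1},
\end{equation*}
which in turn is a consequence of the Leibniz rule $d(hgh^{-1})=(dh)gh^{-1}+h(dg)h^{-1}-g'(dh)h^{-1}$ together with the definition of inversion.

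The main technical point to be careful about is justifying this Leibniz rule in the pro-completed setting. On a root-space element $n_{\check\alpha}\in\lg_{\check\alpha}(\K(U))$ one computes directly from \eqref{aa} that $hn_{\check\alpha}h^{-1}=\phi^{\langle\lambda,\check\alpha\rangle}n_{\check\alpha}$, whose $d$ produces exactly $[(dh)h^{-1},hn_{\check\alpha}h^{-1}]+h(dn_{\check\alpha})h^{-1}$ by the ordinary product rule on meromorphic functions. Linearity extends this to $\lhn_+(\K(U))$, and the series expansion \eqref{gauge transf a} propagates it to the exponential level. Once this is in hand, extending the compatibility from the generating pairs $(\phi^\lambda,\exp(n))$ to arbitrary elements of $\lhB_+(\K(U))$ is automatic, using Part 1 and the well-definedness of the $\lhN_+(\K(U))$-action already established in \S\ref{sec: def oper}.
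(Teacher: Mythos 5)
Your proposal is correct. The paper gives no proof of this lemma (it is stated with the verification left as a direct check), and your outline—verifying that the right-hand side of \eqref{def: Haction} stays in $\widetilde\op_{\lg}(U)$, that the two defining relations of $\lH(\K(U))$ are respected, and then the semidirect-product compatibility $h\cdot(g\cdot\widetilde\nabla)=(hgh^{-1})\cdot(h\cdot\widetilde\nabla)$ via the component-wise identity $d(hn_{\check\alpha}h^{-1})=[(dh)h^{-1},hn_{\check\alpha}h^{-1}]+h(dn_{\check\alpha})h^{-1}$ propagated through \eqref{gauge transf a}—is precisely the intended routine verification; the only point you leave implicit, namely that the transformations assigned to distinct generators $\phi^\lambda$, $\psi^\mu$ commute (as required since $\lH(\K(U))$ is abelian), follows at once from the same observations (coefficients multiply, the $\lh$-valued terms add and are inert under the adjoint action \eqref{aa}).
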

\begin{lemma}\label{lem: Uop}
The space of meromorphic $\lg$-opers on a coordinate chart $U$ is equal to the quotient of $\widetilde \op_{\lg}(U)$ by this gauge action of $\lhB_+(\K(U))$: 
\begin{equation*}
\Op_{\lg}(U) = \widetilde\op_{\lg}(U) \big/ \lhB_+(\K(U)).
\end{equation*}
\end{lemma}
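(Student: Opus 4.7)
The plan is to establish the equality by showing that the natural inclusion $\op_{\lg}(U) \hookrightarrow \widetilde\op_{\lg}(U)$ induces a bijection $\op_{\lg}(U)/\lhN_+(\K(U)) \SimTo \widetilde\op_{\lg}(U)/\lhB_+(\K(U))$. This amounts to two claims: (a) every $\lhB_+(\K(U))$-orbit in $\widetilde\op_{\lg}(U)$ meets $\op_{\lg}(U)$, and (b) two elements of $\op_{\lg}(U)$ lying in the same $\lhB_+(\K(U))$-orbit already lie in the same $\lhN_+(\K(U))$-orbit.

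For (a), given $\widetilde\nabla = d + \bigl( \sum_{i=0}^\ell \psi_i \check f_i + b \bigr) dt \in \widetilde\op_{\lg}(U)$, I will apply the explicit gauge transformation $g = \prod_{i=0}^\ell \psi_i^{\Lambda_i} \in \lH(\K(U)) \subset \lhB_+(\K(U))$. Using \eqref{def: Haction} together with the duality $\langle \Lambda_j, \check\alpha_i \rangle = \delta_{ij}$ from \eqref{def: Lambda}, the new coefficient of $\check f_i$ in $g \cdot \widetilde\nabla$ becomes $\prod_j \psi_j^{-\delta_{ij}} \psi_i = \psi_i^{-1} \psi_i = 1$ for every $i \in I$. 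The accumulated gauge derivative term $-\sum_j \Lambda_j \psi_j^{-1} d\psi_j$ takes values in $\lh \subset \lhb_+$, and the adjoint action rescales each positive root space of $b$ by a scalar and so keeps $b$ in $\lhb_+(\K(U))$. Therefore $g \cdot \widetilde\nabla \in \op_{\lg}(U)$.

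For (b), I will first exploit the defining relations of $\lH(\K(U))$ together with the $\ZZ$-basis $\{\Lambda_0, \ldots, \Lambda_\ell\}$ of $P$ to obtain a canonical isomorphism $\lH(\K(U)) \simeq (\K(U)^\times)^{\ell+1}$, so that every $h \in \lH(\K(U))$ has a unique expression $h = \prod_{i=0}^\ell \phi_i^{\Lambda_i}$. Via the semidirect product structure, each $g \in \lhB_+(\K(U))$ factorises uniquely as $g = nh$ with $n \in \lhN_+(\K(U))$ and $h \in \lH(\K(U))$, and its gauge action on a connection reads $g \cdot \nabla = n \cdot (h \cdot \nabla)$. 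The crucial point is that the gauge action of $\lhN_+(\K(U))$ preserves the grade-$(-1)$ component of any connection: when $n = \exp(m)$ with $m \in \lhn_+(\K(U))$, the expansions in \eqref{gauge transf} show that both $n p_{-1} n^{-1} - p_{-1}$ and $(dn) n^{-1}$ have all terms of non-negative principal grade, since $\lhn_+$ sits in strictly positive grade. Consequently, if $\nabla, g\cdot\nabla \in \op_{\lg}(U)$ then already $h \cdot \nabla \in \op_{\lg}(U)$, and reading off the grade-$(-1)$ coefficient via \eqref{def: Haction} forces $\phi_i^{-1} = 1$ for every $i$, so $h = 1$ and $g = n \in \lhN_+(\K(U))$.

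I do not anticipate any serious obstacle: both halves reduce to elementary bookkeeping with the principal gradation of $\lg$ and the definitions in \eqref{def: Haction} and \eqref{gauge transf}. The only mildly delicate point is tracking the semidirect product decomposition $g = nh$ and the correct ordering of the gauge action, but once the ``$\lhN_+(\K(U))$ cannot alter grade $-1$'' lemma is isolated, both surjectivity and injectivity are immediate, yielding the desired identification $\Op_{\lg}(U) = \widetilde\op_{\lg}(U)/\lhB_+(\K(U))$.
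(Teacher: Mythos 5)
Your proposal is correct and follows essentially the same route as the paper: the paper's (very terse) proof likewise normalises the coefficients of the $\check f_i$ by the gauge element $\prod_{i=0}^\ell \psi_i^{\Lambda_i} \in \lH(\K(U))$ and observes that this gives the unique representative of the $\lH(\K(U))$-orbit lying in $\op_{\lg}(U)$, which combined with the semidirect product structure yields the identification of quotients. Your write-up merely makes explicit the bookkeeping the paper leaves implicit, namely the factorisation $g = nh$ and the fact that $\lhN_+(\K(U))$-gauge transformations cannot change the grade $-1$ component.
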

\begin{proof}
Let $\widetilde \nabla \in \widetilde \op_{\lg}(U)$ be as in \eqref{tnf}. On inspecting \eqref{def: Haction}, we see that its $\lH(\K(U))$-orbit has a unique representative in $\op_{\lg}(U)$, namely
$( \prod_{i=0}^\ell \psi_i^{\Lambda_i} ) \widetilde \nabla ( \prod_{i=0}^\ell \psi_i^{-\Lambda_i} )$.
\end{proof}

\begin{remark}
If we were to replace $P$ by $P\oplus \CC\delta$ in the definition of ${}^L\tilde H(\K(U))$ then the quotient $\widetilde\op_{\lg}(U) \big/ \lhB_+(\K(U))$ would be smaller than $\Op_{\lg}(U)$; in fact it would be isomorphic to $\Op_{\lg/\CC\delta}(U)$.
\end{remark}

Now suppose $U \subset \Sigma$ is \emph{any} open subset, not necessarily a coordinate chart. Let $\{ U_\alpha \}_{\alpha \in A}$ be an open cover of $U$ by coordinate charts, \emph{i.e.} open subsets $U_\alpha \subset \Sigma$ for each $\alpha$ in some indexing set $A$ with holomorphic coordinates $t_\alpha : U_\alpha \to \CC$ such that $U = \cup_{\alpha \in A} U_\alpha$. We define $\Op_{\lg}(U)$ to be the set of collections $\{ [\nabla_\alpha] \in \Op_{\lg}(U_\alpha) \}_{\alpha \in A}$ with the following property:  for any pair of overlapping charts $U_\alpha \cap U_\beta \neq \emptyset$ and any choice of representatives $\nabla_\alpha \in \widetilde{\op}_{\lg}(U_\alpha)$ and $\nabla_\beta \in \widetilde{\op}_{\lg}(U_\beta)$, their restrictions $\nabla_\alpha|_{U_\alpha \cap U_\beta}$ and $\nabla_\beta|_{U_\alpha \cap U_\beta}$ define the same class in $\Op_{\lg}(U_\alpha \cap U_\beta)$. That is, the pair of representatives $\nabla_\alpha \in \widetilde{\op}_{\lg}(U_\alpha)$ and $\nabla_\beta \in \widetilde{\op}_{\lg}(U_\beta)$ considered on the overlap $U_\alpha \cap U_\beta$ are related by a gauge transformation in $\lhB_+(\K(U_\alpha \cap U_\beta))$. Since $\lhB_+(\K(U_\alpha))$ and $\lhB_+(\K(U_\beta))$ are naturally subgroups of $\lhB_+(\K(U_\alpha \cap U_\beta))$, the above property does not depend on the choice of representatives of the $\lg$-opers $[\nabla_\alpha] \in \Op_{\lg}(U_\alpha)$ for each $\alpha \in A$. This defines the \emph{sheaf of $\lg$-opers} $\Op_{\lg}$.

\subsection{Quasi-canonical form} \label{sec:  qc form curve}

Let $U \subset \Sigma$ be open and $[\nabla] \coloneqq \{ [\nabla_\alpha] \in \Op_{\lg}(U_\alpha) \}_{\alpha \in A}$ be an element of $\Op_{\lg}(U)$. Call $\widetilde{\nabla} \coloneqq \{ \widetilde{\nabla}_\alpha \in \op_{\lg}(U_\alpha) \}_{\alpha \in A}$ a representative of $[\nabla]$ if $[\widetilde{\nabla}_\alpha] = [\nabla_\alpha]$ for each $\alpha \in A$. We shall say that this representative is in \emph{quasi-canonical form} if for each $\alpha \in A$, $\widetilde{\nabla}_\alpha$ is a quasi-canonical form as in Theorem \ref{thm: qcU} with respect to the local coordinate $t_\alpha : U_\alpha \to \CC$, \emph{i.e.} for each $\alpha \in A$ we have
\begin{equation} \label{oper rep Ualpha}
\widetilde{\nabla}_\alpha = d + \Bigg( p_{-1} - \frac{\varphi_\alpha(t_\alpha)}{h^\vee} \rho + \sum_{i\in E} v_{\alpha, i}(t_\alpha) p_i \Bigg) dt_\alpha
\end{equation}
for some $\varphi_\alpha \in \K(U_\alpha)$ and $v_{\alpha, i} \in \K(U_\alpha)$, $i \in E$. In this section we identify which sheaves the collections of functions $\{ \varphi_\alpha \}_{\alpha \in A}$ and $\{ v_{\alpha, i} \}_{\alpha \in A}$, $i \in E$ define sections of.

It suffices to consider an open subset $U \subset \Sigma$ equipped with a pair of holomorphic coordinates $t : U \to \CC$ and $s : U \to \CC$, and to determine the gauge transformation parameter in $\lhB_+(\K(U))$ relating quasi-canonical forms in each coordinate. In the above notation, $U$ corresponds to the overlap $U_\alpha \cap U_\beta$ of the open sets $U_\alpha$ and $U_\beta$ with coordinates $t = t_\alpha : U_\alpha \to \CC$ and $s = t_\beta : U_\beta \to \CC$, respectively. So suppose that we start with a representative of an $\lg$-oper $[\nabla]\in \Op_{\lg}(U)$ which is in quasi-canonical form in the $t$ coordinate, as in Theorem \ref{thm: qcU}: 
\begin{equation*}
\nabla = d + p_{-1} dt - \frac{\varphi(t)}{h^\vee} \rho dt + \sum_{i\in E} v_i(t) p_i dt.
\end{equation*}
In terms of the other coordinate $s$ with $t=\mu(s)$ we have
\begin{equation*}
\nabla = d + p_{-1} \mu'(s) ds - \frac{\varphi(\mu(s))}{h^\vee} \rho \mu'(s)ds + \sum_{i\in E} v_i(\mu(s)) p_i \mu'(s) ds.
\end{equation*}
This can be brought into quasi-canonical form in the $s$ coordinate by performing a gauge transformation by  $\mu'(s)^\rho\in \lH(\K(U))$. Indeed, one finds that 
\begin{subequations}\label{ccp}
\begin{equation*}
\mu'(s)^\rho \,\nabla\, \mu'(s)^{-\rho} =  
d + p_{-1} ds - \frac{\tilde\varphi(s)}{h^\vee} \rho ds  + \sum_{i\in E} \tilde v_i(s) p_i ds
\end{equation*}
making use of the second relation in \eqref{Lie alg a com rel}, and where we defined
\begin{align} \label{phipt}
- \frac{1}{h^\vee} \tilde\varphi(s) &\coloneqq - \frac{1}{h^\vee} \varphi(\mu(s))\mu'(s) - \frac{\mu''(s)}{\mu'(s)},\\
\tilde v_i(s)  &\coloneqq v_i(\mu(s)) \mu'(s)^{i+1}, \qquad i\in E.\label{vpt}
\end{align}
\end{subequations}
Here we will interpret the transformation property \eqref{phipt} as well as \eqref{vpt} in the case $i = 1$. We will come back to \eqref{vpt} for $i \in E_{\geq 2}$ in \S\ref{sec: twisted coh curve} below.

We shall need the following notation.
For any $k \in \ZZ$, let us denote by $\Omega^k \coloneqq \Omega^{\otimes k}$ the $k^{\rm th}$ tensor power\footnote{Recall that over $\CP$, $\Omega^k$ is defined for all $k\in \ZZ/2$. In particular $\Omega^{1/2}$ is the tautological line bundle. For our purposes with $\lg$ affine we shall need only integer powers.}  of the canonical line bundle 
(\emph{i.e.} the cotangent bundle) 
$\Omega$ over $\Sigma$.
We denote by $U \mapsto \Gamma(U,\Omega^k)$ the sheaf of meromorphic sections of $\Omega^k$.
Also let $U \mapsto \Conn(U, \Omega)$ denote the sheaf of meromorphic connections on $\Omega$.

\begin{theorem}\label{thm: ct}
Let $U \subset \Sigma$ be open and $\nabla$ be any quasi-canonical form of an $\lg$-oper $[\nabla] \in \Op_{\lg}(U)$.
The coefficient of $\rho$ is the component of a connection in $\Conn(U, \Omega)$.
\end{theorem}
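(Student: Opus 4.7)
The plan is to recognise \eqref{phipt} as precisely the cocycle identity satisfied by a meromorphic connection on the canonical bundle $\Omega$ under a change of coordinate. The main content is therefore already contained in the computation leading to \eqref{ccp}; what remains is to interpret it correctly.

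First I would work locally. Fix a cover of $U$ by coordinate charts $\{U_\alpha\}_{\alpha \in A}$ with holomorphic coordinates $t_\alpha$, and, using Theorem \ref{thm: qcU}, choose a quasi-canonical representative \eqref{oper rep Ualpha} of $[\nabla]$ on each $U_\alpha$, so that the coefficient of $\rho$ equals $-\varphi_\alpha/h^\vee$ for some $\varphi_\alpha \in \K(U_\alpha)$. By the second part of Theorem \ref{thm: quasi-canonical form}, the only residual freedom in choosing a quasi-canonical representative within a fixed coordinate is a gauge transformation in $\exp(\hat\a_{\geq 2}(\K(U_\alpha)))$, and such transformations leave the $\rho$-coefficient invariant. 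Hence the meromorphic $1$-form $\omega_\alpha \coloneqq -\frac{\varphi_\alpha}{h^\vee}\, dt_\alpha$ on $U_\alpha$ is intrinsically attached to $[\nabla]|_{U_\alpha}$.

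Next I would compare these $1$-forms on overlaps. On $U_\alpha \cap U_\beta$, with $t_\alpha = \mu(t_\beta)$, the calculation \eqref{ccp} (applied with $t = t_\alpha$, $s = t_\beta$) shows that gauge-transforming the $t_\alpha$-quasi-canonical representative by $\mu'(t_\beta)^\rho \in \lH(\K(U_\alpha \cap U_\beta))$ produces a representative of $[\nabla]$ in the coordinate $t_\beta$ which is again in quasi-canonical form, and whose $\rho$-coefficient is related to the original one by \eqref{phipt}. Rewriting \eqref{phipt} as an equality of $1$-forms gives
\[
\omega_\beta \;=\; \omega_\alpha \;-\; \frac{\mu''(t_\beta)}{\mu'(t_\beta)}\, dt_\beta.
\]
Any remaining discrepancy between this and the originally chosen $t_\beta$-quasi-canonical representative on $U_\beta$ is absorbed by a residual gauge in $\exp(\hat\a_{\geq 2}(\K(U_\alpha \cap U_\beta)))$, which again preserves the $\rho$-coefficient.

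Finally I would check that this is precisely the cocycle relation defining a connection on $\Omega$. The two frames $dt_\alpha, dt_\beta$ of $\Omega$ over $U_\alpha \cap U_\beta$ are related by $dt_\alpha = g_{\alpha\beta}\, dt_\beta$ with $g_{\alpha\beta} = \mu'(t_\beta)$, and a short direct calculation (differentiating the frame relation) shows that the local $1$-forms of a meromorphic connection on $\Omega$ satisfy $\omega_\beta = \omega_\alpha - g_{\alpha\beta}^{-1}\, dg_{\alpha\beta}$, which in this case equals $\omega_\alpha - \frac{\mu''(t_\beta)}{\mu'(t_\beta)}\, dt_\beta$, matching the relation above. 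Therefore the $\{\omega_\alpha\}$ glue to a global element of $\Conn(U, \Omega)$. The only subtle point to pin down is that the calculation \eqref{ccp} was carried out for a specific pair of quasi-canonical representatives, whereas the statement concerns any such choice on each chart; but the ambiguity on each side lies in $\exp(\hat\a_{\geq 2})$, which acts trivially on the coefficient of $\rho$, so no further work is needed.
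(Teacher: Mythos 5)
Your proposal is correct and takes essentially the same route as the paper: well-definedness of the $\rho$-coefficient (the residual freedom $\exp(\hat\a_{\geq 2})$ in the quasi-canonical form does not touch it) together with the identification of the transformation law \eqref{phipt}, produced by the gauge transformation $\mu'(s)^\rho$ in \eqref{ccp}, with that of a connection on $\Omega$. The paper verifies this last point by computing how the component $A$ acting on section coefficients must transform ($\tilde A(s) = A(t)\mu'(s) - k\,\mu''(s)/\mu'(s)$ with $k=1$), which is exactly your frame/cocycle relation $\omega_\beta = \omega_\alpha - g_{\alpha\beta}^{-1}dg_{\alpha\beta}$ with $g_{\alpha\beta} = \mu'$.
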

\begin{proof}
The coefficient of $\rho$ in $\nabla$, or to be more precise the collection of coefficients of $\rho$ for every $\nabla_\alpha \in \op_{\lg}(U_\alpha)$ where $\nabla = \{ \nabla_\alpha \}_{\alpha \in A}$ relative to a cover $\{ U_\alpha \}_{\alpha \in A}$ of $U$, is independent of the choice of quasi-canonical form $\nabla$ by Lemma \ref{lem: Lambda indep}.

In the local trivialization defined by the coordinate $t$, a meromorphic section of $\Omega^{k}$ is given by a meromorphic function $f(t)$, and a connection $\Gamma(U,\Omega^{k}) \to \Gamma(U, \Omega \ox \Omega^{k})$ is a differential operator $f(t)\mapsto f'(t) + A(t) f(t)$, specified by a meromorphic function $A(t)$, the component of the connection in this local trivialization. Here $f'(t) + A(t) f(t)$ must transform as a section of $\Omega \ox \Omega^{k}$, which is to say that 
\be \tilde f'(s) + \tilde A(s) \tilde f(s) = \left( f'(t) + A(t)f(t)\right) \mu'(s)^{k+1}.\nn\ee 
Now in fact 
\begin{align*}
\tilde f'(s) + \tilde A(s) \tilde f(s) 
&= \del_s\big( f(t) \mu'(s)^{k}\big) + \tilde A(s) f(t) \mu'(s)^k\\
&= f'(t) \mu'(s)^{k+1} + k f(t) \mu'(s)^{k-1} \mu''(s)+ \tilde A(s) f(t) \mu'(s)^k
\end{align*}
and we see that indeed $A$ must transform as $\tilde A(s) = A(t) \mu'(s) - k \mu''(s)/\mu'(s)$. On comparing with \eqref{phipt} the 
result follows.
\end{proof}

\subsection{Interlude: Comparison with finite type opers}\label{fto}
Before proceeding, it is interesting to compare the transformation properties \eqref{ccp} with those of opers of finite type. To that end, we now briefly recall the situation for finite type opers. Suppose, for this subsection only, that $\lg$ is of finite type. Let $U \subset \Sigma$ be an open subset with two coordinates $t : U \to \CC$ and $s : U \to \CC$. Define $\op_{\lg}(U)$ to be the affine space of connections of the form 
\begin{equation*}
\nabla \coloneqq d + \bar p_{-1}dt  + b  dt, \quad b\in \lb_+(\K(U)).
\end{equation*}
with $\bar p_{-1} \coloneqq \sum_{i=1}^\ell \check f_i$, and define $\widetilde\op_{\lg}(U)$ to be the affine space  of connections of the form
\begin{equation*}
\widetilde \nabla \coloneqq d + \left(\sum_{i=1}^\ell \psi_i\check f_i + b\right) dt
\end{equation*}
with $\psi_i$ a nonzero element of $\K(U)$ for each $i\in I\setminus\{0\}$, and with $b\in \lb_+(\K(U))$. (In finite type $\lhb_+=\lb_+$ since $\ln_m=\{0\}$ for all $m \geq h^\vee$.) Then the definition of the space $\Op_{\lg}(U)$ of $\lg$-opers on $U$ in (\ref{def: Uop}) and Lemma \ref{lem: Uop} remains correct as written. Let $\bar\rho \coloneqq \sum_{i=1}^\ell \bar \Lambda_i$ be the sum of the fundamental coweights $\bar\Lambda_i$ of $\lg$.
There is a unique element $\bar p_1\in \ln_+$ such that $\{\bar p_{-1}, 2 \bar \rho, \bar p_1\}$ form an $\mathfrak{sl}_2$-triple:
\begin{equation} \label{sl2 triple finite}
[\bar p_{-1}, \bar p_1] = -2\bar\rho, \qquad [2\bar \rho, \bar p_{\pm 1}] = \pm 2 \bar p_{\pm 1}.
\end{equation}
The analogue of Theorem \ref{thm: qcU} in finite type is the following statement:
Each gauge equivalence class $[\nabla] \in \Op_{\lg}(U)$ contains a unique representative of the form
\begin{equation} \label{fcf}
\nabla = d + \bar p_{-1} dt + \sum_{i\in \bar E} \bar v_i(t) \bar p_i dt,
\end{equation}
where the (multi)set $\bar E$ of exponents is now finite and where, for each $i\in \bar E$ we have $\bar v_i \in \K(U)$ and $\bar p_i\in \ln_+$ are elements such that  
\begin{equation} \label{finite pi properties}
[\bar p_{1}, \bar p_i] = 0, \qquad [\bar \rho, \bar p_i] = i \bar p_i.
\end{equation}
In terms of the new coordinate $s$ with $t=\mu(s)$ we have
\begin{equation*}
\nabla = d + \bar p_{-1} \mu'(s) ds + \sum_{i\in \bar E} \bar v_i(\mu(s)) \bar p_i \mu'(s) ds.
\end{equation*}
Similarly to the affine case above, we may first perform a gauge transformation by $\mu'(s)^{\bar \rho}\in \lH(\K(U))$ to bring the $\bar p_{-1}$ term into the canonical form $\bar p_{-1} ds$, namely
\begin{equation*}
\mu'(s)^{\bar \rho} \,\nabla\, \mu'(s)^{-\bar \rho} =  
d + \bar p_{-1} ds - \bar \rho \,\frac{\mu''(s)}{\mu'(s)} ds  + \sum_{i\in \bar E} \bar v_i(\mu(s)) \bar p_i \mu'(s)^{i+1}  ds.
\end{equation*}
However, in contrast to the affine case, we are not yet done, because it is necessary -- in order to reach the canonical form -- to remove the $\bar \rho$ term by performing a further gauge transformation by $\exp\big(\bar p_1 \frac{\mu''(s)}{2 \mu'(s)}\big)$. One finds that
\begin{equation} \label{transition can to can}
\exp\left(\bar p_1 \frac{\mu''(s)}{2 \mu'(s)}\right) \mu'(s)^{\bar\rho} \,\nabla\, \mu'(s)^{-\bar\rho} \exp\left( \! -\bar p_1 \frac{\mu''(s)}{2 \mu'(s)}\right)\\
= d + \bar p_{-1} ds + \sum_{i\in \bar E} \tilde{\bar v}_i(s) \bar p_i ds
\end{equation}
using both the relations \eqref{finite pi properties} and \eqref{sl2 triple finite}, and where we defined
\begin{subequations} \label{ccpfinite}
\begin{align}
\tilde{\bar v}_1(s) &\coloneqq \bar v_1(\mu(s)) \mu'(s)^2 - \ha (S\mu)(s),\\
\tilde{\bar v}_i(s) &\coloneqq \bar v_i(\mu(s)) \mu'(s)^{i+1}, \qquad i\in \bar E, i>1.
\end{align}
\end{subequations}
Here $S\mu$ is the \emph{Schwarzian derivative} of $\mu$,
\begin{equation*}
S\mu \coloneqq \frac{\mu'''}{\mu'} - \frac{3}{2} \left( \frac{\mu''}{\mu'} \right)^2.
\end{equation*}

Now, what \eqref{ccpfinite} shows is that each of the $\bar v_i$, $i > 1$ transforms as a section of the power $\Omega^{i+1}$ of the canonical bundle, but that $\bar v_1$ transforms as a \emph{projective connection}; see \cite{Fopersontheprojectiveline, Fre07}. Since \eqref{fcf} was the unique canonical form of the $\lg$-oper in this chart, that means there is an isomorphism, in finite type $\lg$, 
\be \Op_{\lg}(U) \simeq \Proj(U) \times \prod_{\substack{i\in \bar E\\ i>1}} \Gamma(U,\Omega^{i+1}),\label{ft op}\ee 
where $\Proj(U)$ denotes the space of projective connections on $U$.

\begin{remark}
Let us emphasise why, in the affine case, there is no analogue of the second gauge transformation by $\exp\big(\bar p_1 \frac{\mu''(s)}{2 \mu'(s)}\big)$ performed above. When $\lg$ is of affine type it is the central element $\delta$ (and not $\rho$) which appears in the bracket $[p_1,p_{-1}] = \delta$. The derivation element $\rho$ is not in the derived subalgebra. Hence, as we saw in Lemma \ref{lem: Lambda indep}, there is no way to remove the term $- \varphi/h^\vee \rho \, dt$ from a quasi-canonical form via $\lhN_+(\K(U))$-valued gauge transformations. Rather, the twist function $\varphi$ forms part of the data defining the underlying $\lg$-oper (and Theorem \ref{thm: ct} gives its properties under coordinate transformations).
\end{remark}

\subsection{Twisted cohomologies} \label{sec: twisted coh curve}
Now we return to the case in which $\lg$ is of affine type. We would like to give a coordinate-independent description of the space of affine opers  analogous to \eqref{ft op} in finite types.

Let $[\nabla] \in \Op_{\lg}(U)$ be a meromorphic $\lg$-oper on an open subset $U \subset \Sigma$ and let $\nabla$ be a representative in quasi-canonical form.
According to Theorem \ref{thm: ct}, the coefficient of $\rho$ in $\nabla$ defines a (trivially flat, since we are working on a curve) connection on $\Omega$ over $U$, which we denote by
\begin{equation*}
\nabla|_\rho : \Gamma(U, \Omega) \longrightarrow \Gamma(U, \Omega \otimes \Omega),
\end{equation*}
If $t : U \to \CC$ is a coordinate on $U$ then it can be written as $f \mapsto \nabla|_\rho f = df - {h^\vee}^{-1} \varphi f dt$ for some $\varphi \in \K(U)$.
We therefore obtain a surjective map
\begin{equation} \label{Op to Conn}
\Op_{\lg}(U) \longtwoheadrightarrow \Conn(U, \Omega), \qquad [\nabla] \longmapsto \nabla|_\rho
\end{equation}
into the space of meromorphic connections on $\Omega$ over $U$. Given any $\overline{\nabla} \in \Conn(U, \Omega)$, we denote its preimage in $\Op_{\lg}(U)$ under the map \eqref{Op to Conn} by $\Op_{\lg}(U)^{\overline{\nabla}}$. This can be seen as a coordinate-independent version, over an open subset of an arbitrary curve $U \subset \Sigma$, of the space $\Op_{\lg}(\CP)^\varphi$ introduced in \S\ref{sec: twist function}.

For each $j \in \ZZ$, $\nabla|_\rho$ induces a connection on the line bundle $\Omega^j$,
\begin{equation*}
\nabla|_\rho : \Gamma(U, \Omega^j) \longrightarrow \Gamma(U, \Omega \otimes \Omega^j).
\end{equation*}
In a local coordinate $t : U \to \CC$ it takes the form $f \mapsto \nabla|_\rho f = df - j {h^\vee}^{-1} \varphi f dt$.
The transformation property \eqref{vpt} suggests that the coefficient of $p_j$ in $\nabla$, for each $j \in E$, defines a section of $\Omega^{j+1}$. This is indeed the case for $j=1$ since the coefficient of $p_1$ in $\nabla$, \emph{i.e.} the collection of coefficients of $p_1$ for every $\nabla_\alpha \in \op_{\lg}(U_\alpha)$, is independent of the choice of quasi-canonical form $\nabla$ by Proposition \ref{prop: can form u1}. However, it is not quite true for $j\in E_{\geq 2}$ since, unlike the coefficient $\varphi_\alpha$ of $\rho$ and $v_{\alpha, 1}$ of $p_1$ in \eqref{oper rep Ualpha}, the coefficient $v_{\alpha, j}$ of $p_j$, $j \in E_{\geq 2}$ in each chart $U_\alpha$ depends on the choice of quasi-canonical representative $\nabla_\alpha$ by Theorem \ref{thm: qcU}. More precisely, as formulated in Theorem \ref{thm: quasi-canonical form}, the coefficient of $p_j$ in $\nabla$ is defined up to the addition of an element $\nabla|_\rho f$.

Recall that a \emph{local system} is a vector bundle equipped with a flat connection. In our case, for each $j \in E$ we have the local system $(\Omega^j, \nabla|_\rho)$ consisting of the line bundle $\Omega^j$ equipped with the connection $\nabla|_\rho$.
Given any local system one has the associated de Rham complex with coefficients in that local system. In our case it is
\begin{equation*}
0 \longrightarrow \Gamma(U, \Omega^j) \xrightarrow{\nabla|_\rho} \Gamma(U, \Omega \otimes \Omega^j) 
\longrightarrow 0.
\end{equation*}
The \emph{de Rham cohomology of $U$ with coefficients in $(\Omega^j, \nabla|_\rho)$}, or simply the \emph{twisted de Rham cohomology}, is then by definition the quotient
\begin{equation*}
H^1(U,\Omega^j, \nabla|_\rho) \coloneqq \Gamma(U,\Omega \otimes \Omega^j) 
                     \big/\nabla|_\rho \Gamma(U, \Omega^j).
\end{equation*}
We now see that, for each $j \in E_{\geq 2}$, the coefficient of $p_j$ in $\nabla$ defines an element of this twisted de Rham cohomology which is independent of the choice of quasi-canonical form $\nabla$. In other words, we have the following analogue of Theorem \ref{thm: ct}.

\begin{proposition} \label{prop: coeff pj}
Let $U \subset \Sigma$ be open and $\nabla$ be any quasi-canonical form of an $\lg$-oper $[\nabla] \in \Op_{\lg}(U)$. For each $j\in E_{\geq 2}$, the coefficient of $p_j$ belongs to $H^1(U, \Omega^j, \nabla|_\rho)$. The coefficient of $p_1$ belongs to $\Gamma(U, \Omega^2)$. \qed
\end{proposition}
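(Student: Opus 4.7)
The plan is to reduce the two claims to the local transformation laws already established, namely \eqref{vpt} for change of coordinate and \eqref{vuptof} for the residual gauge freedom preserving quasi-canonical form.

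First I would dispose of the coefficient of $p_1$. By Proposition \ref{prop: can form u1} together with Lemma \ref{lem: Lambda indep}, the coefficient $v_{\alpha, 1}$ in each chart $U_\alpha$ depends only on $[\nabla_\alpha]$, so there is no residual-gauge ambiguity to worry about. The specialization of \eqref{vpt} to $i = 1$ reads $\tilde v_1(s) = v_1(\mu(s))\,\mu'(s)^{2}$, which is precisely the transition rule for a meromorphic section of $\Omega^{2}$. Hence the local data $\{v_{\alpha,1}\,dt_\alpha^{\,2}\}_{\alpha\in A}$ patches to a global element of $\Gamma(U,\Omega^{2})$.

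Next, for $j \in E_{\geq 2}$, I would first note, again from \eqref{vpt}, that the local sections $v_{\alpha,j}\,dt_\alpha^{\,j+1}$ transform on overlaps by $\tilde v_j(s) = v_j(\mu(s))\,\mu'(s)^{j+1}$, \emph{i.e.} as meromorphic sections of $\Omega^{j+1} = \Omega\otimes\Omega^{j}$. The work is then to translate the residual gauge ambiguity \eqref{vuptof} into the statement that we are passing to the twisted de Rham cohomology $H^1(U,\Omega^j,\nabla|_\rho)$. If we view a gauge parameter $f_j$ as representing a local section $f_j\,dt_\alpha^{\,j} \in \Gamma(U_\alpha,\Omega^{j})$, then the connection $\nabla|_\rho$ on $\Omega^j$ acts in this trivialization as
\begin{equation*}
f_j \longmapsto \nabla|_\rho\bigl(f_j\,dt_\alpha^{\,j}\bigr) = \Bigl(f_j' - \frac{j\,\varphi_\alpha}{h^\vee}\,f_j\Bigr)\,dt_\alpha^{\,j+1},
\end{equation*}
which is exactly the negative of the shift of $v_{\alpha,j}\,dt_\alpha^{\,j+1}$ prescribed by \eqref{vuptof}. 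Thus within each chart the coefficient of $p_j$ defines a well-defined class in $H^1(U_\alpha,\Omega^j,\nabla|_\rho)$.

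The last, and main, ingredient is a compatibility check on overlaps: that if one chooses quasi-canonical representatives $\nabla_\alpha$ and $\nabla_\beta$ on $U_\alpha$ and $U_\beta$, then on $U_\alpha\cap U_\beta$ the representative obtained from $\nabla_\alpha$ by the coordinate-change gauge $\mu'(s)^\rho$ (see \eqref{ccp}) differs from $\nabla_\beta$ by a residual $\lhN_+$-gauge transformation whose parameter $f_j$ transforms tensorially as a section of $\Omega^j$. This follows by expressing $\mu'(s)^\rho\,\exp(f_j p_j)\,\mu'(s)^{-\rho} = \exp\bigl(\mu'(s)^{j} f_j\,p_j\bigr)$, using $[\rho,p_j] = j p_j$ from \eqref{Lie alg a com rel}; the factor $\mu'(s)^{j}$ is precisely the transition factor for $\Omega^j$. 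Hence the local classes in $H^1(U_\alpha,\Omega^j,\nabla|_\rho)$ glue to a single class in $H^1(U,\Omega^j,\nabla|_\rho)$, completing the proposition.

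The main obstacle, if any, is the final bookkeeping step: ensuring that the gauge parameters $f_j$ are genuinely the local trivializations of a section of $\Omega^j$ rather than just arbitrary functions, so that the resulting quotient is really the twisted cohomology of $(\Omega^j,\nabla|_\rho)$. The identity $[\rho,p_j]=jp_j$ makes this work cleanly, so the argument is essentially tautological once the correct bundle is identified.
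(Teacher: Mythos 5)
Your proof is correct and follows essentially the same route as the paper, which establishes the proposition by combining the uniqueness of $v_1$ (Proposition \ref{prop: can form u1}), the coordinate-change law \eqref{vpt} obtained from the gauge transformation by $\mu'(s)^\rho$, and the identification of the residual ambiguity \eqref{vuptof} with addition of $\nabla|_\rho f$ for $f$ a section of $\Omega^j$. Your final overlap-compatibility check, using $\mu'(s)^\rho \exp(f_j p_j)\mu'(s)^{-\rho} = \exp(\mu'(s)^j f_j p_j)$ from $[\rho,p_j]=jp_j$, is a gluing detail the paper leaves implicit, and it is a welcome addition rather than a deviation.
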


Combining this with Theorem \ref{thm: ct} we arrive at the following.

\begin{theorem}\label{thm: space of opers}
For any open subset $U \subset \Sigma$, the space $\Op_{\lg}(U)$ fibres over $\Conn(U, \Omega)$ and we have the isomorphism
\begin{equation*}
\Op_{\lg}(U)^{\overline{\nabla}} \simeq \Gamma(U,\Omega^2) \times
\prod_{j\in E_{\geq 2}} H^1(U,\Omega^j, \overline{\nabla})
\end{equation*}
for the fibre over any connection $\overline{\nabla} \in \Conn(U, \Omega)$. \qed
\end{theorem}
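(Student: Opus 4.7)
The plan is to construct mutually inverse maps between the two sides. For the forward direction, pick any open cover $\{U_\alpha\}_{\alpha \in A}$ of $U$ by coordinate charts, with coordinates $t_\alpha : U_\alpha \to \CC$; given $[\nabla] \in \Op_{\lg}(U)^{\overline\nabla}$, choose a quasi-canonical representative $\nabla_\alpha$ on each $U_\alpha$ as in Theorem \ref{thm: qcU}, and let $v_{\alpha, j} \in \K(U_\alpha)$ be the coefficient of $p_j$ for $j \in E$. By Proposition \ref{prop: coeff pj}, the coefficient $v_{\alpha, 1}$ is independent of the choice of quasi-canonical form and defines a local section of $\Omega^2$, while for $j \in E_{\geq 2}$ the coefficient $v_{\alpha, j}$ is defined only modulo $\overline\nabla \Gamma(U_\alpha, \Omega^j)$ and thus represents a local class in $H^1(U_\alpha, \Omega^j, \overline\nabla)$. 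The transition formulas \eqref{phipt}--\eqref{vpt}, arising from the gauge transformation $\mu'(s)^\rho$ between quasi-canonical forms in different coordinates, show that the $v_{\alpha, 1}$ transform as components of a global section of $\Omega^2$ on $U$, while the $v_{\alpha, j}$ for $j \in E_{\geq 2}$ form a Čech $0$-cochain whose coboundaries on overlaps are $\overline\nabla$-exact, and hence represent a well-defined class in the Čech--de Rham description of $H^1(U, \Omega^j, \overline\nabla)$.

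For the inverse direction, given $w_1 \in \Gamma(U, \Omega^2)$ and classes $[w_j] \in H^1(U, \Omega^j, \overline\nabla)$ for $j \in E_{\geq 2}$, choose local representatives $w_{\alpha, j}$ on a cover and the local form $\varphi_\alpha$ of $\overline\nabla$ in each coordinate $t_\alpha$, arranged so that on overlaps $U_\alpha \cap U_\beta$ the differences $w_{\alpha, j} - w_{\beta, j}$ (compared after applying the coordinate-change transformation) lie in $\overline\nabla \Gamma(U_\alpha \cap U_\beta, \Omega^j)$. Form the local connection
\begin{equation*}
\nabla_\alpha \coloneqq d + \Bigl( p_{-1} - {h^\vee}^{-1} \varphi_\alpha \rho + w_{\alpha, 1} p_1 + \sum_{j \in E_{\geq 2}} w_{\alpha, j} p_j \Bigr) dt_\alpha \in \op_{\lg}(U_\alpha).
\end{equation*}
On each overlap $U_\alpha \cap U_\beta$, the connections $\nabla_\alpha$ and $\nabla_\beta$ are related by the coordinate-change gauge transformation $\mu_{\alpha\beta}'(s)^\rho$ from \eqref{ccp} composed with a residual gauge transformation in $\exp(\hat\a_{\geq 2}(\K(U_\alpha \cap U_\beta)))$ that absorbs the $\overline\nabla$-exact discrepancy; such a transformation exists precisely by the last statement of Theorem \ref{thm: quasi-canonical form}. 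Hence the $[\nabla_\alpha]$ agree on overlaps and glue to a well-defined element of $\Op_{\lg}(U)^{\overline\nabla}$.

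That the two constructions are mutually inverse then follows from the uniqueness of the extracted data (Proposition \ref{prop: coeff pj}) together with the construction of quasi-canonical forms in Theorem \ref{thm: qcU}. The main obstacle is the Čech-theoretic identification between collections of local classes that glue on overlaps and the global twisted cohomology $H^1(U, \Omega^j, \overline\nabla)$ as defined in this paper, namely the single quotient $\Gamma(U, \Omega \otimes \Omega^j)/\overline\nabla \Gamma(U, \Omega^j)$. The cleanest way to bypass this is to sheafify the whole claim: both $U' \mapsto \Op_{\lg}(U')^{\overline\nabla|_{U'}}$ and $U' \mapsto \Gamma(U', \Omega^2) \times \prod_{j \in E_{\geq 2}} H^1(U', \Omega^j, \overline\nabla|_{U'})$ form presheaves on $\Sigma$; the map constructed above is defined locally and, by Theorem \ref{thm: qcU} together with Proposition \ref{prop: coeff pj}, is a bijection over every coordinate chart. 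It is therefore an isomorphism of the associated sheaves, yielding the claimed bijection on sections over $U$.
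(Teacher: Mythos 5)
Your overall route---extracting the coefficients of a quasi-canonical representative chart by chart, invoking the transition formulas \eqref{phipt}--\eqref{vpt} together with the residual gauge freedom of Theorem \ref{thm: quasi-canonical form}, and inverting by writing down a connection with prescribed coefficients---is exactly the argument the paper intends: it derives the theorem immediately by combining Theorem \ref{thm: ct} with Proposition \ref{prop: coeff pj}. Note also that your inverse direction is actually simpler than you make it: since the paper defines $H^1(U,\Omega^j,\overline{\nabla})$ as the quotient of \emph{global} sections $\Gamma(U,\Omega\otimes\Omega^j)$ by $\overline{\nabla}\,\Gamma(U,\Omega^j)$, each class admits a global representative, and the coordinate components of a global section satisfy \eqref{vpt} exactly on overlaps, so no residual gauge correction is needed to glue the $\nabla_\alpha$ there.

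The genuine gap is in your last paragraph. You correctly isolate the crux of the forward direction: for $j\in E_{\geq 2}$ the extraction only yields a family $\{v_{\alpha,j}\}$ agreeing on overlaps modulo \emph{local} twisted-exact terms (a \v{C}ech--de Rham, i.e.\ hypercohomology, datum), and one must identify this with the single global quotient $\Gamma(U,\Omega\otimes\Omega^j)/\overline{\nabla}\,\Gamma(U,\Omega^j)$. But the proposed sheafification bypass does not do this: an isomorphism of associated sheaves only identifies sections of the \emph{sheafifications}, whereas $U'\mapsto\Gamma(U',\Omega\otimes\Omega^j)/\overline{\nabla}\,\Gamma(U',\Omega^j)$ is precisely the sort of quotient presheaf that need not be separated nor satisfy gluing, so its sections over $U$ may differ from those of its sheafification; the concluding inference ``yielding the claimed bijection on sections over $U$'' is therefore a non sequitur, and as written your argument proves at most an isomorphism with the sheafified right-hand side. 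To close the gap one must show that the residual gauge parameters on each chart can be chosen so that the $v_{\alpha,j}$ literally glue, i.e.\ one must split the \v{C}ech $1$-cocycle of discrepancies $f_{\alpha\beta,j}$, which amounts to a vanishing statement for the first \v{C}ech cohomology of the relevant sheaf (meromorphic sections of $\Omega^j$, or $\overline{\nabla}$-horizontal sections); neither your proposal nor, admittedly, the paper---which states Proposition \ref{prop: coeff pj} and the theorem without further proof---supplies such an argument.
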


\begin{remark}\label{rem: sqo} 
Recall Corollary \ref{cor: v1}. One can also define the sheaf $\Op_{\lg/\CC\delta}$ of $(\lg/\CC\delta)$-opers over $\Sigma$, and one has the analogue of the above theorem, with \be \Op_{\lg/\CC\delta}(U)^{\overline{\nabla}} \simeq 
\prod_{j\in E} H^1(U,\Omega^j, \overline{\nabla})\nn\ee 
for every $\overline{\nabla} \in \Conn(U, \Omega)$. 
\end{remark}

In the present paper, our main interest lies in $\lg$-opers over $\CP$. What we need is the analogous statement to Theorem \ref{thm: space of opers} for the space $\Op_{\lg}^\reg(\CP)_X^\varphi$ of global meromorphic $\lg$-opers which are holomorphic on the complement $X = \CC \setminus \{ z_i\}_{i=1}^N$ of the set of marked points, as defined in \S\ref{sec: regular points}.

For every $j \in E$, let us denote by $\Gamma_X(\CP,\Omega^{j+1})$ the space of global meromorphic sections of $\Omega^{j+1}$ that are holomorphic on $X$. Let $H^1_X(\CP,\Omega^j, \nabla|_\rho)$ be the corresponding twisted cohomologies. Also let $\Conn_X(\CP, \Omega)$ denote the space of global meromorphic connections on $\Omega$ which are holomorphic on $X$.
 
\begin{theorem}\label{thm: space of opers on CP1}
$\Op^\reg_{\lg}(\CP)_X$ fibres over $\Conn_X(\CP, \Omega)$ and we have the isomorphism
\begin{equation*}
\Op^\reg_{\lg}(\CP)_X^{\overline{\nabla}} \simeq \Gamma_X(\CP,\Omega^2) \times \prod_{j\in E_{\geq 2}} H^1_X(\CP,\Omega^j, \overline{\nabla})
\end{equation*}
for the fibre over any $\overline{\nabla} \in \Conn_X(\CP, \Omega)$. \qed
\end{theorem}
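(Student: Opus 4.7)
The plan is to specialise Theorem \ref{thm: space of opers} to $U = \CP$ and then argue that the isomorphism restricts correctly to the $X$-regular subspaces on both sides. First I would verify that the map $[\nabla] \mapsto \nabla|_\rho$ restricts to a well-defined map $\Op^\reg_{\lg}(\CP)_X \to \Conn_X(\CP,\Omega)$: the $\rho$-coefficient is gauge invariant by Lemma \ref{lem: Lambda indep}, and for each $x \in X$ the recursive construction in the proof of Theorem \ref{thm: quasi-canonical form}, applied with $\M$ replaced by $\M^\reg_x$ to a representative of $[\nabla]$ regular at $x$, produces a quasi-canonical form regular at $x$. Hence the common coefficient of $\rho$ is holomorphic at each point of $X$.

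The core of the argument is the claim that an oper $[\nabla] \in \Op_\lg(\CP)^\varphi$ with $\overline\nabla \in \Conn_X(\CP,\Omega)$ lies in $\Op^\reg_\lg(\CP)_X^{\overline\nabla}$ if and only if it admits a quasi-canonical representative
\begin{equation*}
\nabla = d + \Bigg(p_{-1} - \frac{\varphi}{h^\vee}\rho + \sum_{j\in E} v_j p_j\Bigg) dz
\end{equation*}
with $v_j \in \Gamma_X(\CP,\Omega^{j+1})$ for every $j \in E$. The ``if'' direction is immediate. For the converse I would begin with an arbitrary quasi-canonical representative $\nabla$ and eliminate the finite set of poles of the $v_j$'s inside $X$ one by one. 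Given $x \in X$ at which some $v_j$ has a pole, the first paragraph furnishes a quasi-canonical representative $\nabla^{(x)}$ regular at $x$; by the uniqueness clause of Theorem \ref{thm: quasi-canonical form}, $\nabla - \nabla^{(x)} = d^\varphi f^{(x)}$ with $f^{(x)} = \sum_{j \in E_{\geq 2}} f_j^{(x)} p_j \in \hat\a_{\geq 2}(\M)$, whence the principal part of $v_j$ at $x$ equals that of $\overline\nabla f_j^{(x)}$. I would then extend the principal part of $f_j^{(x)}$ at $x$ to a rational function $g_j$ on $\CP$ with poles confined to $\{x\}$; applying the residual gauge transformation $\exp(\sum_j g_j p_j)$ to $\nabla$ cancels the pole of $v_j$ at $x$ without creating any new pole in $X$. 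The coefficient $v_1$ is gauge invariant by Proposition \ref{prop: can form u1} and hence already holomorphic at $x$ via $\nabla^{(x)}$. Iterating over the finitely many offending points proves the claim.

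Given the claim, the fibre isomorphism is obtained by the direct analogue of the argument proving Theorem \ref{thm: space of opers}: for fixed $\overline\nabla \in \Conn_X(\CP,\Omega)$, an element of $\Op^\reg_\lg(\CP)_X^{\overline\nabla}$ is parameterised by a gauge-invariant $v_1 \in \Gamma_X(\CP, \Omega^2)$ together with, for each $j \in E_{\geq 2}$, a class $[v_j]$ in $\Gamma_X(\CP, \Omega^{j+1}) / \overline\nabla\, \Gamma_X(\CP, \Omega^j) = H^1_X(\CP, \Omega^j, \overline\nabla)$, and bijectivity of the resulting assignment is verified exactly as in the local case.

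The hard part will be the globalisation step in the second paragraph: lifting local germinal gauge parameters to meromorphic functions on $\CP$ that are holomorphic on $X$ away from a single point. This is elementary thanks to the extendibility of principal parts to rational functions, but requires care to ensure that successive pole removals at distinct points of $X$ do not reintroduce singularities already cancelled; this follows from the fact that each $g_j$ constructed above is holomorphic outside a single point of $X$, so the corrections at different points do not interfere.
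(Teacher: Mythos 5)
Your overall route is the natural one, and it is essentially what the paper leaves implicit (the theorem is stated without a separate proof, as the $X$-regular specialisation of Theorem \ref{thm: space of opers}): show that a class in $\Op^\reg_{\lg}(\CP)_X$ admits a quasi-canonical representative whose coefficients are holomorphic on all of $X$, and then parameterise the fibre by $v_1$ and the classes $[v_j]$, $j\in E_{\geq 2}$. However, there is a concrete gap in your pole-removal step: you assert that the poles of the $v_j$'s inside $X$ form a \emph{finite} set and then iterate over them one by one. Since $E$ is infinite and nothing ties the pole locations of $v_j$ together as $j$ grows (recall that $\hat\a_{\geq 1}(\M)$ is strictly larger than $\a_{\geq 1}\ox\M$; a gauge transformation $\exp\big(\sum_{j\in E_{\geq 2}}f_j p_j\big)$ with the $f_j$ acquiring poles at infinitely many distinct points of $X$ is perfectly admissible), an arbitrary quasi-canonical representative can have offending points at infinitely many places of $X$, and your iteration then does not terminate.

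The fix is short but should be said: for each \emph{fixed} $j$ the function $v_j$ has only finitely many poles in $X$, and your correction $g^{(x)}_j$ is nonzero only when $v_j$ actually has a pole at $x$. Indeed, if $v_j$ is regular at $x$ then $d^\varphi f^{(x)}_j=\pm\big(v_j-v^{(x)}_j\big)$ is regular at $x$, and since $\varphi$ is regular on $X$ a pole of $f^{(x)}_j$ of order $k\geq 1$ at $x$ would force a pole of order $k+1$ in $d^\varphi f^{(x)}_j$ (no cancellation between $f'_j$ and $\tfrac{j\varphi}{h^\vee}f_j$), so $f^{(x)}_j$, hence $g^{(x)}_j$, is regular at $x$. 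Consequently $g_j\coloneqq\sum_{x}g^{(x)}_j$ is a finite sum for each $j$, the single element $\exp\big(\sum_{j\in E_{\geq 2}}g_j p_j\big)\in\exp(\hat\a_{\geq 2}(\M))$ is well defined, and one gauge transformation removes all poles in $X$ at once; your observation that corrections at distinct points do not interfere is exactly what makes this work. Note also that this same ``$d^\varphi f$ regular at $x\in X$ implies $f$ regular at $x$'' lemma is what you need (and do not state) to know that two $X$-regular quasi-canonical representatives of the same oper differ by a gauge parameter with $f_j\in\Gamma_X(\CP,\Omega^j)$, which is required for the assignment $[\nabla]\mapsto\big(v_1,([v_j])_{j\in E_{\geq 2}}\big)$ into $\Gamma_X(\CP,\Omega^2)\times\prod_{j\in E_{\geq 2}}H^1_X(\CP,\Omega^j,\overline{\nabla})$ to be well defined and injective; with that added, your last paragraph does go through.
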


\section{Twisted homology and the integral pairing}\label{sec: twisted homology}
Recall the integrals from Corollary \ref{cor: opint}.
With Theorem \ref{thm: space of opers on CP1} in hand we can give a coordinate-independent description of functions on $\Op_{\lg}^\reg(\CP)_X^{\overline{\nabla}}$.

\subsection{Twisted homology}
Suppose $U\subset X$ is an open subset with a holomorphic coordinate $t:U\to \CC$. In the trivialization defined by the coordinate $t$, a horizontal section of the local system $(\Omega^j, \nabla|_\rho)$, for any $j \in \ZZ$, is given by a holomorphic function $f$ on $U$ such that $df - j {h^\vee}^{-1} \varphi f dt = 0$. Recall the multivalued holomorphic function $\P$ on the complement $X = \CC \setminus \{ z_i \}_{i=1}^N$ from \eqref{def: P}: concretely, $f$ is a constant multiple of a univalued branch of $\P^{j/h^\vee}$ over $U$. 

Recall that a \emph{singular $p$-simplex in $X$} is a continuous map $\sigma: \Delta_p\to X$ from the standard $p$-simplex $\Delta_p$ to $X$. (We shall need only $p\in \{0,1,2\}$.)
Define a \emph{twisted $p$-simplex in $X$ of degree $j$} to be a pair $\sigma \otimes f$ consisting of a singular $p$-simplex $\sigma$ in $X$ together with a horizontal section $f$ of $(\Omega^{j}, \nabla|_\rho)$ over an open neighbourhood of $\sigma(\Delta_p)$ in $X$.
A \emph{twisted $p$-chain of degree $j$} is then a finite formal sum  
\be \gamma = \sum_k \sigma_k \otimes g_k\nn\ee
of twisted $p$-simplices in $X$ of degree $j$.
Let $C_p(X,\Omega^j, \nabla|_\rho)$ be the (infinite-dimensional) complex vector space of twisted $p$-chains in $X$ of degree $j$, where scalar multiplication of chains is by scalar multiplication of the horizontal sections.
Recall that the usual boundary operator sends a singular $p$-simplex $\sigma$ to $\sum_{k=0}^p (-1)^k s_k$ where $s_k$ is the restriction of $\sigma$ to the $k^{\rm th}$ face of $\Delta_p$ (which is canonically identified with $\Delta_{p-1}$). In our twisted setting, the boundary operator $\partial$ is the linear map 
\begin{equation} \label{twisted boundary}
\partial : C_{p}(X, \Omega^j, \nabla|_\rho) \longrightarrow C_{p-1}(X,\Omega^j, \nabla|_\rho),
\end{equation}
defined by $\sigma \otimes f \mapsto \sum_{k=0}^p (-1)^k s_k \otimes f_k$
where $f_k$ is the restriction of $f$ to an open neighbourhood of $s_k(\Delta_{p-1})$ in $X$. 
The property $\partial^2 = 0$ follows from the same property in the usual setting. 
The kernel of the map \eqref{twisted boundary}, which we denote by $Z_{p}(X, \Omega^j, \nabla|_\rho)$, is the space of \emph{closed} twisted $p$-chains of degree $j$. The \emph{twisted homology of $X$} is then the quotient of vector spaces
\begin{equation*}
H_1(X, \Omega^j, \nabla|_\rho) \coloneqq Z_1(X, \Omega^j, \nabla|_\rho) / \partial C_2(X, \Omega^j, \nabla|_\rho).
\end{equation*}
and its elements are \emph{twisted cycles of degree $j$}.

\subsection{Twisted de Rham theorem}

Let $\omega\in \Gamma_X(\CP,\Omega\ox\Omega^j)$ and $(\sigma,f)$ be a twisted $1$-simplex in $X$ of degree $-j$, for any $j \in E$. On an open neighbourhood $U$ of $\sigma(\Delta_1)$ we have the holomorphic $1$-form $f\omega\in \Gamma_X(U,\Omega)$. Define the integral of $\omega$ over $\sigma \otimes f$ to be the usual integral of $f\omega$ over the singular $1$-simplex $\sigma$:
\begin{equation*}
\int_{\sigma \otimes f} \omega \coloneqq \int_\sigma f \omega.
\end{equation*}
Extending by linearity we have the integral $\int_\gamma \omega$ over any $\gamma\in C_1(X,\Omega^{-j}, \nabla|_\rho)$.

In the same way, one defines the integral of a $0$-form $\omega\in \Gamma_X(\CP,\Omega^j)$ over a twisted $0$-chain $\gamma\in C_0(X,\Omega^{-j}, \nabla|_\rho)$; and also in principle of a $2$-form over a twisted $2$-chain, although of course since we are on a curve the only meromorphic $2$-form we have to integrate is the zero $2$-form. 

\begin{proposition}[Twisted Stokes's theorem] 
Let $p \in \{0, 1\}$. For any $p$-form $\omega \in \Gamma_X(\CP,\Omega^{\wx p} \otimes \Omega^j)$ and any twisted $(p+1)$-chain $\gamma\in C_{p+1}(X,\Omega^{-j}, \nabla|_\rho)$,
\begin{equation*}
\int_\gamma \nabla|_\rho \omega = \int_{\partial\gamma} \omega.
\end{equation*}
\end{proposition}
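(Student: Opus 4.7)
My plan is to reduce the twisted Stokes identity to the ordinary Stokes theorem for singular chains in $X$, by exploiting horizontality of the local coefficients carried by a twisted simplex. By $\CC$-linearity in both arguments, it suffices to treat $\gamma = \sigma \otimes f$, where $\sigma$ is a singular $(p+1)$-simplex in $X$ and $f$ is a horizontal section of $(\Omega^{-j},\nabla|_\rho)$ on an open neighbourhood $U \subset X$ of $\sigma(\Delta_{p+1})$.

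The key step is a twisted Leibniz identity: using the canonical pairing $\Omega^{-j}\otimes\Omega^k \simeq \Omega^{k-j}$, I claim that on $U$
\begin{equation*}
f \cdot \nabla|_\rho\, \omega \;=\; d(f\omega),
\end{equation*}
where $d$ is the ordinary exterior derivative and $f\omega$ is the induced section of $\Omega^{\wx p}\otimes\Omega^{k-j}$. This is a one-line check in a local coordinate $t$: on sections of $\Omega^j$ the operator $\nabla|_\rho$ acts as $d - j(h^\vee)^{-1}\varphi\,dt \wx (\cdot)$, and on sections of $\Omega^{-j}$ as $d + j(h^\vee)^{-1}\varphi\,dt \wx (\cdot)$, so the horizontality condition $\nabla|_\rho f = 0$ precisely cancels the connection term when $f$ is moved past $\nabla|_\rho$. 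Granted this, the case $p=0$ is completed by applying the classical Stokes theorem to the ordinary $1$-form $f\omega$ along $\sigma$, and then unwinding the defining formula $\partial(\sigma \otimes f) = \sum_{k=0}^{1}(-1)^k s_k \otimes f|_{s_k}$ of the twisted boundary:
\begin{equation*}
\int_\gamma \nabla|_\rho\,\omega \;=\; \int_\sigma f\cdot\nabla|_\rho\,\omega \;=\; \int_\sigma d(f\omega) \;=\; \int_{\partial\sigma} f\omega \;=\; \int_{\partial\gamma}\omega.
\end{equation*}
The case $p=1$ is essentially vacuous: since $\Sigma$ is a Riemann surface, $\Omega^{\wx 2} = 0$, so $\nabla|_\rho\,\omega = 0$ identically; and on the other side, $f\omega$ is a holomorphic $1$-form on $U \subset X$ (via $\Omega^{-j}\otimes\Omega^{j+1}\simeq\Omega$), hence automatically $d$-closed as a real $2$-form on a Riemann surface, so ordinary Stokes forces $\int_{\partial\sigma} f\omega = 0$ as well.

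I do not anticipate any serious obstacle. The twisted Leibniz identity is precisely what is needed to trade $\nabla|_\rho$ for the untwisted $d$, after which classical Stokes applies verbatim. The only minor points to verify are that the single-valued branch of $f$ used to evaluate $\int_\sigma f\omega$ is compatible with the branches $f|_{s_k}$ assigned to the boundary faces in $\partial(\sigma \otimes f)$, which is forced by the restriction convention built into the definition of the twisted boundary, and that $\sigma(\Delta_{p+1})$ can be taken to lie inside the open neighbourhood $U$ on which the chosen branch of $f$ is defined, which is automatic from the definition of a twisted simplex.
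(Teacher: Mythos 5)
Your argument is correct and follows essentially the same route as the paper's proof: reduce by linearity to a single twisted simplex $\sigma \otimes f$, use horizontality of $f$ to obtain the Leibniz identity $f\,\nabla|_\rho\omega = d(f\omega)$, and then apply the classical Stokes theorem together with the definition of the twisted boundary. Your added remarks (the coordinate check of the Leibniz identity and the observation that the $p=1$ case is vacuous on a curve) are consistent with, and slightly more explicit than, the paper's uniform treatment of both cases.
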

\begin{proof}
By linearity it suffices to consider $\gamma = \sigma \otimes f$ with $\sigma$ a singular $(p+1)$-simplex on $X$ and $f$ a horizontal section of $\Omega^{-j}$ over an open neighbourhood of $\sigma(\Delta_{p+1})$. We then have
\begin{equation*}
\int_\gamma \nabla|_\rho \omega = \int_\sigma f \nabla|_\rho \omega = \int_\sigma d (f \omega) = \int_{\partial \sigma} f\omega 
= \int_{\partial (\sigma \otimes f) } \omega = \int_{\partial \gamma} \omega,
\end{equation*}
where in the second equality we used $d (f \omega) = (\nabla|_\rho f)\omega + f \nabla|_\rho \omega = f \nabla|_\rho \omega$ (since $f$ is horizontal) and in the third equality we used the usual Stokes's theorem.
\end{proof}
\begin{proposition}[Complex de Rham theorem]
There is a bilinear pairing between twisted homologies and cohomologies, given by integrating twisted forms over twisted chains:
\begin{equation*}
H_i(X, \Omega^{-j}, \nabla|_\rho) \times H^i_X(\CP,\Omega^j, \nabla|_\rho) \longrightarrow \CC, \qquad
(\gamma, \omega) \longmapsto \int_\gamma \omega.
\end{equation*}
for $i\in\{0,1\}$ and any $j \in E$.  
\end{proposition}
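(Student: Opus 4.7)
The plan is to show the map $(\gamma,\omega)\mapsto \int_\gamma\omega$ descends to a well-defined pairing on equivalence classes, since once descent is established $\CC$-bilinearity is immediate from the fact that integration of twisted forms over twisted chains is already defined to be linear in both slots. The entire argument reduces to two applications of the twisted Stokes's theorem proved just above, together with the observation that on the complex curve $\CP$ the twisted de Rham complex terminates in degree $1$ (since $\Omega\wedge\Omega=0$).

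First I would establish independence in the homology variable. Suppose $\gamma,\gamma'\in Z_i(X,\Omega^{-j},\nabla|_\rho)$ with $\gamma'-\gamma=\partial\eta$ for some $\eta\in C_{i+1}(X,\Omega^{-j},\nabla|_\rho)$. By the twisted Stokes's theorem,
\begin{equation*}
\int_{\gamma'}\omega-\int_{\gamma}\omega=\int_{\partial\eta}\omega=\int_\eta\nabla|_\rho\omega.
\end{equation*}
For $i=1$ the integrand $\nabla|_\rho\omega$ lies in $\Gamma_X(\CP,\Omega\wedge\Omega\otimes\Omega^j)$, which is zero on the one-dimensional complex manifold $\CP$; for $i=0$, a class in $H^0_X(\CP,\Omega^j,\nabla|_\rho)=\ker\nabla|_\rho$ is horizontal by definition, so $\nabla|_\rho\omega=0$. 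Either way the right-hand side vanishes.

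Next I would handle independence in the cohomology variable. For $i=0$ there is nothing to check, since $H^0_X$ is a kernel and has no coboundaries. For $i=1$, suppose $\omega'-\omega=\nabla|_\rho f$ with $f\in\Gamma_X(\CP,\Omega^j)$. Then, again by twisted Stokes,
\begin{equation*}
\int_\gamma\omega'-\int_\gamma\omega=\int_\gamma\nabla|_\rho f=\int_{\partial\gamma}f=0,
\end{equation*}
because $\gamma$ is a cycle, i.e.\ $\partial\gamma=0$. Convergence of all the integrals is not an issue: the singular simplices have image in $X$, which avoids the marked points $\{z_i\}_{i=1}^N$, and $\omega$, $f$ are holomorphic on $X$, while horizontal sections (locally branches of $\P^{-j/h^\vee}$) are holomorphic on the open neighbourhoods where they have been chosen; thus the integrals are of bounded holomorphic forms over compact images of simplices.

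There is essentially no substantial obstacle: the only point requiring a moment's thought is the vanishing of $\nabla|_\rho\omega$ for $\omega\in\Gamma_X(\CP,\Omega\otimes\Omega^j)$ in the $i=1$ homology-invariance step, which is automatic from dimensional reasons on a curve and is compatible with the way the twisted de Rham complex was set up in \S\ref{sec: twisted coh curve} with only two nontrivial terms. Together these give the claimed bilinear pairing.
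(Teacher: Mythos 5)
Your proof is correct and follows essentially the same route as the paper: the paper's proof simply invokes the twisted Stokes's theorem to establish that the integral pairing descends to (co)homology classes, which is exactly what you carry out, spelling out the two descent checks (including the vanishing of the degree-two term on a curve, which the paper had already noted when defining integration of twisted $2$-chains).
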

\begin{proof} The fact that the integral pairing between forms and chains descends to a well-defined pairing between homologies and cohomologies follows from the twisted version of Stokes's theorem above.
\end{proof}

\section{Discussion} \label{sec: discussion}

\subsection{Smooth opers, Drinfel'd-Sokolov and (m)KdV} \label{sec: smooth DS}

The present paper concerned the role of affine opers in describing the spectrum of a (conjectured) family of higher Hamiltonians for affine Gaudin models. Affine Miura opers also play a conceptually quite distinct role in mathematical physics: namely they serve as the phase space of classical generalized mKdV theories.
In the latter context, the procedure of Theorem \ref{thm: quasi-canonical form} for putting an affine oper into quasi-canonical form essentially appears in the paper of Drinfel'd and Sokolov \cite[\S6]{DS}. Specifically, if one replaces meromorphic functions with smooth functions on the circle, and -- crucially -- if one sets the twist function $\varphi$ to zero, then our procedure in \S\ref{sec: quasi-can form} coincides with the procedure of \cite{DS} to construct the densities of Hamiltonians of the classical generalised $\lg$-(m)KdV hierarchy. In what follows we elaborate on this last statement, and contrast the two settings.

In our earlier definition of $\lg$-opers from \S\ref{sec: opers} and of Miura $\lg$-opers from \S\ref{sec: class of Miura opers}, one can replace the algebra $\M$ of meromorphic functions on $\CP$ by the algebra $C^\8(S^1, \CC)$ of smooth functions on the circle. On doing so, we obtain the spaces of smooth $\lg$-opers $\Op_{\lg}(S^1)$ and Miura $\lg$-opers $\MOp_{\lg}(S^1)$ on $S^1$. Furthermore, one may also consider the spaces of smooth $\lg/\CC \delta$-opers $\Op_{\lg/\CC \delta}(S^1)$ and Miura $\lg/\CC \delta$-opers $\MOp_{\lg/\CC \delta}(S^1)$, cf. Corollary \ref{cor: v1} and \S\ref{sec: quad Ham}.
The phase space of $\lg$-mKdV can then be identified with the set $\MOp_{\lg/\CC \delta}(S^1)^0$ of Miura $\lg/\CC \delta$-opers with zero twist function $\varphi = 0$. Indeed, let $\sigma: S^1 \to (0,2\pi)$ denote the natural coordinate on the circle  $S^1 = \RR/ 2\pi\ZZ$. Then a connection $\nabla \in \MOp_{\lg/\CC \delta}(S^1)^0$ takes the form
\begin{equation} \label{mkdv}
\nabla = d + p_{-1}d\sigma + \sum_{i=1}^\ell u_i(\sigma) \alpha_i d\sigma
\end{equation}
where $u_i(\sigma) \in C^\infty(S^1, \CC)$ are smooth functions on the circle, the \emph{classical $\lg$-mKdV fields}. Recalling that $\lh'$ is the span of the simple roots $\{ \alpha_i \}_{i=0}^\ell$, here we implicitly identify the quotient $\lh'/\CC \delta$ with the span of the subset $\{ \alpha_i \}_{i=1}^\ell$.

To go from mKdV to KdV we first need some definitions. Let $\overline{\ln}_+$ be the finite-dimensional nilpotent Lie subalgebra of $\ln_+$ generated by $\check e_i$, $i=1, \ldots, \ell$. We may form the infinite-dimensional nilpotent subalgebra $\overline{\ln}_+(C^\infty(S^1, \CC))$ of the Lie algebra $\lhn_+(C^\infty(S^1, \CC))$ defined as a completion of $\ln_+ \otimes C^\infty(S^1, \CC)$ as in \S\ref{sec: inverse limits}. The Baker-Campbell-Hausdorff formula \eqref{BCH formula} then endows the vector space $\overline{\ln}_+(C^\infty(S^1, \CC))$ with the structure of a group which we denote by $\overline{\lN}_+(C^\infty(S^1, \CC))$. This is a subgroup of $\lhN_+(C^\infty(S^1, \CC))$ defined just as in \S\ref{sec: group lhN} but with $\M$ replaced by $C^\infty(S^1, \CC)$.

The canonical map $\MOp_{\lg/\CC\delta}(S^1)^0 \rightarrow \Op_{\lg/\CC\delta}(S^1)^0, \nabla \mapsto [\nabla]$ factors through
\begin{equation} \label{mKdV to KdV}
\MOp_{\lg/\CC\delta}(S^1)^0 \longhookrightarrow \op_{\lg/\CC\delta}(S^1)^0 \longtwoheadrightarrow \mathscr M \coloneqq \op_{\lg/\CC\delta}(S^1)^0 \big/ \overline{\lN}_+(C^\infty(S^1, \CC)).
\end{equation}
The phase space of $\lg$-KdV is by definition the quotient space $\mathscr M$. Consider now the connection $\nabla \in \MOp_{\lg/\CC\delta}(S^1)^0$ given in \eqref{mkdv} which we write as
\begin{equation*}
\nabla = d + \bar p_{-1}d\sigma + \check f_0 d\sigma + \sum_{i=1}^\ell u_i(\sigma) \alpha_i d\sigma \in \op_{\lg}(S^1)^0
\end{equation*}
with $\bar p_{-1} = \sum_{i=1}^\ell \check f_i$.
Since $[\check e_i, \check f_0] = 0$ for $i = 1, \ldots, \ell$ it follows that $\check f_0$ is invariant under the adjoint action of $\overline{\lN}_+(C^\infty(S^1, \CC))$ on $\lhg(C^\infty(S^1, \CC))$. Therefore, exactly as in the finite-dimensional setting recalled in \S\ref{fto}, the image of $\nabla$ under the map \eqref{mKdV to KdV} has a unique representative of the form 
\begin{equation} \label{KdV connection}
d + p_{-1} d\sigma + \sum_{i\in \bar E} v_i(\sigma) \bar p_i d\sigma.
\end{equation}
Here $\bar p_1$ denotes the unique element in $\overline{\ln}_+$ such that $\{ \bar p_{-1}, 2 \rho - 2h^\vee \Lambda_0, \bar p_1 \} \subset \lg/\CC \delta$ forms an $\sl_2$-triple, and the $\bar p_i$, $i \in \bar E$ span the kernel of $\ad \bar p_1 : \overline{\ln}_+ \to \overline{\ln}_+$.
The smooth functions $v_i(\sigma)\in C^\8(S^1,\CC)$ are the \emph{classical $\lg$-KdV fields}. 

Now Theorem \ref{thm: quasi-canonical form}, and in particular also Corollary \ref{cor: v1}, generalises to the smooth setting. Therefore, starting from the smooth Miura $\lg/\CC\delta$-oper $\nabla$ in \eqref{mkdv} we obtain a quasi-canonical form
\begin{equation} \label{mKdV can form}
d + p_{-1} d\sigma + \sum_{i\in E} h_i(\sigma) p_i d\sigma
\end{equation}
of the underlying smooth $\lg/\CC \delta$-oper $[\nabla] \in \Op_{\lg/\CC \delta}(S^1)^0$.
Let $g \in \lhN_+(C^\infty(S^1, \CC))$ denote the gauge transformation parameter sending the Miura $\lg/\CC\delta$-oper \eqref{mkdv} to the quasi-canonical form \eqref{mKdV can form}. For any $i \in E$, the \emph{$i^{\rm th}$ mKdV flow} is given by
\begin{equation} \label{mKdV flow}
\frac{\partial}{\partial t_i} \nabla_{\partial_\sigma} = \big[ (g^{-1} p_{-i} g)_+, \nabla_{\partial_\sigma} \big] 
\end{equation}
where $\lg \mapsto \ln_+$, $X \mapsto X_+$ is the canonical projection onto $\ln_+$ relative to the Cartan decomposition of $\lg$. Note that since $p_{-i}$ commutes with $\a_{\geq 2}$ in the quotient $\lg/\CC\delta$, the expression $g^{-1} p_{-i} g$ does not depend on the ambiguity in the quasi-canonical form described in Theorem \ref{thm: quasi-canonical form}. Now the right hand side of \eqref{mKdV flow} takes values in $\lh'/\CC\delta$ by \cite[Lemma 6.7]{DS} so that equation \eqref{mKdV flow} indeed defines a flow, for each $i \in E$, on the phase space $\MOp_{\lg/\CC\delta}(S^1)^0$ of $\lg$-mKdV. Furthermore, these flows are mutually commuting \cite[Proposition 6.5]{DS} and define the \emph{$\lg$-mKdV hierarchy}. Similarly, one can also define commuting flows on the phase space $\mathscr M$ of $\lg$-KdV \cite[\S 6.2]{DS}, giving rise to the \emph{$\lg$-KdV hierarchy}.

The smooth functions $h_i(\sigma)\in C^\8(S^1,\CC)$ appearing in the quasi-canonical form \eqref{mKdV can form} are the \emph{densities of the $\lg$-(m)KdV Hamiltonians}.
Indeed, as in Theorem \ref{thm: quasi-canonical form}, the $h_i$ are defined up to the addition of exact derivatives (now not twisted, since $\varphi = 0$). To get gauge-invariant functions we should integrate them over a cycle, and on the circle that leaves only one possibility. Thus, we obtain the following functions on the phase space $\MOp_{\lg/\CC\delta}(S^1)^0$ of $\lg$-mKdV, or alternatively on the phase space $\mathscr M$ of $\lg$-KdV:
\begin{equation*}
\mathscr H_i \coloneqq \int_0^{2 \pi} h_i(\sigma) d\sigma, \qquad i\in E.
\end{equation*}
These are the \emph{$\lg$-(m)KdV Hamiltonians}. They are conserved quantities under the flows of both the $\lg$-mKdV hierarchy \eqref{mKdV flow} and the $\lg$-KdV hierarchy \cite[Proposition 6.6]{DS}. Moreover, they generate these flows with respect to certain Poisson brackets on $\MOp_{\lg/\CC\delta}(S^1)^0$ and $\mathscr M$, respectively \cite[Propositions 6.11 and 6.10]{DS}.

It is interesting to note that the equation \eqref{mKdV flow} also defines a flow on $\MOp_{\lg/\CC\delta}(S^1)^\varphi$ for any choice of smooth function $\varphi \in C^\infty(S^1, \CC)$ since $\rho \not\in \lgp$. In fact, one could define $\lg$-mKdV flows in the analytic setting of the present paper with non-zero twist function $\varphi$. In the case  $\varphi = 0$, the $\lg$-(m)KdV flows have previously been discussed in the analytic setting in \cite{MR3239138} when $\lg = \widehat{\mathfrak{sl}}_N$ and in \cite{MR3297115} for $\lg$ of type $\null^2 A_2$.

\subsection{$\lg$-(m)KdV on polygonal domains}

As recalled in \S\ref{sec: smooth DS}, the affine space of Miura $\lg$-opers $\MOp_{\lg}(\CP)^\varphi$ considered in the present article does not quite correspond to the phase space of $\lg$-mKdV. Indeed, for us the twist function $\varphi$ plays a central role in characterising the affine Gaudin model, just as in the classical case \cite{V17}.

In the present section we show that the simple class of rational Miura $\lg$-opers in $\MOp_{\lg}(\CP)^\varphi$ introduced in \S\ref{sec: class of Miura opers} can, nevertheless, alternatively be described in terms of Miura $\lg$-opers of the mKdV-type as in \eqref{mkdv}, \emph{i.e.} with zero twist function. The price to pay, however, is that one then needs to work with multivalued Miura $\lg$-opers defined over a polygonal region of the complex plane (whose shape now encodes the same information as the twist function did). For this reason we prefer to keep the twist function explicit and work with the space $\MOp_{\lg}(\CP)^\varphi$.

Let us remark, in passing, that the situation described below is very reminiscent of the relation between the modified sinh-Gordon equation and the sinh-Gordon equation in the context of the massive ODE/IM correspondence \cite{Lukyanov:2010rn}.

Recall the class of Miura $\lg$-opers of the form \eqref{u Miura op def} introduced in \S\ref{sec: class of Miura opers}. We consider the analogous class of Miura $\lg/\CC\delta$-oper of the form
\begin{subequations} \label{Miura rho explicit}
\begin{equation} \label{Miura rho explicit a}
\nabla = d + p_{-1} dz + \widetilde u(z) dz - \frac{\varphi(z)}{h^\vee} \rho\, dz,
\end{equation}
where $\widetilde u \in (\lh'/\CC\delta)(\M)$ is the rational function valued in $\lh'/\CC\delta$ defined as
\begin{equation} \label{Miura rho explicit b}
\widetilde u(z) \coloneqq - \sum_{i=1}^N \frac{\dot{\lambda}_i}{z - z_i} dz + \sum_{i = 1}^\ell \Bigg( \sum_{j=1}^{m_i} \frac{1}{z - w^i_j} - \sum_{j=1}^{m_0} \frac{1}{z - w^0_j} \Bigg) \alpha_i dz.
\end{equation}
\end{subequations}
Here, by comparison with the expression \eqref{u Miura op def}, we have split the sum over the simple poles at the Bethe roots into separate sums over the collection of Bethe roots $w^i_j$, $j = 1, \ldots, m_i$ of the same colour $i \in I = \{ 0, \ldots, \ell \}$. We also implicitly identify the subspace $\lh'/\CC\delta$ of $ \lgp/\CC\delta$ with the span of the simple roots $\{ \alpha_i \}_{i=1}^\ell$ and the subspaces $\lg_n/\CC\delta$ for $n \neq 0$ with $\lg_n$, as we did in \S\ref{sec: smooth DS}.

Recall from \S\ref{sec: twisted homology coord} that $\varphi(z) = \partial_z \log \P(z)$. Fix a collection of cuts $C \subset \CP$ between the branch points $z_i$ of the multivalued function $\P^{1/h^\vee}$ on the Riemann sphere $\CP$ and let $\overline{\M}$ denote the field of meromorphic functions on $\CP \setminus C$. From now on we fix a branch of $\P^{1/h^\vee}$ which by abuse of notation we also denote by $\P^{1/h^\vee} \in \overline{\M}$.

By treating \eqref{Miura rho explicit} as a Miura $\lg/ \CC\delta$-oper on $\CP \setminus C$ and working over the larger field $\overline{\M} \supset \M$ one can then remove the $\rho$ term by performing a gauge transformation by $\P(z)^{- \rho/h^\vee} \in \lH(\overline{\M})$. Using the second relation in \eqref{Lie alg a com rel} with $n = -1$ we find
\begin{equation} \label{Miura oper no rho}
\widetilde \nabla \coloneqq \P(z)^{-\rho/h^\vee} \nabla \P(z)^{\rho/h^\vee} = d + p_{-1} \P(z)^{1/h^\vee} dz + \widetilde u(z) dz.
\end{equation}
In order to bring $\widetilde \nabla$ back to the form of a Miura $\lgp/\CC\delta$-oper, consider the new variable $x$ defined as the indefinite integral
\begin{equation} \label{SC mapping}
x \coloneqq \int \P(z)^{1/h^\vee} dz = \int \prod_{i=1}^N (z - z_i)^{k_i / h^\vee} dz,
\end{equation}
where in the second equality we used the explicit form \eqref{def: P} of the function $\P$.

Suppose, for simplicity, that all the $z_i$, $i = 1, \ldots, N$ are real. It is then convenient to relabel them so that they are ordered as $z_1 < z_2 < \ldots < z_N$. For generic $k_i \in \CC$, $i = 1, \ldots, N$, a possible choice of cuts $C \subset \CP$ is to take the following union of open intervals along the real axis in the $z$-plane
\begin{equation*}
C = (- \infty, z_1) \cup \bigcup_{i=2}^{N-1} (z_i, z_{i+1}) \cup (z_N, \infty).
\end{equation*}
Let $x_i \in \CP$, for each $i = 1, \ldots, N$, be the image of $z_i$ under the transformation \eqref{SC mapping}, and let $x_{N+1} \in \CP$ denote the image of $z = \infty$. Suppose the $x_i$, $i=1,\ldots,N+1$ are all distinct and the ordered set $(x_i)_{i=1}^N$ describes the adjacent vertices of a simple polygonal domain $P$ in the $x$-plane, where one of the $x_i$'s could be infinite. In this case, the transformation $z \mapsto x$ given in \eqref{SC mapping} defines a Schwartz-Christoffel mapping. It is a biholomorphic map $\mathbb H \to P$, \emph{i.e.} a bijective holomorphic map whose inverse is also holomorphic, from the upper-half $\mathbb H \coloneqq \{ z \in \CC \,|\, \Im z \geq 0 \}$ of the $z$-plane to the polygon $P$ in the $x$-plane. Each interval $(z_i, z_{i+1})$ for $i =1, \ldots, N-1$ is sent to the straight edge from $x_i$ to $x_{i+1}$, while the semi-infinite intervals $(-\infty, z_1)$ and $(z_N, \infty)$ are sent to the edges connecting $x_{N+1}$ to $x_1$ and $x_N$, respectively. The interior angles $\alpha_j$ of $P$ at each of its vertices $x_j$, for $j =1, \ldots, N+1$, are given by
\begin{equation*}
\alpha_i = \frac{k_i + h^\vee}{h^\vee} \pi, \quad \text{for} \;\; i = 1, \ldots, N \qquad \text{and}\quad \alpha_{N+1} = - \frac{\sum_{i=1}^N k_i + h^\vee}{h^\vee} \pi.
\end{equation*}

Furthermore, the transformation $z \mapsto x$ maps the lower-half $\overline{\mathbb H} = \{ z \in \CC \,|\, \Im z \leq 0 \}$ of the $z$-plane to the reflection $P'$ in the $x$-plane of the polygonal domain $P$ through its edge connecting the vertices $x_1$ and $x_2$. The map $z \mapsto x$ in \eqref{SC mapping} therefore sends the Riemann sphere $\CP$, equipped with the global coordinate $z$ on the dense open subset $\CC \subset \CP$, to another copy of the Riemann sphere $\CP$, equipped with the global coordinate $x$ on an open and dense subset identified with the interior of the domain $P \cup P'$ in the $x$-plane. The case $N=2$ is depicted in Figure \ref{fig: polygonal region}. A very similar change of coordinate to this particular example was considered in \cite{Lukyanov:2013wra} in the context of the massive ODE/IM correspondence for the Fateev model.
\begin{figure}[h]
\raisebox{11mm}{\begin{tikzpicture}[scale=.6]
\filldraw (0,0) node [below right=-.5mm]{\scriptsize $z_1$} circle (2pt);
\filldraw (4,0) node [below left=-.5mm]{\scriptsize $z_2$} circle (2pt);
\draw[blue, -stealth', postaction={decorate,decoration={markings,mark=at position .4 with {\arrow{stealth'}}}}] (2,0) .. controls (-2,-2) and (-2,1.25) .. (2,1.25) node[above]{$\gamma$};
\draw[blue, postaction={decorate,decoration={markings,mark=at position .8 with {\arrow{stealth'}}}}] (2,1.25) .. controls (6,1.25) and (6,-2) .. (2,0);
\draw[blue, postaction={decorate,decoration={markings,mark=at position .8 with {\arrow{stealth'}}}}] (2,0) .. controls (-2,2) and (-3,-1.5) .. (2,-1.5);
\draw[blue] (2,-1.5) .. controls (7,-1.5) and (6,2) .. (2,0);
\draw[gray!30!black, decorate, decoration={snake, segment length=2mm, amplitude=.5mm}] (0,0) -- (-3,0);
\draw[gray!30!black, decorate, decoration={snake, segment length=2mm, amplitude=.5mm}] (4,0) -- (7,0);
\end{tikzpicture}}
\qquad\quad
\begin{tikzpicture}
\filldraw[fill=gray!20,draw=gray!50!black] (0,0) coordinate (a) node[anchor=east]{\scriptsize $x_1$}
  -- (2.5,1) coordinate (b) node[anchor=south]{\scriptsize $x_3$}
  -- (3,-1) coordinate (c) node[anchor=west]{\scriptsize $x_2$}
  -- (1.4, -2.3) coordinate (b2) node[anchor=north]{\scriptsize $x'_3$}
  -- cycle;
\draw[gray!50!black] (a) -- (c);
\draw[blue,postaction={decorate,decoration={markings,mark=at position .5 with {\arrow{stealth'}}}}]
				($.3*(b2)-.3*(c)+(c)$) -- ($.6*(b)$);
\draw[blue,postaction={decorate,decoration={markings,mark=at position .5 with {\arrow{stealth'}}}}]
				($.6*(b2)$) -- node[above]{$\gamma$} ($.7*(b2)-.7*(c)+(c)$);
\draw[blue,postaction={decorate,decoration={markings,mark=at position .3 with {\arrow{stealth'}}}}]
				($.7*(b)-.7*(c)+(c)$) -- ($.2*(b2)$);
\draw[blue,postaction={decorate,decoration={markings,mark=at position .2 with {\arrow{stealth'}}}}]
				($.2*(b)$) -- ($.3*(b)-.3*(c)+(c)$);
\draw[blue,dotted] ($.3*(b)-.3*(c)+(c)$) -- ($.3*(b2)-.3*(c)+(c)$);
\draw[blue,dotted] ($.7*(b)-.7*(c)+(c)$) -- ($.7*(b2)-.7*(c)+(c)$);
\draw[blue,dotted] ($.2*(b)$) -- ($.2*(b2)$);
\draw[blue,dotted] ($.6*(b)$) -- ($.6*(b2)$);
\node[scale=.6, rotate=20] at ($(a)!0.5!(b)$) {|};
\node[scale=.6, rotate=110] at ($(a)!0.5!(b2)$) {|};
\node[scale=.6, rotate=100] at ($(c)!0.5!(b)$) {||};
\node[scale=.6, rotate=40] at ($(c)!0.5!(b2)$) {||};
\draw[-stealth'] (2.7,-1.7) node[right]{\small $P'$} to [bend right=10] (1.5,-1);
\draw[-stealth'] (3.1,.4) node[right]{\small $P$} to [bend right=10] (2.2,0);
\end{tikzpicture}
\caption{A pochhammer contour $\gamma$ in the case of two marked points $z_1$, $z_2$ in the $z$-plane (left) and its image in the polygonal region $P \cup P'$ in the $x$-plane (right). The edge $x_i x_3$ is identified with $x_i x'_3$ for $i = 1, 2$.}
\label{fig: polygonal region}
\end{figure}
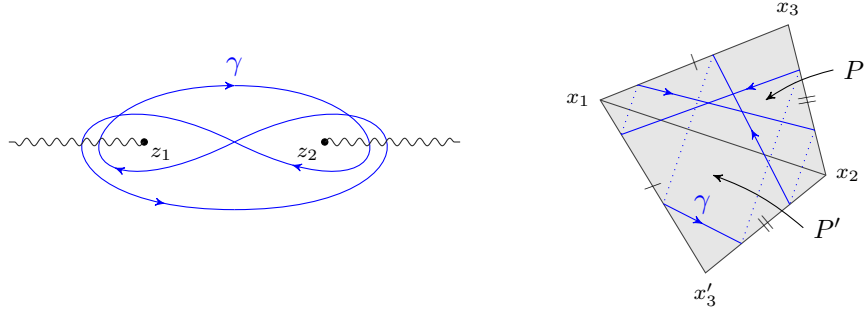

Coming back to the connection \eqref{Miura oper no rho}, the pullback of the meromorphic differential $\widetilde u(z) dz$ by the inverse transformation $\CP \rightarrow \CP$, $x \mapsto z$ gives a multivalued differential $\widehat u(x) dx$, where $\widehat u$ is the multivalued function on $\CP$ given in the interior of the domain $P \cup P'$ of the $x$-plane by $\widehat u(x) \coloneqq \P(z(x))^{-1/h^\vee} \widetilde u(z(x))$. Here we have used the fact that $dx/dz = \P(z)^{1/h^\vee}$.
Therefore \eqref{Miura oper no rho} can now be re-expressed as a multivalued Miura $\lgp/\CC\delta$-oper on $\CP$ which is meromorphic on $\CP \setminus \{ x_i \}_{i=1}^{N+1}$ and given in the interior of the polygonal domain $P \cup P'$ of the $x$-plane by
\begin{equation} \label{polygonal mKdV}
\widetilde \nabla = d + p_{-1} dx + \sum_{i=1}^\ell \widehat u_i(x) \alpha_i dx.
\end{equation}
Here we wrote $\widehat u(x) = \sum_{i=1}^\ell \widehat u_i(x) \alpha_i$ in the basis of simple roots $\{ \alpha_i \}_{i=1}^\ell$. Comparing the above expression \eqref{polygonal mKdV} with the connection \eqref{mkdv}, it is tempting to regard the $\widehat u_i(x)$ for $i = 1, \ldots, \ell$ as classical $\lg$-mKdV fields on the interior of the polygonal domain $P \cup P'$ in the $x$-plane.

Just as in \S\ref{sec: smooth DS}, one could also consider bringing the Miura $\lg$-oper \eqref{polygonal mKdV} to a form analogous to \eqref{KdV connection} in the smooth setting and define classical $\lg$-KdV fields $\widehat v_r(x)$, $r \in E$ on the interior of the polygonal domain $P \cup P'$ as the coefficients of the $\bar p_r$, $r \in E$, \emph{i.e.}
\begin{equation} \label{polygonal KdV}
d + p_{-1} dx + \sum_{r\in \bar E} \widehat v_r(x) \bar p_r dx.
\end{equation}
Suppose that the collection of Bethe roots $w^i_j$, $j = 1, \ldots, m_i$ for $i \in \{ 1, \ldots, \ell \}$ satisfy the Bethe equations \eqref{Bethe equations}, or more explicitly
\begin{equation*}
- \sum_{k=1}^N \frac{(\lambda_k|\alpha_{c(j)})}{w^i_j - z_k} + \sum_{\substack{k =1\\ k \neq j}}^m \frac{(\alpha_{c(k)}|\alpha_{c(j)})}{w^i_j - w^i_k} = 0, \qquad j= 1, \ldots, m_i,
\end{equation*}
for every $i \in \{ 1, \ldots, \ell \}$. Then it follows from Proposition \ref{prop: Miura oper BAE} and the explicit form \eqref{Miura rho explicit} of the connection $\nabla$ we started with in the $z$-plane, that the $\widehat v_r(x)$, $r \in \bar E$ are holomorphic at the images $x(w^i_j)$ of these Bethe roots under the Schwarz-Christoffel transformation \eqref{SC mapping}. However, each $\widehat v_r(x)$, $r \in \bar E$ will generically still be singular at the images $x(w^0_j)$, $j = 1, \ldots, m_0$, of the Bethe roots of ``colour'' 0. (By contrast, let us stress that there exists a quasi-canonical form of the affine $\lg$-oper in which \emph{all} Bethe roots are erased, as in Corollary \ref{cor: Bethe equations}.) 

Multivalued $\lg$-opers of the form \eqref{polygonal KdV} but with a certain irregular singularity were conjectured in \cite{FFsolitons} to describe the spectrum of quantum $\g$-KdV. Specifically, it was shown that in the case $\g = \widehat{\mathfrak{sl}}_2$ such $\lg$-opers are equivalent to the Schr\"odinger operators with `monster' potentials used to describe the spectrum of both local and non-local integrals of motion in quantum KdV theory via the ODE/IM correspondence \cite{MR1733841,BLZ4, BLZ5,DDT}. 

In the ODE/IM setting, it was recently shown in \cite{Frenkel:2016gxg} and \cite{MRV1, MRV2} that a certain $Q\widetilde{Q}$-system can be extracted from both sides of the `KdV-oper' correspondence proposed in \cite{FFsolitons}, providing strong evidence in support of the conjecture.
By contrast, the proposal of the present work is a direct approach to establishing a correspondence between the spectra of quantum Gaudin models of affine type and opers of Langlands-dual affine type. It relies on the idea that, in close parallel with the well-established correspondence in finite types,  the spectrum can be obtained from a (quasi-)canonical form of the (affine) oper.

\subsection{Classical limit of higher Gaudin Hamiltonians} \label{sec: classical lim}

One of the motivations for Conjectures \ref{conj: higher Ham} and \ref{conj: e-val op} comes from the structure of higher local integrals of motion in \emph{classical} affine Gaudin models \cite{V17}, constructed in \cite{LMV17} when the underlying finite-dimensional simple Lie algebra $\dot\g$, cf. \S\ref{sec: ao}, is of classical type.

Specifically, to every exponent $i \in E$ and every zero $x$ of the twist function, \emph{i.e.} an $x \in \CC$ such that $\varphi(x) = 0$, is assigned an element $Q^x_i$ of a certain completion $\hat S_{\bm k}(\g'^{\oplus N})$ of the quotient of the symmetric algebra of $\g'^{\oplus N}$ by the ideal generated by the elements $\cent^{(j)} - k_j$, $j = 1, \ldots, N$. These were obtained by generalising the approach of \cite{Evans:1999mj}, where certain $\dot\g$-invariant homogeneous
polynomials $P_i : \dot\g^{\times (i+1)} \to \CC$ of degree $i+1$ were constructed for each $i \in E$. Extending these to $\g'/\CC \cent \cong_\CC \mathcal L \dot\g$ as $P_i(x_m, \ldots, y_n) \coloneqq P_i(x, \ldots, y) \delta_{m + \ldots + n, 0}$ for any $x, \ldots, y \in \dot\g$ and $m, \ldots, n \in \ZZ$, they can be applied to the first tensor factor of the local Lax matrix \eqref{local Lax matrix}. The resulting $\hat S_{\bm k}(\g'^{\oplus N})$-valued meromorphic functions on $\CP$ are then evaluated at any zero $x$ of the twist function to produce the charges $Q^x_i$, $i \in E$. The collection of these charges was shown in \cite{LMV17} to form a Poisson commutative subalgebra of $\hat S_{\bm k}(\g'^{\oplus N})$, \emph{i.e.} $\{ Q^x_i, Q^{x'}_j \} = 0$ for every $i, j \in E$ and any pair of zeroes $x, x'$ of the twist function.

We also expect the operators $\mathcal S_j(z)$, $j \in E$ in Conjecture \ref{conj: higher Ham} to be built from the local Lax matrix $L(z)$ in a similar way. Furthermore, reintroducing Planck's constant $\hbar$ to take the classical limit, we expect the dependence of the integral \eqref{integrated operators} on $\hbar$ to come in the form
\begin{equation*}
\int_{\gamma} \P(z)^{-i / \hbar h^\vee} \mathcal S_i(z) dz.
\end{equation*}
In the classical limit $\hbar \to 0$ such an integral localises, by the steepest descent method, at the critical points of the function $\P(z)$. Yet these are nothing but the zeroes of the twist function $\varphi(z) = \partial_z \log \P(z)$. Moreover, for generic $z_i$, $i = 1, \ldots, N$ the number of zeroes of the twist function is $N-1$, which coincides with the number of linearly independent cycles from Corollary \ref{cor: opint}.
We thus expect that the higher affine Gaudin Hamiltonians $\hat Q^\gamma_i$ of Conjecture \ref{conj: higher Ham} provide a quantisation of the local integrals of motion $Q^x_i$ for classical affine Gaudin models in \cite{LMV17}.

Strictly speaking, the classical field theories correspond to classical affine Gaudin models with reality conditions and various other generalizations \cite{V17}. To understand the corresponding quantum field theories in the present framework, one would need to extend the constructions above to, in particular, the cyclotomic case and the case of irregular singularities. In finite types, such generalizations of quantum Gaudin models were studied in \cite{VY16,VY17a} and \cite{FFT, VY17} respectively.  

\subsection{Two-point case} One natural arena in which to test the conjectures in \S\ref{sec: conjectures} is the special case of $N=2$ marked points and $\g'=  \widehat{\mathfrak{sl}}_2$. As was noted in \cite[\S6.4]{FFsolitons}, in that case the GKO coset construction \cite{GKO} means that one already has candidates for the higher affine Gaudin Hamiltonians, namely the Integrals of Motion of quantum KdV acting in multiplicity spaces. With that in mind, it is interesting to note that Bethe equations of a two-point Gaudin model of type $\widehat{\mathfrak{sl}}_2$ appeared in \cite[\S7.3]{FJM} as limits of Bethe equations for quantum toroidal algebras.

\appendix
\section{Hyperplane arrangements and quadratic Hamiltonians} \label{sec: hyp arr}

\subsection{The Aomoto complex}
Fix coordinates $z_1,\dots,z_N,w_1,\dots,w_m$ on $\CC^N \times \CC^m$. 
Let $\cc_{N+m}$ denote the hyperplane arrangement in $\CC^N\times \CC^m$ consisting of the following affine hyperplanes:
\begin{alignat}{2}
H_{ij} &: w_i -  w_j = 0 ,\quad &&1\leq i<j \leq m, \nn\\
H_i^j  &: w_i -  z_j = 0, \quad &&1\leq i\leq m,\quad 1\leq j \leq N,\nn\\
H^{ij} &: z_i - z_j = 0  ,\quad &&1\leq i<j \leq N. \nn\end{alignat}

A \emph{weighting} of the hyperplane arrangement $\cc_{N+m}$ means a map $a: \cc_{N+m} \to \CC$, \emph{i.e.} an assignment to every hyperplane $H$ of a number $a(H)\in \CC$ called its \emph{weight}. Suppose we fix such a weighting. The corresponding \emph{master function} is by definition 
\be \Phi \coloneqq \sum_{H\in \cc_{N+m}} a(H) \log l_H ,\nn\ee
where $l_H=0$ is an affine equation for the hyperplane $H$. It is a multivalued function on the complement of the arrangement,
\be U(\cc_{N+m}) \coloneqq (\CC^N \times \CC^m) \setminus \cc_{N+m}. \nn\ee
Let $\mathscr L$ denote the trivial line bundle, over this complement $U(\cc_{N+m})$,  equipped with the flat connection given by
\be d_\Phi f = df + (d\Phi) f ,\nn\ee 
where $f$ is any holomorphic function on $U(\cc_{N+m})$  and $d$ is the de Rham differential. Let $\Omega^\bl(\mathscr L)$ denote the complex of $U(\cc_{N+m})$-sections of the holomorphic de Rham complex of $\mathscr L$. That means, $\Omega^\bl(\mathscr L)$ is isomorphic as a vector space to the usual de Rham complex of holomorphic forms on $U(\cc_{N+m})$, but with the differential 
\be d_\Phi x = dx + d\Phi \wx x .\nn\ee

The \emph{Orlik-Solomon algebra} \cite{OS} $\A^\bl= \bigoplus_{p=0}^m \A^p$ can be defined as the $\CC$-algebra of differential forms generated by $1$ and the one-forms 
\be d\log l_H= d l_H /l_H, \qquad H\in \cc_{N+m}.\nn\ee 
Notice that $dx = 0$ for all $x\in \A^\bl$.
We have $d_\Phi(\A^k) \subset  \A^{k+1}$. The complex $(\A^\bl,d_\Phi)$ is called the \emph{Aomoto complex} or the \emph{complex of hypergeometric differential forms of weight $a$}.
There is an inclusion of complexes
\be (\A^\bl, d_\Phi) \longhookrightarrow (\Omega^\bl(\mathscr L), d_\Phi) .\nn\ee

\subsection{Kac-Moody data}
Let $\g$ be any Kac-Moody Lie algebra with symmetrizable Cartan matrix of size $r$. Let $B$ be the (non-extended) symmetrized Cartan matrix of $\g$. A realization of this Kac-Moody Lie algebra involves the choice of the following data:
\begin{enumerate}[(1)]
\item A finite-dimensional complex vector space $\h$;
\item A non-degenerate symmetric bilinear form $(\cdot,\cdot): \h\times \h\to \CC$;
\item A collection $\alpha_1,\dots,\alpha_\ru\in \h^*$ of linearly independent elements (the \emph{simple roots}) such that $B= \left( (\alpha_i,\alpha_j)\right)_{i,j = 1}^r$.
\end{enumerate}
Let $\tilde\g$ denote ``the Kac-Moody algebra $\g$ but without Serre relations'' -- see \cite[Section 6]{SV} for the precise definition. One has $\tilde \g = \tilde\n_- \oplus \h \oplus \tilde\n_+$, where $\tilde\n_-$ and $\tilde\n_+$ are free Lie algebras in generators $F_i$ and $E_i$, $i=1,\dots,\ru$, respectively.
For any weight $\lambda\in \h^*$ let $M_\lambda$ denote the Verma module over $\tilde \g$ of highest weight $\lambda$ and let $M_{\lambda}^*$ denote its contragredient dual. The \emph{Shapovalov form} is a certain bilinear form defined on $\tilde \g$ and on each weight subspace of $M_{\lambda}$. The quotient $\tilde \g/ \ker S$ is the Kac-Moody algebra $\g$. The quotient $L_\lambda \cong M_\lambda/\ker S$ is the irreducible $\g$ module of highest weight $\lambda$. 
We can regard the Shapovalov form $S$ on $M_\lambda$ as a map $M_\lambda \to M_\lambda^*$. Then we can identify $L_\lambda$ with the image of $M_\lambda$ in $M_\lambda^*$, \emph{i.e.} 
\be L_\lambda = S(M_{\lambda}) \subset M_{\lambda}^*.\nn\ee   

Now fix weights $\lambda_1,\dots,\lambda_N \in \h^*$ and a tuple $(k_1,\dots,k_r) \in \mathbb Z_{\geq 0}^r$ and consider the subspace 
\be \left(\bigotimes_{i=1}^N M^*_{\lambda_i} \right)_{\lambda_\8} \nn\ee
of weight 
\be \lambda_\8 \coloneqq \sum_{i=1}^N \lambda_i - \sum_{j=1}^r k_j \alpha_j.\nn\ee
Set $m = k_1 + \dots + k_r$ and consider the arrangement $\cc_{N+m}$ as above. We assign ``colours'' $1,\dots,\ru$ to the coordinates $w_1,\dots,w_m$ in such a way that $k_i$ of them have colour $i$, for each $i$. Pick any $\kappa\in\Cx$. The inner products amongst the weights $\lambda_i$ and roots $\alpha_i$ defines a weighting of the arrangement $\cc_{N+m}$. Namely:
\begin{equation} \label{weighting}
a(H^{ij}) = (\lambda_i,\lambda_j)/\kappa, \quad
a(H^i_j) = -(\lambda_i,\alpha_{c(j)})/\kappa, \quad
a(H_{ij}) = (\alpha_{c(i)},\alpha_{c(j)})/\kappa,
\end{equation}
where $c(i)$ is the colour of the coordinate $w_i$ for each $i=1, \ldots, m$, cf. \eqref{Master function}.

\subsection{Eigenvectors of the quadratic Gaudin Hamiltonians}
In \cite[Section 7]{SV} Schechtman and Varchenko define a map 
\be \tilde\eta : \left(\bigotimes_{i=1}^N M^*_{\lambda_i} \right)_{\lambda_\8} \longrightarrow  \Omega^m( \mathscr L) \nn\ee
and prove that it obeys
\be d_\Phi \tilde\eta(x) = \frac 1 \kappa \sum_{i<j} \tilde\eta( \Omega_{ij} x) \wx d \log(z_i-z_j) .\label{ki}\ee
Here $\Omega_{ij}$ are certain endomorphisms of $M_{\lambda_i}^* \ox M_{\lambda_j}^*$ whose definition can be found in \cite[Section 7]{SV}. Their important property here is that on the subspace $L_{\lambda_i} \ox L_{\lambda_j}$, $\Omega_{ij}$ coincides with the action of the element $\Xi$ of the Kac-Moody algebra $\g$. (Recall, the element $\Xi$ is the element of the formal sum $\sum_{\alpha} \g_{\alpha} \ox \g_{-\alpha}$ over root spaces, defined by the standard bilinear form on $\g$ of \cite[Chapter 2]{KacBook}.)

The statement \eqref{ki} is Theorem 7.2.5$''$ in \cite{SV}. Note that the differential $d \tilde \eta$ in the statement of Theorem 7.2.5$''$ is the differential in $\Omega^\bl (\mathscr L)$, \emph{i.e.} $d_\Phi\eta$ in our notation. (This is stated explicitly in Theorem 7.2.5$'$.) 

Let 
\be \Theta \in \left(\bigotimes_{i=1}^N M_{\lambda_i} \right)_{\lambda_\8} \ox 
\left(\bigotimes_{i=1}^N M^*_{\lambda_i} \right)_{\lambda_\8}\nn\ee
be the canonical element. Consider the image of $\Theta$ under the map $S \ox \tilde \eta$. The result is an element
\be \Psi \coloneqq (S \ox \tilde \eta)\Theta \in \left(\bigotimes_{i=1}^N L_{\lambda_i} \right)_{\lambda_\8} 
\ox \Omega^m( \mathscr L) \nn\ee
which obeys
\be d_\Phi \Psi = \Bigg(\frac{1}{\kappa} \sum_{i<j} \Xi_{ij} \,d\log(z_i-z_j)\Bigg) \wx \Psi.\label{kii}\ee 
We have the trivial fibre bundle $\pi:\CC^N \times \CC^m\onto \CC^N$ given by projecting along $\CC^m$. 
Now $\Psi$ can be uniquely written in the form
\be \Psi = \psi dw_1\wx \dots \wx dw_m + \psi' ,\nn\ee
where $\psi$ is a $\big(\!\bigotimes_{i=1}^N L_{\lambda_i} \big)_{\lambda_\8}$-valued holomorphic function on the complement $U(\cc_{N+m})$ of the arrangement and where $\psi'$ is a $\big(\!\bigotimes_{i=1}^N L_{\lambda_i} \big)_{\lambda_\8}$-valued holomorphic $m$-form on $U(\cc_{N+m})$ whose pull-back to each fibre vanishes. (That is, informally, $\psi'$ has factors of $dz_i$.) 

The function $\psi$ is the \emph{weight function}, or \emph{Schechtman-Varchenko vector}. 
Now the image of $\tilde\eta$ is in fact in the subspace $\A^m\into  \Omega^m( \mathscr L)$ of hypergeometric forms and these are closed as we noted above. So $d \Psi  = 0$ and hence $d_\Phi \Psi = d\Phi \wx \Psi$. Therefore the identity \eqref{kii} becomes
\begin{equation}
\sum_{i=1}^m \frac{\del \Phi}{\del w_i}  dw_i \wx \psi' + 
\sum_{i=1}^N \frac{\del \Phi}{\del z_i}  dz_i \wx \Psi =  \frac{1}{\kappa}\sum_{i=1}^N \sum_{j\neq i} \frac{\Xi_{ij}}{z_i-z_j} dz_i \wx \Psi .\label{kiii}
\end{equation}
Assume that
\be \frac{\del \Phi}{\del w_i} = 0, \quad\text{for}\quad i=1,\dots,m,\label{critpt}\ee
\emph{i.e.} assume we are at a critical point of the pull-back of the master function to the fibre. On inspecting the weighting of the arrangement in \eqref{weighting}, one sees that equations \eqref{critpt} are the Bethe ansatz equations. 
Now take the inner derivative $\neg \del/ \del z_i$ of \eqref{kiii} and then pull back to the fibre. (That is, consider the coefficient of $dz_i \wx dw_1 \wx \dots \wx dw_m$.) The resulting equality of top-degree forms on the fibre gives
\be \kappa \frac{\del \Phi}{\del z_i}  \psi 
 =  \sum_{j\neq i} \frac{\Xi_{ij}}{z_i-z_j} \psi.\label{evalue eq}\ee
In other words, $\psi$ is an eigenvector of the quadratic Gaudin Hamiltonians, with the eigenvalue $\kappa \frac{\del \Phi}{\del z_i}$. 
It is known that if the critial point \eqref{critpt} is isolated then this eigenvector $\psi$ is nonzero. Indeed, this follows from \cite[Theorem 9.13]{V11}; see, for example, the proof of \cite[Theorem 9.17]{VYhyperplanes}.

\subsection{KZ solutions} 
A related but slightly less direct way to derive the Bethe ansatz for the quadratic Gaudin Hamilitonians starting with \eqref{kii} is to go via asymptotics of solutions of the KZ equations in terms of hypergeometric integrals, as in \cite{RV}. For comparision, and also because hypergeometric integrals appear in a quite different role in the present paper, we now briefly recall this method. 
Let us re-write \eqref{kiii} but now include the term $d \Psi$ (which as we noted above is actually zero). We obtain
\begin{equation*}
\sum_{i=1}^N \frac{\partial \psi}{\partial z_i} dz_i \wx d\bm w + d \psi' + \sum_{i=1}^m \frac{\del \Phi}{\del w_i}  dw_i \wx \psi' + 
    \sum_{i=1}^N \frac{\del \Phi}{\del z_i}  dz_i \wx \Psi
=  \frac{1}{\kappa}\sum_{i=1}^N \sum_{j\neq i} \frac{\Xi_{ij}}{z_i-z_j} dz_i \wx \Psi,
\end{equation*}
where $d\bm w \coloneqq dw_1 \wx \ldots \wx dw_m$.
Now upon taking the inner derivative $\neg \del/\del z_i$ of this equation and pulling back to the fibres of the trivial fibre bundle $\pi:\CC^N \times \CC^m\onto \CC^N$ we obtain the equation
\begin{equation*}
\frac{\partial \psi}{\partial z_i} d\bm w + \frac{\del \Phi}{\del z_i} \psi d\bm w -  \sum_{j=1}^m dw_j \wx \bigg( \frac{\del}{\del w_j} \chi_i +  \frac{\del \Phi}{\del w_j} \chi_i \bigg) =  \frac{1}{\kappa} \sum_{j\neq i} \frac{\Xi_{ij}}{z_i-z_j} \psi d\bm w,
\end{equation*}
where $\chi_i$ denotes the pullback to the fibre of the inner derivative $\neg \del/ \del z_i$ applied to $\psi'$. Next, we multiply this last equation through by the multivalued function $e^{\Phi}$ and integrate over an $m$-dimensional twisted cycle $\Gamma$ in the fibre, along which this function is single-valued. Since the bracketed sum on the left-hand side is the twisted derivative of the function $\chi_i$, it vanishes once we take the integral over $\Gamma$. We therefore end up with
\begin{equation*}
\kappa \frac{\partial}{\partial z_i} \int_\Gamma e^\Phi \psi d\bm w = \sum_{j\neq i} \frac{\Xi_{ij}}{z_i-z_j} \int_\Gamma e^\Phi \psi d\bm w,
\end{equation*}
which expresses the fact that the integral $\int_\Gamma e^\Phi \psi d\bm t$ is a solution of the KZ equation. The eigenvalue equation \eqref{evalue eq} can be recovered in the limit $\kappa \to 0$ by applying the method of steepest descent to this integral solution, which localises in this limit to the isolated zeroes of the Bethe equations \eqref{critpt}. This is to be contrasted with the discussion of the classical limit of the higher affine Gaudin Hamiltonians in \S\ref{sec: classical lim}.

\providecommand{\bysame}{\leavevmode\hbox to3em{\hrulefill}\thinspace}
\providecommand{\MR}{\relax\ifhmode\unskip\space\fi MR }
\providecommand{\MRhref}[2]{%
  \href{http://www.ams.org/mathscinet-getitem?mr=#1}{#2}
}
\providecommand{\href}[2]{#2}

\end{document}